\documentclass[reqno, 12pt, reqno]{amsart}

\usepackage{fullpage}
\usepackage[english]{babel}
\usepackage[T1]{fontenc}
\usepackage{graphicx}
\usepackage{mathtools}

\usepackage{etex}
\usepackage[utf8x]{inputenc}
\usepackage{amsmath,amssymb,amsfonts,amsthm}
\usepackage{geometry}
\geometry{margin=1in}

\theoremstyle{plain}
\newtheorem{thm}{Theorem}[section]
\newtheorem{cor}[thm]{Corollary}
\newtheorem{lem}[thm]{Lemma}
\newtheorem{prop}[thm]{Proposition}
\newtheorem*{proposition*}{Proposition $2.4$}
\newtheorem{rem}{Remark}

\theoremstyle{definition}
\newtheorem{defin}[thm]{Definition}
\newtheorem{esem}[thm]{Example}

\DeclareMathOperator{\Gal}{Gal}

\DeclareMathOperator{\bfu}{\mathbf{u}}

\DeclareMathOperator{\bfz}{\mathbf{z}}
\DeclareMathOperator{\bfv}{\mathbf{v}}

\DeclareMathOperator{\rank}{rank}
\DeclareMathOperator{\sing}{sing}
\DeclareMathOperator{\Swan}{Swan}

\DeclareMathOperator{\Emb}{Emb}

\DeclareMathOperator{\Id}{Id}
\DeclareMathOperator{\Fr}{Fr}

\DeclareMathOperator{\Tr}{Tr}

\theoremstyle{remark}

\begin{document}
\title{On the 2-torsion in class groups of number fields}
\author{Dante Bonolis}
\address{TU Graz, Styregasse 30, Graz, Austria 8010}
\email{dante.bonolis@tugraz.at}

\begin{abstract}
  In $2020$, Bhargava, Shankar, Taniguchi, Thorne, Tsimerman, and Zhao proved that for a finite extension $K/\mathbb{Q}$ of degree $n\geq 5$, the size of the $2$-torsion class group is bounded by $\# h_{2}(K)=O_{n,\varepsilon}(D_{K}^{\frac{1}{2}-\frac{1}{2n}+\varepsilon})$, where $D_{K}$ is the absolute discriminant of $K$. In the present paper, we improve their bound by proving that $\# h_{2}(K)=O_{n,\varepsilon}(D_{K}^{\frac{1}{2}-\frac{1}{2n}-\delta_{K}+\varepsilon})$, for a constant $\delta_{K}\geq\frac{1}{28n}-\frac{3}{28n(n-1)}$.
\end{abstract}
\date{\today}
\maketitle
\section{Introduction}
For a given number field $K/\mathbb{Q}$ of degree $n$ and for any $\ell$ prime, a classical problem in algebraic number theory is to bound the size of $h_{\ell}(K)$, the $\ell$-torsion subgroup of the class group $h(K)$. Since $\#h_\ell(K) \leq \#h(K)$, from the Brauer-Siegel Theorem (see \cite[Chapter XVI]{Lan94}) one deduces that
\[
\#h_\ell(K) = O_{n,\varepsilon} (D_K^{1/2}(\log D_{K})^{n-1}),
\]
where $D_K$ denotes the absolute value of the discriminant of $K$ over $\mathbb{Q}$. Recently, Lemke Oliver and Zaman (\cite[Corollary $1$]{LZ25}) showed that for every $\ell$, one has $\#h_{\ell}(K)=o(D_{K}^{\frac{1}{2}})$ (and where the saving is an explicit power of $\log(D_{K})$). On the other hand, it is conjectured that
\begin{equation}
\#h_\ell(K) = O_{n,\varepsilon}(D_{K}^{\varepsilon}),
\label{conj : ell}
\end{equation}
for any $\varepsilon > 0$. This conjecture remains widely open; the only case currently known is $(n, \ell) = (2, 2)$, which follows from Gauss' genus theory. In fact, even establishing the existence of a constant $\delta_{n,\ell} > 0$ such that for any number field $K$ of degree $n$, we have
\[
\#h_\ell(K) \ll D_{K}^{\frac{1}{2}-\delta_{n,\ell}+\varepsilon}
\]
It is already a challenging problem. In this paper, we focus on the case $\ell=2$. It was shown in \cite[Theorem 1.1]{BSTTZ20} that one may take
\[
\delta_{n,2} =
\begin{cases}
    0.2874\ldots & \text{if } n = 3,4, \\
    \frac{1}{2n} & \text{if } n \geq 5.
\end{cases}
\]
Our main result is the following:

\begin{thm}
Let $K/\mathbb{Q}$ be a number field of degree $n\geq 5$. Then, there exists $\delta_{K}>0$ such that for any $\varepsilon >0$, one has
\[
\#h_{2}(K)=O_{n,\varepsilon}(D_{K}^{\frac{1}{2}-\frac{1}{2n}-\delta_{K}+\varepsilon}).
\]
Moreover we have that $\delta_{K}\geq \frac{1}{28n}-\frac{3}{28n(n-1)}$.
\label{thm : main}
\end{thm}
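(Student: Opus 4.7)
The starting point is the reduction used by Bhargava--Shankar--Taniguchi--Thorne--Tsimerman--Zhao: for each $[c]\in h_{2}(K)$, Minkowski's theorem produces an integral representative $I\in c$ of norm $\ll_{n} D_{K}^{1/2}$, and since $c^{2}=1$ we may write $I^{2}=(\alpha)$ with $\alpha\in\mathcal{O}_{K}$ determined modulo $\mathcal{O}_{K}^{\times}$. Up to multiplication by units, $\alpha$ can be placed in a Minkowski-type box of sidelength $\asymp N(I)^{2/n}$ in $K_{\infty}=K\otimes_{\mathbb{Q}}\mathbb{R}$. One then fixes a threshold $Y$ and separates classes by whether a minimal representative has $N(I)>Y$ or $N(I)\leq Y$; the first range is controlled by the number of integral ideals of bounded norm, and the second by the lattice-point count in the box subject to the constraint that the principal ideal generated is a square. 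Balancing these two terms at $Y\asymp D_{K}^{(n-1)/(2n)}$ recovers the BSTTZ exponent $\tfrac{1}{2}-\tfrac{1}{2n}$.

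The novelty I would pursue is a power-saving refinement of the second count, i.e.\ an estimate of the form
\[
N_{\square}(Y)=\#\bigl\{\alpha\in\mathcal{O}_{K}\cap\mathcal{B}(Y):\ (\alpha)=\mathfrak{a}^{2}\text{ for some ideal }\mathfrak{a}\bigr\}\ll_{n,\varepsilon}\frac{Y^{2}}{D_{K}^{1/2}}\cdot D_{K}^{-\eta+\varepsilon},
\]
for an explicit $\eta=\eta(n)>0$. The strategy is to detect the ``square'' condition at a family of small rational primes $p$ by orthogonality: the density in $\mathcal{O}_{K}/p\mathcal{O}_{K}$ of $\alpha$ whose associated ideal is a square is quantifiably smaller than one. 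After inserting characters of order dividing $2$ attached to the residue rings and applying Poisson summation on the lattice $\mathcal{O}_{K}$, the error term reduces to sums of twisted exponential sums of Kloosterman/quadratic type, indexed by the decomposition of $p$ in $K$; these admit square-root cancellation by Deligne's form of the Weil conjectures. Summing the individual savings over an initial range of unramified primes, while tracking Swan-conductor contributions at ramified ones, produces the overall gain $D_{K}^{-\eta}$.

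With the refined count in hand, one re-optimizes $Y$. The new balance sets $Y\asymp D_{K}^{(n-1)/(2n)-\delta_{K}}$ and propagates the exponential-sum saving into the stated bound $\delta_{K}\geq\frac{1}{28n}-\frac{3}{28n(n-1)}$; the factor $28$ should trace to the degree and codimension of the affine variety cut out by the square-ideal condition together with the dimension of the underlying Poisson transform on $K_{\infty}$.

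The principal difficulty is guaranteeing Weil-type bounds uniformly in the box location/shape, in the small ideal $\mathfrak{a}$, and in the choice of auxiliary prime $p$. The geometry of the ``square ideal'' locus must be analyzed over $\mathbb{F}_{p}$, its singularities and Swan conductors bounded independently of $p$, and the estimates must remain effective at primes that ramify in $K$. Keeping all these contributions under control while preserving the explicit dependence on $n$ in $\delta_{K}$ is the crux of the argument; any loss in the conductor bookkeeping or in the uniformity over $\mathfrak{a}$ would immediately degrade the exponent $\tfrac{1}{28n}$.
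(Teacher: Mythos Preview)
Your plan is too schematic to be a proof, and it diverges from the paper at precisely the points where the argument has content. The paper does not rebalance a threshold $Y$; it keeps the same lattice-point count as in BSTTZ but improves the counting itself by leaving \emph{two} coordinates free rather than one. Concretely, for fixed $\bfz=(z_{3},\dots,z_{n})$ one studies how often $F_{\bfz}(X_{1},X_{2})=N_{K}(X_{1}+X_{2}\alpha_{2,K}+z_{3}\alpha_{3,K}+\cdots+z_{n}\alpha_{n,K})$ is a square on the rectangle $[-D_{K}^{1/n},D_{K}^{1/n}]\times[-D_{K}^{1/n-\delta_{2,K}},D_{K}^{1/n-\delta_{2,K}}]$, and the answer is obtained with Heath-Brown's square sieve plus Poisson, not with a generic ``density mod $p$'' heuristic.

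The decisive parameter you omit entirely is $\delta_{2,K}$, defined by $\|\alpha_{2,K}\|=D_{K}^{\delta_{2,K}}$ for the second vector in a Minkowski-reduced basis of $\mathcal{O}_{K}$. The saving one extracts is a function of $\delta_{2,K}$, and the analysis bifurcates: when $\delta_{2,K}\le \tfrac{1}{2n}$ the classical square sieve already gives $\delta_{K}=\tfrac{1}{6n}-\tfrac{\delta_{2,K}}{3}$ (Theorem~\ref{thm : deltalarge}); when $\delta_{2,K}$ is large (so the $X_{2}$-range dips below the square-root threshold of the $X_{1}$-range) one must combine the square sieve for composite moduli with the $q$-analogue of van der Corput to get $\delta_{K}=\tfrac{1}{7n}-\tfrac{3\delta_{2,K}}{14}$ (Theorem~\ref{thm : deltasmall}). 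The key exponential-sum input is not a generic Deligne bound but the stratification statement of Proposition~\ref{prop : Anumber}, which controls the correlation sum arising after van der Corput differencing. Your sketch has no analogue of this case split, no mechanism for the hard regime $\delta_{2,K}\in[\tfrac{1}{2n},\tfrac{1}{2(n-1)}]$, and no stratification result; a direct sieve without the van der Corput step simply fails there.

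Finally, the constant $\tfrac{1}{28n}-\tfrac{3}{28n(n-1)}$ is not geometric in the way you suggest. It is just $\min_{\delta_{2,K}}\max\{\tfrac{1}{6n}-\tfrac{\delta_{2,K}}{3},\ \tfrac{1}{7n}-\tfrac{3\delta_{2,K}}{14}\}$, attained at the extremal value $\delta_{2,K}=\tfrac{1}{2(n-1)}+o(1)$; the $7$ comes from the exponent optimization $V^{7}\asymp D_{K}^{\cdots}$ in the composite-moduli sieve, and the $28$ from combining it with the $\delta_{2,K}$-dependence. Your explanation in terms of ``degree and codimension of the affine variety'' does not correspond to anything in the actual computation.
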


\section{Outline of the proof}
We start by fixing some notation: let $\mathcal{O}_{K}$ be the ring of integers of $K$. It is a well-known fact that $\mathcal{O}_{K}$ is a $\mathbb{Z}$-lattice of rank $n$. Moreover, we have $\text{Disc}(\mathcal{O}_{K}) = D_{K}^{\frac{1}{2}}$. For every $\alpha \in \mathcal{O}_{K}$, consider the norm:
\[
\|\alpha\|=\sum_{\sigma\in \Emb(K/\mathbb{Q})}|\sigma(\alpha)|,
\]
where $\Emb(K/\mathbb{Q})$ denotes the set of embeddings of $K$ into $\mathbb{C}$ fixing $\mathbb{Q}$. Since $\mathcal{O}_{K}$ is a lattice, we can find a basis $\{\alpha_{1,K}=1, \alpha_{2,K}, \ldots, \alpha_{n,K}\}$ which is Minkowski-reduced, i.e. such that $1 \leq \|\alpha_{2,K}\|\leq\cdots\leq \|\alpha_{n,K}\|$, and for each $i \leq n-1$, $\|\alpha_{i,K}\|\ll \|v\|$ for every $v\notin\text{Span}\{1, \alpha_{2,K},\cdots ,\alpha_{i-1,K} \}$. Observe that this implies that
\begin{equation}
\|\alpha_{2,K}\|\cdots\|\alpha_{n,K}\|\asymp D_{K}^{\frac{1}{2}}.
\label{eq : succmindisc}
\end{equation}
Finally, for each $i=2,...,n$, we denote by $\delta_{i,K}$ 
 the real positive number such that $\|\alpha_{i,K}\|= D_{K}^{\delta_{i,K}}$. At this point, it is useful to record the following
\begin{prop}
    Let $K/\mathbb{Q}$ be a finite extension of degree $n$. Then
    \begin{itemize}
        \item[$i)$] $\|\alpha_{n,K}\|=O_{n}(D_{K}^{\frac{1}{n}})$. In particular we have $0\leq \delta_{i,K}\leq \frac{1}{n}+o(1)$, for each $2\leq i\leq n$;
        \item[$ii)$] $\|\alpha_{2,K}\|=O_{n}(D_{K}^{\frac{1}{2(n-1)}})$. In particular we have $0\leq \delta_{2,K}\leq \frac{1}{2(n-1)}+o(1)$.
    \end{itemize}
\end{prop}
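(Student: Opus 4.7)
For part $(ii)$, I would apply Minkowski's first convex-body theorem to the rank-$(n-1)$ quotient lattice $\mathcal{O}_{K}/\ZZ$, whose covolume with respect to the induced $\ell^{1}$-norm is $\asymp_{n} D_{K}^{1/2}$ (the covolume of $\mathcal{O}_{K}$ divided by the length of $1$ in the direction perpendicular to the quotient). Minkowski's theorem then produces a nonzero coset representative of norm $\ll_{n} D_{K}^{1/(2(n-1))}$, which lifts to some $\alpha\in\mathcal{O}_{K}\setminus\ZZ$ of the same order by subtracting its nearest rational integer. The Minkowski-reduction of the basis forces $\|\alpha_{2,K}\|\leq\|\alpha\|$, giving the claim.

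For part $(i)$, the Minkowski second theorem alone only yields $\prod_{i=2}^{n}\|\alpha_{i,K}\|\asymp_{n} D_{K}^{1/2}$, which is insufficient; the argument must exploit the ring structure of $\mathcal{O}_{K}$. The crucial additional input is the submultiplicativity $\|\alpha\beta\|\leq\|\alpha\|\|\beta\|$, a consequence of Cauchy--Schwarz on the embeddings. Writing $L_{k}:=\mathrm{Span}_{\mathbb{Q}}(1,\alpha_{2,K},\ldots,\alpha_{k,K})$, I would argue inductively in $k$: unless $L_{k}$ is a proper subfield of $K$ (which forces $k\mid n$), some product $\alpha_{i,K}\alpha_{j,K}$ with $i,j\leq k$ lies outside $L_{k}$ because $L_{k}$ cannot be closed under multiplication, and the Minkowski-reduction property gives $\|\alpha_{k+1,K}\|\ll\|\alpha_{i,K}\|\|\alpha_{j,K}\|$. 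Chaining these inequalities with a clever choice of $(i,j)$ at each step (typically balanced indices with $i+j\approx k+2$), together with the product identity above and the ordering $\|\alpha_{2,K}\|\leq\cdots\leq\|\alpha_{n,K}\|$, forces $\|\alpha_{n,K}\|\ll_{n} D_{K}^{1/n}$. In the subfield cases where the product trick degenerates, one substitutes a Minkowski estimate on the quotient $\mathcal{O}_{K}/(\mathcal{O}_{K}\cap L_{k})$, which has rank $n-k$ and covolume bounded in terms of $D_{K}^{1/2}$ and $D_{L_{k}}^{1/2}$.

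The main obstacle is ensuring the numerics in $(i)$ are tight: the extremal family $K=\mathbb{Q}(\sqrt[n]{d})$ saturates the bound $D_{K}^{1/n}$, so only the full system of product inequalities coming from all admissible pairs $(i,j)$ can pin down the correct exponent. Using only the single weak estimate $\|\alpha_{k+1,K}\|\ll\|\alpha_{2,K}\|\|\alpha_{k,K}\|$ produces only $D_{K}^{(n-1)/(2(2n-3))}$, which for $n\geq 5$ is strictly weaker than $D_{K}^{1/n}$; one really needs the balanced estimates such as $\|\alpha_{k+1,K}\|\ll\|\alpha_{\lceil (k+2)/2\rceil,K}\|\|\alpha_{\lfloor (k+2)/2\rfloor,K}\|$. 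Parallel bookkeeping is also required to handle proper subfields uniformly, which is the most delicate part of the argument.
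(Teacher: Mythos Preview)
Your plan is sound in spirit, but for both parts you are working much harder than the paper does.

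For $(ii)$, the paper's argument is a one-liner: the relation $\|\alpha_{2,K}\|\cdots\|\alpha_{n,K}\|\asymp_{n} D_{K}^{1/2}$ (already recorded as $(\ref{eq : succmindisc})$) together with the ordering $\|\alpha_{2,K}\|\leq\cdots\leq\|\alpha_{n,K}\|$ immediately gives $\|\alpha_{2,K}\|^{n-1}\ll_{n} D_{K}^{1/2}$. Your Minkowski argument on the quotient lattice $\mathcal{O}_{K}/\ZZ$ would also work, but it reproves the successive-minima product relation rather than using it.

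For $(i)$, the paper does not prove the bound at all: it simply cites \cite[Theorem~1.6]{BSTTZ20}. Your sketch---exploiting the ring structure via submultiplicativity $\|\alpha\beta\|\leq\|\alpha\|\|\beta\|$ and pushing products $\alpha_{i,K}\alpha_{j,K}$ outside $L_{k}$ when $L_{k}$ is not a subfield---is exactly the strategy used in that reference, so you have correctly reconstructed the cited argument. The combinatorial bookkeeping you flag as the ``main obstacle'' is genuine and is handled carefully there; but within the present paper this is treated as a black box, so none of that work is required of you here.
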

\begin{proof}
    We start with $(i)$. The bound $\|\alpha_{n,K}\|=O_{n}(D_{K}^{\frac{1}{n}})$ is proved in \cite[Theorem $1.6$]{BSTTZ20}. Since the basis  $\{1, \alpha_{2,K}, \ldots, \alpha_{n,K}\}$ is Minkowski-reduced, we have  $\|\alpha_{i,K}\|\leq \|\alpha_{n,K}\|=O(D_{K}^{\frac{1}{n}})$ for every $i\in\{2,...,n\}$. Hence, $\delta_{i,K}\leq\frac{1}{n}+o(1)$ for every $i\in\{2,...,n\}$, as claimed.\newline
    For $(ii)$, it is enough to observe that by $(\ref{eq : succmindisc})$,
    \[
    D_{K}^{\frac{1}{2}}\gg_{n} \|\alpha_{2,K}\|\cdots\|\alpha_{n,K}\|\geq \|\alpha_{2,K}\|^{n-1}.
    \]
\end{proof}

Our starting point is the same as \cite[Theorem $1.1$]{BSTTZ20}: 
The size of the $2$-torsion of a number field extension is bounded by
\[
\#h_{2}(K)\ll\# \left\{(y,z_{1},...,z_{n})\in\mathbb{Z}^{n+1}:\begin{matrix}|z_{i}|\ll \frac{D_{K}^{\frac{1}{n}}}{\|\alpha_{i,K}\|},\quad\text{for }i=1,...,n\\y^{2}=N_{K}(z_{1}+z_{2}\alpha_{2,K}+\cdots+z_{n}\alpha_{n,K})\end{matrix}     \right\}.
\]
In \cite{BSTTZ20}, the bound for $\#h_{2}(K)$ (when $n\geq 5$) is achieved by fixing a choice of $z_{2},...,z_{n}$ and by using the Bombieri-Pila bound (\cite{BP89}) to count integral solutions $(y,x_{1})$ of the curve, 
\[
Y^{2}=N_{K}(X_{1}+z_{2}\alpha_{2,K}+\cdots+z_{n}\alpha_{n,K}),
\]
in the box $[-D_{K}^{\frac{1}{2}},D_{K}^{\frac{1}{2}}]\times [-D_{K}^{\frac{1}{n}},D_{K}^{\frac{1}{n}}]$. Similarly to Salberger's approach (\cite{Sal22}), we shall instead fix only $ z_{3},...,z_{n}$ and count integral solutions $(y,x_{1},x_{2})$ on the surface
\[
Y^{2}=N_{K}(X_{1}+X_{2}\alpha_{2,K}+z_{3}\alpha_{3,K}\cdots+z_{n}\alpha_{n,K}),
\]
in the box 
\begin{equation}
[-D_{K}^{\frac{1}{2}},D_{K}^{\frac{1}{2}}]\times [-D_{K}^{\frac{1}{n}},D_{K}^{\frac{1}{n}}]\times [-D_{K}^{\frac{1}{n}-\delta_{2,K}},D_{K}^{\frac{1}{n}-\delta_{2,K}}].
\label{eq : BoxB}
\end{equation}
One of the main difficulties in using this approach is that the box in $(\ref{eq : BoxB})$ has edges with highly unequal sizes: for instance, when $\frac{1}{2n} \leq \delta_{2,K} \leq \frac{1}{2(n-1)}+o(1)$, the range of the variable $X_{2}$ is smaller than the square-root threshold of the range of the variable $X_{1}$, making the analysis significantly more challenging. As a result, we are going to use a different approach depending on the size of $\delta_{2,K}$.
\subsection{ The case $ \delta_{2,K} \leq \frac{1}{2n}$.}
In this case, it will be enough to deploy the classical square sieve as introduced in \cite{HB84}, getting
\begin{thm}
Let $n \geq 5$ and $\varepsilon > 0$. For any number field $K$ of degree $n$, we
have
\[
\#h_{2}(K)\ll_{n,\varepsilon} D_{K}^{\frac{1}{2}-\frac{1}{2n}-\left(\frac{1}{6n}-\frac{\delta_{2,K}}{3}\right)+\varepsilon}
\]
\label{thm : deltalarge}
\end{thm}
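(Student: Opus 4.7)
The starting inequality gives
\[
\#h_{2}(K)\ll\#\!\left\{(z_{1},\ldots,z_{n})\in\mathbb{Z}^{n}:|z_{i}|\ll Z_{i},\ N_{K}\!\Bigl(\sum_{i=1}^{n}z_{i}\alpha_{i,K}\Bigr)\in\mathbb{Z}^{2}\right\},
\]
where we set $Z_{i}:=D_{K}^{1/n-\delta_{i,K}}$. The plan is to fix the outer variables $(z_{3},\ldots,z_{n})$ and apply Heath--Brown's classical square sieve to the resulting 2-variable problem in $(z_{1},z_{2})$. For such fixed outer variables, set $c:=\sum_{i\geq 3}z_{i}\alpha_{i,K}$ and consider
\[
F_{c}(z_{1},z_{2}):=N_{K}(z_{1}+z_{2}\alpha_{2,K}+c)\in\mathbb{Z}[z_{1},z_{2}],
\]
a polynomial of total degree $n$ which factors over $\bar{\mathbb{Q}}$ as $\prod_{\sigma}(z_{1}+\sigma(\alpha_{2,K})z_{2}+\sigma(c))$, the product ranging over the embeddings of $K$.

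For a parameter $Q$ to be optimized, let $\mathcal{P}$ be the set of primes in $[Q,2Q]$ unramified in $K$; by PNT one has $|\mathcal{P}|\asymp Q/\log Q$. The square sieve yields
\[
N(z_{3},\ldots,z_{n}):=\#\{(z_{1},z_{2}):F_{c}(z_{1},z_{2})\in\mathbb{Z}^{2}\}\ll\frac{Z_{1}Z_{2}\log D_{K}}{|\mathcal{P}|}+\frac{1}{|\mathcal{P}|^{2}}\sum_{\substack{p\neq q\\ p,q\in\mathcal{P}}}|S_{p,q}(c)|,
\]
where
\[
S_{p,q}(c)=\sum_{(z_{1},z_{2})\in[-Z_{1},Z_{1}]\times[-Z_{2},Z_{2}]}\left(\frac{F_{c}(z_{1},z_{2})}{pq}\right)
\]
is an incomplete two-dimensional multiplicative character sum. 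The decisive input is a uniform estimate $|S_{p,q}(c)|\ll pq\cdot(\log pq)^{O(1)}$ for all but a sparse set of pairs $(p,q)\in\mathcal{P}^{2}$. This is obtained by using the CRT isomorphism $\mathcal{O}_{K}/p\cong\prod_{\mathfrak{p}\mid p}\mathbb{F}_{p^{f_{\mathfrak{p}}}}$ to factor $\chi_{p}\circ N_{K}$ as a product of quadratic characters on the residue fields, applying the Weil--Deligne bounds to the resulting character sum over the two-dimensional $\mathbb{F}_{p}$-sublattice $\mathbb{Z}+\mathbb{Z}\alpha_{2,K}\pmod{p}$ of $\mathcal{O}_{K}/p$, and completing the incomplete sum via Polya--Vinogradov in each of the two variables.

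Substituting this estimate and choosing $Q\asymp(Z_{1}Z_{2})^{1/3}$ balances the two error terms and gives the per-slice bound $N(z_{3},\ldots,z_{n})\ll(Z_{1}Z_{2})^{2/3+\varepsilon}$. Using $(\ref{eq : succmindisc})$, which together with $Z_{1}=D_{K}^{1/n}$ yields $\prod_{i=1}^{n}Z_{i}\asymp D_{K}^{1/2}$ and hence $\prod_{i=3}^{n}Z_{i}\asymp D_{K}^{1/2}/(Z_{1}Z_{2})$, summation over $(z_{3},\ldots,z_{n})$ gives
\[
\#h_{2}(K)\ll\prod_{i=3}^{n}Z_{i}\cdot(Z_{1}Z_{2})^{2/3+\varepsilon}=D_{K}^{1/2}\cdot(Z_{1}Z_{2})^{-1/3+\varepsilon}=D_{K}^{\frac{1}{2}-\frac{2}{3n}+\frac{\delta_{2,K}}{3}+\varepsilon}
\]
since $Z_{1}Z_{2}=D_{K}^{2/n-\delta_{2,K}}$. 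Rewriting $\tfrac{2}{3n}=\tfrac{1}{2n}+\tfrac{1}{6n}$ recovers exactly the claimed exponent.

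The main obstacle is the uniform-in-$c$ character-sum bound $|S_{p,q}(c)|\ll pq$. The naive two-dimensional Weil estimate for a degree-$n$ polynomial yields only $(pq)^{3/2}$, which would produce a per-slice bound $(Z_{1}Z_{2})^{3/4}$ and a final exponent $\tfrac{1}{2}-\tfrac{1}{2n}+\tfrac{\delta_{2,K}}{4}$ that fails to improve on BSTTZ once $\delta_{2,K}>0$. Achieving the sharper $pq$-bound requires exploiting the specific factorization of $F_{c}\pmod{p}$ as a product of norm forms of linear polynomials over the residue fields $\mathcal{O}_{K}/\mathfrak{p}$ and carefully controlling the thin set of exceptional pairs $(p,q)$ at which $F_{c}\pmod{pq}$ becomes a square as a polynomial or the two-dimensional sublattice $\mathbb{Z}+\mathbb{Z}\alpha_{2,K}$ projects non-generically into $\prod_{\mathfrak{p}\mid p}\mathbb{F}_{p^{f_{\mathfrak{p}}}}$; uniformity over the outer parameters $c$ requires that this exceptional set be controlled in an averaged sense over the outer box.
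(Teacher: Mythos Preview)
Your overall strategy and the final arithmetic coincide with the paper's: fix $(z_{3},\ldots,z_{n})$, run Heath--Brown's square sieve on the two remaining variables, Poisson-complete the incomplete character sum, and choose $Q=(Z_{1}Z_{2})^{1/3}$ to get the per-slice bound $(Z_{1}Z_{2})^{2/3+\varepsilon}$. So the plan is correct in outline.

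The gap is exactly where you flag it. After completion, the relevant complete sum is
\[
T(\bfv;p)=\sum_{a_{1},a_{2}\in\mathbb{F}_{p}}\Bigl(\tfrac{F_{c}(a_{1},a_{2})}{p}\Bigr)e_{p}(v_{1}a_{1}+v_{2}a_{2}),
\]
and the whole argument stands or falls on proving $T(\bfv;p)\ll_{n}p$ for $(v_{1},v_{2})\not\equiv(0,0)$. Your sketch---factor $\chi_{p}\circ N_{K}$ via CRT over the residue fields and invoke ``Weil--Deligne''---does not actually supply this; a generic two-dimensional mixed character sum of this shape is only $\ll p^{3/2}$ by Deligne, and the CRT decomposition does not by itself separate the $(a_{1},a_{2})$-variables or reduce the transcendence degree. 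The paper instead uses Hooley's variance method: write $T$ as an exponential sum over the threefold $w^{2}=F_{c}(x,y)$, and for each $\tau$ count the fibre $N_{m}(\tau)=\#\{w^{2}=F_{c}(x,s'x+k')\}$ over $\mathbb{F}_{p^{m}}$. Because $F_{c}$ splits into \emph{linear} factors over $\overline{\mathbb{F}}_{p}$, one checks that the specialization $F_{c}(X,s'X+k')$ is square-free for all but $O_{n}(1)$ values of $k'$, so Lang--Weil gives $N_{m}(\tau)=p^{m}+O(p^{m/2})$ generically, whence $\sum_{\tau}|N_{m}(\tau)-p^{m}|^{2}\ll p^{2m}$ and Hooley's theorem yields $T\ll p$. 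This is the step your proposal is missing.

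Two further omissions: you do not treat the degenerate $\bfz$ for which $F_{c}(X_{1},X_{2})$ is a perfect square in $\mathbb{Z}[X_{1},X_{2}]$ (the paper bounds their contribution separately by $D_{K}^{1/4+1/n+\varepsilon}$ via a subfield argument), and you do not dispose of the case where $K$ contains an index-$2$ subfield (where $\#h_{2}(K)\ll D_{K}^{1/4+\varepsilon}$ directly). Finally, the admissibility constraint $Q\leq Z_{2}$, equivalently $\delta_{2,K}\leq\tfrac{1}{2n}$, should be noted; for larger $\delta_{2,K}$ the claimed exponent is weaker than \cite{BSTTZ20} and the statement is then automatic.
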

That is, when $0\leq \delta_{2,K}\leq \frac{1}{2n}$, Theorem $\ref{thm : main}$ holds for $\delta_{K}=\frac{1}{6n}-\frac{\delta_{2,K}}{3}$. Moreover, since $\frac{1}{6n}-\frac{\delta_{2,K}}{3}<0$ when $\delta_{2,K}\geq \frac{1}{2n}$, it follows the bound in Theorem $\ref{thm : deltalarge}$ is actually worst then the one in \cite[Theorem $1.1$]{BSTTZ20} in the range $\frac{1}{2n}\leq\delta_{2,K}\leq \frac{1}{2(n-1)}+o(1)$; hence, Theorem $\ref{thm : deltalarge}$ alone would not suffice to prove Theorem $\ref{thm : main}$.
\subsection{ The case $ \frac{1}{2n}\leq \delta_{2,K} \leq \frac{1}{2(n-1)}+o(1)$.}
As mentioned earlier, this case will be the most complicated due to the fact that the range of the variable $X_2$ is considerably smaller than the one of $X_1$. To address this, we combine the square sieve with the $q$-analogue of van der Corput (as in \cite{Pie06}), proving
\begin{thm}
Let $n\geq 5$ be an integer and $\varepsilon > 0$. For any number field $K$ of degree $n$, we
have
\[
\#h_{2}(K)\ll_{n,\varepsilon} D_{K}^{\frac{1}{2}-\frac{1}{2n}-\left(\frac{1}{7n}-\frac{3\delta_{2,K}}{14}\right)+\varepsilon},
\]
provided that $\frac{1}{4n}\leq\delta_{2,K}$.
\label{thm : deltasmall}
\end{thm}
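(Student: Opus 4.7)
\textbf{Plan for Theorem \ref{thm : deltasmall}.} Fix $(z_3, \ldots, z_n) \in \mathbb{Z}^{n-2}$ and set
\[
F(X_1, X_2) := N_{K}(X_1 + X_2\alpha_{2,K} + z_3\alpha_{3,K} + \cdots + z_n\alpha_{n,K}) \in \mathbb{Z}[X_1, X_2],
\]
a degree-$n$ binary form. Following the outline, it suffices to bound
\[
N(z_3,\ldots,z_n) := \#\{(x_1, x_2) \in \mathcal{B}' : F(x_1, x_2) \text{ is a perfect square}\},
\]
where $\mathcal{B}' := [-D_K^{1/n}, D_K^{1/n}] \times [-D_K^{1/n-\delta_{2,K}}, D_K^{1/n-\delta_{2,K}}]$, and sum over the $\asymp D_K^{1/2 - 2/n + \delta_{2,K}}$ admissible tuples $(z_3,\ldots,z_n)$, using (\ref{eq : succmindisc}) to rewrite $\sum_{i\ge 3}\delta_{i,K}=1/2-\delta_{2,K}+o(1)$.

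The plan is to combine Heath-Brown's square sieve with the $q$-analogue of van der Corput in the style of \cite{Pie06}. Choose a parameter $Q$ and a set $\mathcal{P}$ of primes in $[Q, 2Q]$ of density $\gg 1/\log Q$ avoiding the discriminant of $F$. The square sieve then gives
\[
N(z_3,\ldots,z_n) \ll \frac{|\mathcal{B}'|}{Q} + \frac{1}{|\mathcal{P}|^2} \sum_{p \neq q \in \mathcal{P}} |T(p,q)|, \qquad T(p,q) := \sum_{(x_1,x_2) \in \mathcal{B}'}\left(\frac{F(x_1,x_2)}{pq}\right),
\]
and the entire problem is reduced to bounding the character sums $T(p,q)$ for distinct primes $p, q \sim Q$.

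In the regime $\delta_{2,K}\ge \frac{1}{4n}$, for the relevant choice of $Q$ the short side length $X_2 = D_K^{1/n-\delta_{2,K}}$ falls below the completion threshold $\sqrt{pq}$, so a direct completion plus Polya--Vinogradov/Weil argument is insufficient. Following Pierce, I would apply Cauchy--Schwarz in $x_2$, then Weyl-differencing (the $q$-van der Corput) in the long variable $x_1$ with a shift parameter $H$. After expanding the square, this reduces $|T(p,q)|^2$ to an average over $|h|\le H$ of two-variable character sums of the shape
\[
\sum_{(x_1,x_2)}\left(\frac{F(x_1,x_2)\, F(x_1+h, x_2)}{pq}\right),
\]
which, provided $F(X_1,X_2)F(X_1+h,X_2)$ is not a square modulo $pq$ (generic in $h$), can be bounded nontrivially by Deligne's estimates for exponential sums over surfaces combined with completion. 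A three-parameter optimization over $Q$, $H$, and the exponent in $\delta_{2,K}$ then yields the claimed saving $\tfrac{1}{7n} - \tfrac{3\delta_{2,K}}{14}$; the cutoff $\delta_{2,K}\ge \tfrac{1}{4n}$ arises precisely as the range in which this differencing step improves on the pure square-sieve bound of Theorem \ref{thm : deltalarge}.

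\textbf{Main obstacle.} The principal technical difficulty is to control the ``bad'' set of shifts $h$ for which $F(X_1,X_2)F(X_1+h,X_2)$ degenerates modulo $p$ or $q$ (for example by becoming a square, or by defining a surface with an anomalously large singular locus); this requires geometric information about $F$---its irreducibility, Galois group, and singularities---in order to apply Weil-type bounds on a density-one set of shifts, with the exceptional $h$ absorbed into an acceptable error. A secondary obstacle is the delicate three-way balance between $Q$, $H$, and $\delta_{2,K}$, which is what produces the somewhat unusual exponent $\tfrac{1}{7n} - \tfrac{3\delta_{2,K}}{14}$ and the threshold $\delta_{2,K}\ge \tfrac{1}{4n}$.
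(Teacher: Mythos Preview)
Your plan diverges from the paper's argument in two structural ways that, as written, would prevent you from reaching the stated exponent.

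First, the paper does not use the basic square sieve with a single prime set $\mathcal{P}$. It uses the \emph{composite moduli} square sieve of Heath-Brown--Pierce (\cite{HP12}, recorded here as Lemma~\ref{Lem : sqsievecom}), with two prime sets $\mathcal{U}$ (size $\sim U$) and $\mathcal{V}$ (size $\sim V$), and moduli of the form $uu'vv'$. This extra layer is what produces the ``main sieve'' and ``prime sieve'' terms and is essential for the $q$-van der Corput step that follows.

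Second, the $q$-analogue of van der Corput is not Cauchy--Schwarz in $x_2$ followed by ordinary Weyl differencing in the long variable $x_1$. In the paper one first completes (Poisson) the \emph{long} variable $x_1$ modulo $q_1q_2$ (with $q_1=uu'$, $q_2=vv'$), and then applies van der Corput in the \emph{short} variable $x_2$ with shifts $x_2\mapsto x_2+q_2h$ that are \emph{multiples of $q_2$}. The point is that shifting by $q_2h$ leaves the mod-$q_2$ character unchanged, so after Cauchy--Schwarz the resulting correlation sum lives only modulo $q_1$. This reduces the analysis to the three-variable exponential sum $W(s,t,r;q_1)$ of Proposition~\ref{prop : Anumber}, and the stratification estimate there (an $A$-number-type statement, not a routine Deligne bound) is the real technical heart of the argument. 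Your shifted polynomial $F(x_1,x_2)F(x_1+h,x_2)$ never appears. The optimisation is then over $U$ and $V$ subject to $U=V^2$, giving $V=D_K^{\frac{1}{7n}+\frac{1}{14}(\frac{1}{n}-\delta_{2,K})}$; this is where the $7$ in the denominator comes from.

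Finally, your explanation of the threshold $\delta_{2,K}\ge \tfrac{1}{4n}$ is not the one operative in the paper: it is required not for the sieve optimisation but in Corollary~\ref{cor : Ffail}, to show that the set $Z_F$ of $\mathbf{z}$ for which $F_{\mathbf{z}}$ fails the hypotheses of Proposition~\ref{prop : Anumber}$ contributes acceptably.
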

That is, when $\frac{1}{4n}\leq \delta_{2,K}\leq \frac{1}{2(n-1)}+o(1)$, Theorem $\ref{thm : main}$ holds for $\delta_{K}=\frac{1}{7n}-\frac{3\delta_{2,K}}{14}$. Thus, by defining 
\[
\delta_{K}=\max\left\{\frac{1}{6n}-\frac{\delta_{2,K}}{3},\frac{1}{7n}-\frac{3\delta_{2,K}}{14}\right\}\geq \frac{1}{28n}-\frac{3}{28n(n-1)},
\]
one finally proves Theorem $\ref{thm : main}$.\newline
The main ingredient in proving Theorem $\ref{thm : deltalarge}$ is the following stratification result for exponential sums
\begin{prop}
Let $F(X,Y)\in\mathbb{F}_{p}[X,Y]$ a square free polynomial such that
\[
F[X,Y]=\prod_{i=1}^{n}(X+a_{i}Y+b_{i})\text{ over }\overline{\mathbb{F}}_{p},
\]
and assume that the following conditions hold:
\begin{itemize}
\item[$i)$] There exists $i\in\{2,...,n\}$ such that $a_{1}\neq a_{i}$.
\item[$ii)$] $\frac{a_{i}-a_{1}}{a_{j}-a_{1}}\cdot(b_{j}-b_{1})\neq b_{i}-b_{1}$ for any $i,j$ such that $a_{j}\neq a_{1}$ and $i\neq j$.
\item[$iii)$] For every $i,j$ with $i\neq j$, $b_{i}\neq b_{j}$.
\end{itemize}
Then there exists $A_{s}\subset\{(r,t)\in\mathbb{F}_{p}\times\mathbb{F}_{p}\}$ such that
\begin{itemize}
\item[$i)$] for any $(r,t)\in (\mathbb{F}_{p}\times\mathbb{F}_{p})\setminus A_{s}$, one has 
\[
\left|\sum_{(x,y,z)\in\mathbb{F}_{p}^{3}}\left(\frac{F(x,y)}{p}\right)\left(\frac{F(z,y+r)}{p}\right)e_{p}(s(x-z)+ty)\right|\ll_{n} p^{3/2}(s,t,p)^{1/2
}.
\]
\item[$ii)$] $\# A_{s}=O_{n}(1)$.
\end{itemize}
\label{prop : Anumber}
\end{prop}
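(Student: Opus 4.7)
The plan is to reorganize $S(r,s,t)$ as a one-variable correlation sum, extract square-root cancellation in $y$ via a Weil/Deligne-type stratification estimate, and then identify the exceptional set $A_s$ using the translation rigidity of the singular locus of an auxiliary $\ell$-adic sheaf---a rigidity that hypotheses $(i)$--$(iii)$ are tailored to enforce. Setting $C(y,s):=\sum_{x\in\mathbb{F}_p}\left(\frac{F(x,y)}{p}\right) e_p(sx)$ and using that the Legendre symbol is real-valued, one rewrites
\[
S(r,s,t) = \sum_{y\in\mathbb{F}_p} e_p(ty)\, C(y,s)\,\overline{C(y+r,s)}.
\]
Hypothesis $(iii)$ implies that $F(X,y)=\prod_i(X+a_iy+b_i)$ is squarefree in $X$ for all but $O_n(1)$ values of $y$, so Weil's bound gives $|C(y,s)| \ll_n p^{1/2}$ off this exceptional locus. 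Consequently $\kappa_s(y):=C(y,s)/\sqrt{p}$ is the Frobenius trace of a middle-extension $\ell$-adic sheaf $\mathcal{F}_s:=R^1(\mathrm{pr}_2)_!(\mathcal{L}_\chi(F)\otimes\mathcal{L}_{\psi(sX)})$ on $\mathbb{A}^1_y$, pure of weight $0$, of rank and conductor bounded in terms of $n$.

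Define $A_s\subset \mathbb{F}_p^2$ to be the set of $(r,t)$ for which $\mathcal{F}_s$ is geometrically isomorphic to $[+r]^*\mathcal{F}_s\otimes\mathcal{L}_{\psi(ty)}$. For $(r,t)\notin A_s$, the Fouvry--Kowalski--Michel correlation estimate---equivalently Deligne's Weil~II applied to the Kummer--Artin--Schreier sheaf on $\mathbb{A}^3$---yields
\[
\left|\sum_{y}\kappa_s(y)\,\overline{\kappa_s(y+r)}\,e_p(ty)\right| \ll_n p^{1/2},
\]
so $|S(r,s,t)|\ll_n p^{3/2}$, which matches the claim when $(s,t,p)=1$. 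In the degenerate case $p\mid s,\ p\mid t$ the additive twist trivializes and one has $|S|\leq \sum_y |C(y,0)|^2 \ll_n p^2 = p^{3/2}(s,t,p)^{1/2}$, so the desired bound holds outside $A_s$ in every case.

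It remains to show $\#A_s=O_n(1)$. A geometric isomorphism $\mathcal{F}_s\simeq [+r]^*\mathcal{F}_s\otimes\mathcal{L}_{\psi(ty)}$ forces $\sing(\mathcal{F}_s)-r=\sing(\mathcal{F}_s)$ as finite subsets of $\overline{\mathbb{F}_p}$. This locus lies inside $\{y_{ij}:=(b_j-b_i)/(a_i-a_j)\colon i\neq j,\ a_i\neq a_j\}$, the set of $y$'s at which $F(X,y)$ develops a repeated root in $X$. Hypothesis $(i)$ ensures this set is nonempty, while $(ii)$ ensures that the subfamily $\{y_{1i}\colon a_i\neq a_1\}$ consists of pairwise distinct elements, so $|\sing(\mathcal{F}_s)|\geq 2$. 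The additive translation stabilizer in $\overline{\mathbb{F}_p}$ of a finite set of size $k$ has size $\leq k$; hence the admissible $r$ form a set of size $O_n(1)$, and for each such $r$ the matching of local monodromies at the points of $\sing(\mathcal{F}_s)$ (or of Swan conductors at $\infty$) pins down $t$ to an $O_n(1)$ set. The main technical obstacle is this last rigidity step: certifying that the ramification data of $\mathcal{F}_s$ at the $y_{ij}$ and at $\infty$ (where the Swan conductor depends on $s$ through the Artin--Schreier twist) are rigid enough to preclude accidental translation/twist symmetries beyond the combinatorially forced ones, which is precisely what (i)--(iii) guarantee uniformly in $p$.
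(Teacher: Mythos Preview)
Your approach via the Fouvry--Kowalski--Michel trace function machinery is a natural route and is genuinely different from the paper's. The paper instead uses Hooley's reduction (its Theorem~\ref{thm : hooley}): it rewrites the triple sum as a point count on $w^{2}=F(x,y)F(z,y+r)$ intersected with the hyperplane $s(x-z)+ty=k$, eliminates $z$ (for $s\neq 0$) to land on the surface $w^{2}=F(x,y)F(x+\overline{s}(ty-k),y+r)$, and then estimates $N_{m}(k)$ by Lang--Weil for the $O_n(1)$ bad values of $k$ and by the auxiliary Lemma~\ref{lem : multchar} (no geometrically trivial component of $\mathcal{H}_{F,\chi}$) for the remaining $k$. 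The exceptional set $A_{s}$ is then \emph{explicit}: it consists of those $(r,t)$ for which the product $F(X,Y)F(X+\overline{s}(tY-k),Y+r)$ can be a perfect square for some $k$, and this is ruled out by an elementary matching of linear factors in which hypotheses $(i)$--$(iii)$ enter directly. The advantage of this route is that it never requires analyzing the sheaf $\mathcal{F}_{s}$ itself---no irreducibility, no ramification at $\infty$.

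Your sketch, by contrast, has a genuine gap at precisely the point you flag. First, the FKM correlation estimate fails whenever $\mathcal{F}_{s}$ and $[+r]^{*}\mathcal{F}_{s}\otimes\mathcal{L}_{\psi(ty)}$ share \emph{any} geometrically irreducible component, not only when they are globally isomorphic; your $A_{s}$ is therefore defined too narrowly unless you also prove $\mathcal{F}_{s}$ is geometrically irreducible, which you do not (and which is itself nontrivial under $(i)$--$(iii)$). Second, your mechanism for pinning down $t$ once $r$ is fixed does not work as stated: the local monodromy at the finite singular points $y_{ij}$ is unaffected by the lisse twist $\mathcal{L}_{\psi(ty)}$, so matching it gives no constraint on $t$; and for $s\neq 0$ the sheaf $\mathcal{F}_{s}$ is built with an Artin--Schreier factor in the $X$-direction, so its $\infty$-breaks in $y$ need not all be $<1$, and the usual Swan-conductor rigidity argument is not immediate. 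A determinant trick does bound $\{t:\mathcal{G}\simeq\mathcal{G}\otimes\mathcal{L}_{\psi_{t}}\}$ for each irreducible summand $\mathcal{G}$ of rank $<p$, but you would still have to control how many summands $\mathcal{F}_{s}$ has and how translations by $r$ can permute them. (Incidentally, your claim $|\sing(\mathcal{F}_{s})|\geq 2$ is not justified: hypothesis $(i)$ only guarantees one index $i$ with $a_{i}\neq a_{1}$, so the set $\{y_{1i}\}$ may be a singleton---though this alone is not fatal, since a nonempty finite set already has bounded translation stabilizer.) The paper's Hooley-based argument sidesteps all of these sheaf-theoretic subtleties entirely.
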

When $(s,p)=1$, Proposition $\ref{prop : Anumber}$, implies that the exponential sum 
\begin{equation}
\sum_{(x,y,z)\in\mathbb{F}_{p}^{3}}\left(\frac{F(x,y)}{p}\right)\left(\frac{F(z,y+r)}{p}\right)e_{p}(s(x-z)+ty)
\label{eq : Anumber}
\end{equation}
exhibits square root cancellation for all $(r,t)$ which are not contained in a set of codimension $0$ in $\mathbb{F}_{p}^{2}$.  Then Proposition $\ref{prop : Anumber}$ is the analogue of showing that the $A$-number associated to the exponential sum $\ref{eq : Anumber}$, in the sense of \cite{Kat89}, is not zero. We remark that, in general, determining whether the $A$-number associated with an exponential sum does not vanishes is a difficult problem (see \cite{Kat94}, \cite{FK01}).
\section{Previous results}
The size of the $h_{\ell}(K)$, for a given number field $K$ of degree $n$, has been the object of extensive studies in the past years. As we mentioned in the introduction, the bound in $(\ref{conj : ell})$ has been established only in the case $n,\ell=2$. Many efforts have been made to prove power-saving bounds of the form
\[
\#h_\ell(K) \ll_{n,\varepsilon} D_{K}^{\frac{1}{2}-\delta_{n,\ell}+\varepsilon}.
\]
We can summarize all the cases in which some power-saving bound has been achieved in the following table:

\begin{table}[h]
\centering
\begin{tabular}{|l|c|c|c|l|}
\hline
\textbf{Source} & $\boldsymbol{\delta_{n,\ell}}$ & $\mathbf{n}$ & $\boldsymbol{\ell}$ & \textbf{Other restrictions} \\
\hline
Gauss & 1/2 & 2 & 2 & \\
\hline
\cite{Pie05,Pie06} &  $1/56$ & 2 & 3 & \\
\hline
\cite{HV06} & $0.0582\ldots$ & 2 & 3 & \\
\hline
\cite{EV07} & $ 1/6$ & 2, 3 & 3 & \\
\hline
\cite{CK25} & $ 0.1806$ & 2 & 3 & \\
\hline
\cite{HB24} & $ 1/(2\ell)$ & 2, 3 & prime & $p \mid D_K \Rightarrow p \leq D_K^{\delta_\ell}$ \\
\hline
\cite{HB24} & $ 1/(4\ell)$ & 3 & prime & pure cubic extension \\
\hline
\cite{BSTTZ20} & $ 0.2215\ldots$ & 3, 4 & 2 & \\
\hline
\cite{EV07} & $ 1/168$ & 4 & 3 & $\mathrm{Gal}(K/\mathbb{Q}) \cong A_4$ or $S_4$ \\
\hline
\cite{EV07} & $ \delta$ & 4 & 3 & $\mathrm{Gal}(K/\mathbb{Q}) \cong C_4$, $V_4$, or $D_4$ \\
\hline
\cite{BSTTZ20} & $ 1/(2n)$ & $\geq 5$ & 2 & \\
\hline
\cite{Sal22} & $ \frac{2-n\delta_{2,K}+2\sqrt{ 1-2n\delta_{2,K}}}{n}$ & $\geq 5$ & 2 &   $\delta_{2,K}\leq 1/(2n)$\\
\hline
\cite{KW22} & 1/2 & $p^r$ & $p^s$ & $\mathrm{Gal}(K/\mathbb{Q}) \cong G$, any $p$-group $G$ \\
\hline
\cite{Wan21,Wan20} & $ \delta_{G,\ell}$ & powerful $>2$ & & $\mathrm{Gal}(K/\mathbb{Q}) \cong G$, many nilp. grps $G$ \\
\hline
\end{tabular}
\end{table}
We remark that Theorem $\ref{thm : deltalarge}$ improves upon \cite{Sal22} for all number field $K$ with $0\leq \delta_{2,K}\leq \frac{1}{2n}$. Significant efforts have been made to achieve unconditional improvements on the trivial bound \textit{on average across families of number fields}: \cite{HP17}, \cite{EPW17}, \cite{Wid18}, \cite{FW18}, \cite{PTBW20}, \cite{An20}, \cite{FW21}, \cite{TZ23}, \cite{LOTZ24}, \cite{KT24}, \cite{LOWW25}, and \cite{CK25}. 
\subsection{Some application of Therem \ref{thm : deltasmall}.}\
We conclude the intrudiction by presenting some consequences of Theorem $\ref{thm : deltasmall}$.
\begin{cor}
    Let $\mathbb{Q}\subset F$ be a finite extension of $\mathbb{Q}$. Then for every $\mathbb{Q}\subset F\subset K$ extension of degree $n$, we have
    \[
    \# h_{2}(K)\ll_{D_{F},n,\varepsilon}D_{K}^{\frac{1}{2}-\frac{2}{3n}+\varepsilon}.
    \]
\end{cor}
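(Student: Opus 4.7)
The plan is to use the fact that a fixed subfield $F\subset K$ forces the second successive minimum of $\mathcal{O}_{K}$ to stay uniformly bounded, so that $\delta_{2,K}\to 0$ as $D_{K}\to\infty$, and then to feed this into Theorem~\ref{thm : deltalarge}.

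First I would choose a short non-rational element $\beta\in\mathcal{O}_{F}\setminus\mathbb{Z}$, for instance the second vector of a Minkowski-reduced basis of the $\mathbb{Z}$-lattice $\mathcal{O}_{F}$. Part~$(ii)$ of Proposition~$2.1$, applied to the field $F$ itself, yields
\[
\|\beta\|_{F}\ll_{[F:\mathbb{Q}]} D_{F}^{1/(2([F:\mathbb{Q}]-1))}=O_{D_{F}}(1),
\]
since $F$ is fixed. To pass from $F$ to $K$, I would use that every embedding $\tau\in\Emb(F/\mathbb{Q})$ extends in exactly $[K:F]$ ways to an embedding in $\Emb(K/\mathbb{Q})$, each of which restricts to $\tau$ on $F$. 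Consequently
\[
\|\beta\|_{K}=\sum_{\sigma\in\Emb(K/\mathbb{Q})}|\sigma(\beta)|=[K:F]\,\|\beta\|_{F}=O_{D_{F},n}(1).
\]
Because $\alpha_{2,K}$ is, up to a constant depending only on $n$, the shortest element of $\mathcal{O}_{K}$ outside $\mathbb{Z}\cdot 1$ and $\beta\notin\mathbb{Z}$, this forces $\|\alpha_{2,K}\|=O_{D_{F},n}(1)$, and hence
\[
\delta_{2,K}=\frac{\log\|\alpha_{2,K}\|}{\log D_{K}}=O_{D_{F},n}\!\left(\frac{1}{\log D_{K}}\right).
\]
In particular $\delta_{2,K}\leq 1/(2n)$ once $D_{K}$ is sufficiently large in terms of $F$ and $n$, and the factor $D_{K}^{\delta_{2,K}/3}$ is uniformly $O_{D_{F},n}(1)$.

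Finally, I would feed this into Theorem~\ref{thm : deltalarge} (this is the relevant result, rather than Theorem~\ref{thm : deltasmall}, since the latter requires $\delta_{2,K}\geq 1/(4n)$ and hence is incompatible with $\delta_{2,K}\to 0$). The exponent produced by Theorem~\ref{thm : deltalarge} becomes
\[
\tfrac{1}{2}-\tfrac{1}{2n}-\Bigl(\tfrac{1}{6n}-\tfrac{\delta_{2,K}}{3}\Bigr)+\varepsilon=\tfrac{1}{2}-\tfrac{2}{3n}+\tfrac{\delta_{2,K}}{3}+\varepsilon,
\]
and after absorbing the uniformly bounded factor $D_{K}^{\delta_{2,K}/3}$ into the implicit constant one obtains the claimed inequality. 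I do not anticipate any genuine obstacle: the entire argument is a short reduction, and the only bookkeeping point is the $D_{F}$-dependence of the implicit constant (with the remaining range of bounded $D_{K}$ handled trivially by absorbing $\#h_{2}(K)$ into that same constant).
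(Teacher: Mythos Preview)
Your proof is correct and follows essentially the same route as the paper: bound $\|\alpha_{2,K}\|$ by a quantity depending only on $F$ (and $n$), then plug into Theorem~\ref{thm : deltalarge}. Your argument is in fact a bit more careful than the paper's one-liner $\|\alpha_{2,K}\|\leq\|\alpha_{2,F}\|$, since you make the passage between the $F$-norm and the $K$-norm explicit via the $[K:F]$ factor. You are also right that Theorem~\ref{thm : deltalarge} is the relevant input here; the paper's proof mistakenly cites Theorem~\ref{thm : deltasmall} while actually writing down and using the exponent from Theorem~\ref{thm : deltalarge}.
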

\begin{proof}
    Since $F\subset K$, one has that $\|\alpha_{2,K}\|\leq \|\alpha_{2,F}\|$. Hence, Theorem $\ref{thm : deltasmall}$ implies
    \[
    h_{2}(K)\ll_{n,\varepsilon} D_{K}^{\frac{1}{2}-\frac{1}{2n}-\left(\frac{1}{6n}-\frac{\delta_{2,K}}{3}\right)+\varepsilon}=D_{K}^{\frac{1}{2}-\frac{2}{3n}+\varepsilon}\|\alpha_{2,K}\|^{\frac{1}{3}}\leq D_{K}^{\frac{1}{2}-\frac{2}{3n}+\varepsilon}\|\alpha_{2,F}\|^{\frac{1}{3}}\ll_{D_{F}}D_{K}^{\frac{1}{2}-\frac{2}{3n}+\varepsilon}. 
    \]
\end{proof}

\begin{cor}
Let $\varepsilon > 0$ and let $q$ be a prime number. Then
\[
\# h_{2}(\mathbb{Q}[\sqrt[n]{q}])\ll_{n,\varepsilon} D_{\mathbb{Q}[\sqrt[n]{q}]}^{\frac{1}{2}-\frac{2}{3n}+\frac{1}{3n(n-1)}+\varepsilon}
\]
\end{cor}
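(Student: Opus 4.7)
The plan is to compute $\delta_{2,K}$ explicitly for $K=\mathbb{Q}[\sqrt[n]{q}]$ and then invoke Theorem \ref{thm : deltalarge}. (The statement cites Theorem \ref{thm : deltasmall}, but the numerology of the exponent matches the range $\delta_{2,K}\leq \frac{1}{2n}$ governed by Theorem \ref{thm : deltalarge}, exactly as in the proof of the preceding corollary.) First I would estimate the discriminant. The minimal polynomial $X^n-q$ of $q^{1/n}$ has discriminant $\pm n^n q^{n-1}$, so $\mathcal{O}_K$ embeds $\mathbb{Z}[q^{1/n}]$ with bounded index. For $q$ large enough that $q\nmid n$, the prime $q$ is totally and tamely ramified in $K$ via $q=(q^{1/n})^n$, contributing exactly $q^{n-1}$ to $D_K$; the remaining ramified primes must divide $n$ and contribute only $O_n(1)$. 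Hence $D_K\asymp_{n} q^{n-1}$, so $\log D_K=(n-1)\log q+O_n(1)$.

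Next I would bound $\|\alpha_{2,K}\|$. Since $q^{1/n}\in\mathcal{O}_K$ and each of the $n$ complex embeddings sends $q^{1/n}$ to an element of absolute value $q^{1/n}$, we have $\|q^{1/n}\|=nq^{1/n}$. As $1$ and $q^{1/n}$ are $\mathbb{Q}$-linearly independent, the defining property of a Minkowski-reduced basis yields $\|\alpha_{2,K}\|\leq\|q^{1/n}\|$, so
\[
\delta_{2,K}=\frac{\log\|\alpha_{2,K}\|}{\log D_K}\leq \frac{(1/n)\log q+O_n(1)}{(n-1)\log q+O_n(1)}\leq \frac{1}{n(n-1)}+o_n(1)
\]
as $q\to\infty$; the finitely many small $q$ are absorbed into the implicit constant.

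Finally, for $n\geq 4$ one has $\frac{1}{n(n-1)}<\frac{1}{2n}$, so $\delta_{2,K}\leq \frac{1}{2n}$ and Theorem \ref{thm : deltalarge} applies. Substituting the bound on $\delta_{2,K}$:
\[
\#h_2(K)\ll_{n,\varepsilon} D_K^{\frac{1}{2}-\frac{1}{2n}-\left(\frac{1}{6n}-\frac{\delta_{2,K}}{3}\right)+\varepsilon}\leq D_K^{\frac{1}{2}-\frac{2}{3n}+\frac{1}{3n(n-1)}+\varepsilon},
\]
as claimed. There is no substantive obstacle: the corollary is simply the instance of Theorem \ref{thm : deltalarge} in which the explicit short algebraic integer $q^{1/n}$ forces $\delta_{2,K}$ to be as small as $\frac{1}{n(n-1)}+o(1)$, and the arithmetic of the exponent then produces the stated saving of $\frac{2}{3n}-\frac{1}{3n(n-1)}$.
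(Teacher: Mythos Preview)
Your argument is correct and follows exactly the same route as the paper: bound $\|\alpha_{2,K}\|\ll q^{1/n}$, use $D_K\asymp_n q^{n-1}$ to get $\delta_{2,K}\leq \frac{1}{n(n-1)}+o(1)$, and plug into the bound $\#h_2(K)\ll D_K^{\frac{1}{2}-\frac{2}{3n}+\frac{\delta_{2,K}}{3}+\varepsilon}$. You are also right that the paper's citation of Theorem~\ref{thm : deltasmall} here (and in the preceding corollary) is a mislabeling---the displayed exponent is that of Theorem~\ref{thm : deltalarge}, and indeed Theorem~\ref{thm : deltasmall} does not even apply since $\frac{1}{n(n-1)}<\frac{1}{4n}$ for $n\geq 5$.
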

\begin{proof}
One has $\|\alpha_{2,K}\|\ll q^{\frac{1}{n}}$ and $D_{\mathbb{Q}[\sqrt[n]{q}]}=q^{n-1}$. Then,
\[
\|\alpha_{2,K}\|\ll q^{\frac{1}{n}}=(q^{n-1})^{\frac{1}{n(n-1)}}= D_{\mathbb{Q}[\sqrt[n]{q}]}^{\frac{1}{n(n-1)}},
\]
which implies $\delta_{\mathbb{Q}[\sqrt[n]{q}],2}\leq \frac{1}{n(n-1)}$. Hence, an application of Theorem $\ref{thm : deltasmall}$ complete the proof of the Corollary.
\end{proof}
\section{Proof of Theorem $\ref{thm : main}$}
\subsection{Some Lemmas}
As mentioned in the introduction, the idea is to deploy the square sieve to detect how often the norm
\[
N_{K}(X_{1}+X_{2}\alpha_{2,K}+z_{3}\alpha_{3,K}+\cdots+z_{n}\alpha_{n,K}) =: F_{\bfz}(X_{1},X_{2}),
\]
is a square, for $(x_{1},x_{2}) \in \left[-D_{K}^{\frac{1}{n}}, D_{K}^{\frac{1}{n}}\right] \times \left[-D_{K}^{\frac{1}{n}-\delta_{2,K}}, D_{K}^{\frac{1}{n}-\delta_{2,K}}\right]\cap\mathbb{Z}^{2}$, and where
\[
\bfz\in \left[-D_{K}^{\frac{1}{n}-\delta_{3,K}}, D_{K}^{\frac{1}{n}-\delta_{3,K}}\right] \times \cdots \times \left[-D_{K}^{\frac{1}{n}-\delta_{n,K}}, D_{K}^{\frac{1}{n}-\delta_{n,K}}\right]\cap\mathbb{Z}^{n-2}
\]
is fixed. Before applying the sieve, we must exclude certain choices of $\bfz$ for which our method does not apply. Namely:
\begin{itemize}
    \item[$(i)$] Those $\bfz\in\mathbb{Z}^{n-2}$ such that the polynomial $N_{K}(X_{1}+X_{2}\alpha_{2,K}+z_{3}\alpha_{3,K}+\cdots+z_{n}\alpha_{n,K})$ is a square, i.e., the equation
    \[
    Y^{2} = N_{K}(X_{1}+X_{2}\alpha_{2,K}+z_{3}\alpha_{3,K}+\cdots+z_{n}\alpha_{n,K})
    \]
    has a solution for every choice of $X_{1}, X_{2}$. We denote the set of these points as $Z_{\square}$.  
    \item[$(ii)$] Those $\bfz\in\mathbb{Z}^{n-2}$ for which the the polynomial $F_{\bfz}(X_{1},X_{2})$ does not satisfies the hypotheses of Proposition \ref{prop : Anumber} over $\overline{\mathbb{Q}}$. We denote the set of these points as $Z_{F}$.
\end{itemize}
To simplify the notation, from now on we denote
\[
\mathcal{B}= \left[-D_{K}^{\frac{1}{n}}, D_{K}^{\frac{1}{n}}\right] \times \left[-D_{K}^{\frac{1}{n}-\delta_{2,K}}, D_{K}^{\frac{1}{n}-\delta_{2,K}}\right],
\]
and
\[
\mathcal{C} = \left[-D_{K}^{\frac{1}{n}-\delta_{3,K}}, D_{K}^{\frac{1}{n}-\delta_{3,K}}\right] \times \cdots \times \left[-D_{K}^{\frac{1}{n}-\delta_{n,K}}, D_{K}^{\frac{1}{n}-\delta_{n,K}}\right].
\]
 \subsubsection{The contribution of $Z_{\square}$} 
We start by recording the following Lemma, which is proved in \cite[Bottom of page $6$]{BSTTZ20} and will be useful in bounding the contribution of the $\bfz\in Z_{\square}$:
\begin{lem}
    Let $K/\mathbb{Q}$ be a number field and assume that there exists $\mathbb{Q}\subset F\subset K$, such that $[K:F]=2$. Then $\# h_{2}(K)\ll_{\varepsilon} D_{K}^{\frac{1}{4}+\varepsilon}$.
    \label{lem : index2}
\end{lem}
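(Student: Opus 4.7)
The plan is to leverage the intermediate field $F$ via Chevalley's ambiguous class number formula, reducing the $2$-torsion count in $h(K)$ to the class number of $F$ plus a genus-theoretic error, and then applying the trivial Brauer-Siegel/Minkowski bound on $\#h(F)$ together with the discriminant tower inequality.

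Let $\sigma$ generate $\Gal(K/F)$, and consider the relative norm $N_{K/F}\colon h(K)\to h(F)$. Since norms of principal ideals are principal, $N_{K/F}$ restricts to a map $h_2(K)\to h_2(F)$. If $[I]\in h_2(K)$ lies in $\ker N_{K/F}$, then $I^2=(\beta)$ and $I\sigma(I)=(\alpha)$ are both principal in $\mathcal{O}_K$, so $\sigma(I)/I=(\alpha/\beta)$ is principal and $[I]$ is fixed by $\sigma$. Hence the kernel lies in the ambiguous class group $\mathrm{Am}(K/F):=h(K)^\sigma$, and
\[
\#h_2(K)\;\leq\;\#h_2(F)\cdot \#\bigl(\mathrm{Am}(K/F)\cap h_2(K)\bigr).
\]

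By Chevalley's ambiguous class number formula,
\[
\#\mathrm{Am}(K/F)\;=\;\frac{\#h(F)\cdot 2^{t-1}}{[\mathcal{O}_F^\times : \mathcal{O}_F^\times \cap N_{K/F}(K^\times)]},
\]
where $t$ is the number of places of $F$ (finite or archimedean) ramifying in $K$. Every ramified finite prime divides the relative discriminant $\mathfrak{d}_{K/F}$, which divides $D_K$, so $t=O(\log D_K/\log\log D_K)$ and $2^t\ll D_K^\varepsilon$. Combining this with the short exact sequence $1\to j(h(F))\to \mathrm{Am}(K/F)\to B\to 1$ from class field theory (where $j$ denotes the conorm and $|B|\leq 2^{t-1}$), and tracking the $2$-torsion carefully so as not to double-count the conorm contribution $j(h_2(F))$ already present in the image of the norm, one obtains
\[
\#h_2(K)\;\ll_\varepsilon\; \#h(F)\cdot D_K^\varepsilon.
\]

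Finally, the trivial Brauer-Siegel bound gives $\#h(F)\ll_\varepsilon D_F^{1/2+\varepsilon}$, while the discriminant tower $D_K = N_{F/\mathbb{Q}}(\mathfrak{d}_{K/F})\cdot D_F^{[K:F]}\geq D_F^2$ forces $D_F\leq D_K^{1/2}$, so
\[
\#h_2(K)\;\ll_\varepsilon\;D_F^{1/2+\varepsilon}\cdot D_K^\varepsilon\;\leq\;D_K^{1/4+\varepsilon}.
\]
The main obstacle is the $2$-torsion bookkeeping in the second paragraph: the crude estimate $\#h_2(K)\leq \#h_2(F)\cdot \#\mathrm{Am}(K/F)$ combined directly with Chevalley's formula only yields $\#h_2(K)\ll D_F^{1+\varepsilon}\leq D_K^{1/2+\varepsilon}$, losing a factor of $D_K^{1/4}$. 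Extracting the sharper bound requires exploiting that the subgroup $j(h(F))\subseteq \mathrm{Am}(K/F)$ maps via the norm to classes already counted in the image of $N_{K/F}$, so only the genuine ramification contribution (of size $\ll 2^t\ll D_K^\varepsilon$) should be counted in the kernel.
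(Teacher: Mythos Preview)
The paper does not prove this lemma itself; it merely records the statement and cites \cite[bottom of page~6]{BSTTZ20} for the argument, so there is no in-paper proof to compare against directly.

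Your genus-theory approach has a genuine gap at precisely the step you yourself flag. You correctly show that the kernel $K_1$ of $N_{K/F}|_{h_2(K)}$ lies in $\mathrm{Am}(K/F)=h(K)^\sigma$, whence
\[
\#h_2(K)\ \le\ \#h_2(F)\cdot\#K_1,\qquad K_1\subseteq h(K)^\sigma[2].
\]
Filtering $h(K)^\sigma$ by $j(h(F))$ with quotient $B$, one finds $K_1\cap j(h(F))=j(h_2(F))$ (up to capitulation), so $\#K_1\le\#h_2(F)\cdot\#B$ and altogether $\#h_2(K)\le\#h_2(F)^2\cdot\#B$. Your proposed repair --- that the copy of $j(h_2(F))$ inside the kernel is ``already counted in the image of $N_{K/F}$'' --- does not work: the image of $N_{K/F}|_{h_2(K)}$ is a subgroup of $h_2(F)\subseteq h(F)$, whereas $j(h_2(F))$ sits inside the \emph{kernel} in $h(K)$. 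These are independent contributions to the product $\#(\mathrm{image})\cdot\#(\mathrm{kernel})$, not a double count (indeed $N\circ j=[2]$ annihilates $h_2(F)$, so $j(h_2(F))$ lands squarely in $K_1$ and contributes nothing to the image). When $h(F)\cong(\mathbb{Z}/2)^k$ is entirely $2$-torsion your bound reads $\#h_2(K)\ll\#h(F)^2\cdot D_K^\varepsilon$, and after $D_F\le D_K^{1/2}$ this recovers only the trivial $D_K^{1/2+\varepsilon}$. Getting from $\#h_2(F)^2$ down to $D_K^{1/4}$ requires an additional ingredient --- either independent control on $\mathrm{rank}_2\,h(F)$, or an argument of a different nature --- which the sketch does not supply.
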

\begin{cor}
    Assume that $K/\mathbb{Q}$ does not contain a subfield of index $2$. Then $Z_{\square}$ is empty if $n=5,6,7$, and 
    \[
    \sum_{\substack{\bfz\in\mathcal{C}\cap Z_{\square}}}\sum_{\substack{(x_{1},x_{2})\in\mathcal{B}\\F_{\bfz}(x_{1},x_{2})=\square}}1\ll_{n,\varepsilon} D_{K}^{\frac{1}{4}+\frac{1}{n}+\varepsilon},
    \]\
    for $n\geq 8$.
     \label{cor : square}
\end{cor}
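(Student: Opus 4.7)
The plan is to reduce the condition $\bfz\in Z_{\square}$ to an algebraic statement about a subfield of $K$ and then apply a lattice-point bound. Setting $F=\mathbb{Q}(\alpha_{2,K})$ and $u=X_{1}+X_{2}\alpha_{2,K}$, one writes
\[
F_{\bfz}(X_{1},X_{2})=N_{F/\mathbb{Q}}\bigl(Q(u)\bigr),\quad Q(u)=N_{K/F}(u+\gamma(\bfz))\in F[u],
\]
where $\gamma(\bfz)=\sum_{j=3}^{n}z_{j}\alpha_{j,K}$. Because for distinct $\mathbb{Q}$-embeddings $\tau\neq \tau'$ of $F$ the linear forms $X_{1}+X_{2}\tau(\alpha_{2,K})$ and $X_{1}+X_{2}\tau'(\alpha_{2,K})$ have different $X_{2}$-coefficients, linear factors of the $\tau$- and $\tau'$-conjugates of $Q(u)$ cannot collide in $\overline{\mathbb{Q}}[X_{1},X_{2}]$. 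It follows that $F_{\bfz}$ is a square iff $Q(u)$ is a square in $F[u]$; since the roots of $Q$ are $\{-\sigma(\gamma(\bfz)):\sigma\in\Emb(K/F)\}$, each occurring with multiplicity $[K:F(\gamma(\bfz))]$, this is equivalent to $[K:F(\gamma(\bfz))]$ being even. Establishing this algebraic equivalence cleanly is the main obstacle of the proof; everything else is combinatorics.

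Given the equivalence, the case $n\in\{5,6,7\}$ is immediate: the hypothesis excludes subfields of index $2$ in $K$, so $F':=F(\gamma(\bfz))$ for $\bfz\in Z_{\square}$ must satisfy $[K:F']\geq 4$ and even. For $n=5,7$ no divisor of $n$ exceeding $1$ is even, while for $n=6$ the only possibility $[K:F']=6$ forces $F'=\mathbb{Q}$, hence $\alpha_{2,K}\in\mathbb{Q}$, contradicting the Minkowski basis property $1=\alpha_{1,K}$. Thus $Z_{\square}=\emptyset$ in this range.

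For $n\geq 8$, every $\bfz\in Z_{\square}$ lies in one of the finitely many (in fact $O_{n}(1)$) sublattices $L_{F'}:=\{\bfz\in\mathbb{Z}^{n-2}:\gamma(\bfz)\in F'\}$ indexed by subfields $F\subseteq F'\subsetneq K$ with $[K:F']$ even and $\geq 4$. A dimension count using $1,\alpha_{2,K}\in F'$ (so that $\mathrm{span}_{\mathbb{Q}}(1,\alpha_{2,K})+\mathrm{span}_{\mathbb{Q}}(\alpha_{3,K},\ldots,\alpha_{n,K})=K$) shows that $L_{F'}$ has rank $m'-2$, where $m':=[F':\mathbb{Q}]\leq n/4$. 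Since $\mathcal{C}$ is contained in a cube of side $O(D_{K}^{1/n})$, the standard lattice-point estimate gives $|L_{F'}\cap\mathcal{C}|\ll_{n}D_{K}^{(m'-2)/n}\leq D_{K}^{1/4-2/n}$, and summing over the $O_{n}(1)$ subfields yields $|\mathcal{C}\cap Z_{\square}|\ll_{n}D_{K}^{1/4-2/n}$. Since $F_{\bfz}$ is a perfect square for $\bfz\in Z_{\square}$, the inner sum equals $|\mathcal{B}\cap\mathbb{Z}^{2}|\asymp D_{K}^{2/n-\delta_{2,K}}$, and multiplying gives a total of $\ll D_{K}^{1/4-\delta_{2,K}}\leq D_{K}^{1/4+1/n+\varepsilon}$, as required.
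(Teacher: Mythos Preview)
Your argument is correct and in fact yields a sharper bound than the paper's. The two proofs diverge in how they exploit the square condition. The paper specializes $X_{2}$ to an integer $x_{2}$, observes that $F_{\bfz}(X_{1},x_{2})$ is then a square one-variable polynomial, and invokes \cite[Lemma~4.1]{BSTTZ20} as a black box to bound the number of admissible $(x_{2},\bfz)$ by $D_{K}^{1/4+\varepsilon}$; summing trivially over $x_{1}$ produces the stated $D_{K}^{1/4+1/n+\varepsilon}$. Your route instead keeps both variables and characterises $Z_{\square}$ intrinsically: $F_{\bfz}$ is a square precisely when $\gamma(\bfz)$ lies in a subfield $F'\supseteq\mathbb{Q}(\alpha_{2,K})$ of index $\geq 4$ in $K$. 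The key gain is that your subfield $F'$ is forced to contain $\alpha_{2,K}$, so the associated sublattice $L_{F'}\subset\mathbb{Z}^{n-2}$ has rank at most $n/4-2$ rather than $n/4-1$; together with the coordinate-projection bound $|L_{F'}\cap[-R,R]^{n-2}|\leq(2R+1)^{\mathrm{rank}\,L_{F'}}$ (valid since $L_{F'}\subset\mathbb{Z}^{n-2}$) and the fact that $K$ has $O_{n}(1)$ subfields, this gives $|Z_{\square}\cap\mathcal{C}|\ll_{n}D_{K}^{1/4-2/n}$ and a final estimate of $D_{K}^{1/4-\delta_{2,K}}$, strictly better than the paper's bound. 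The trade-off is that the paper's reduction to the one-variable lemma is shorter to write and reuses existing machinery, whereas your direct algebraic analysis is self-contained and avoids the loss of a factor $D_{K}^{1/n}$ from the trivial $x_{1}$-sum.
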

\begin{proof}
If the polynomial $F_{\bfz}(X_{1},X_{2})$ is a square, then $F_{\bfz}(X_{1},x_{2})=N_{K\mathbb{Q}}(X_{1}+x_{2}\alpha_{2,K}+z_{3}x_{3}+\cdots+z_{n}\alpha_{n,K})$ is a square for every value of $x_{2}\in\mathbb{Z}$. On the other hand, since we are assuming that $K$ does not contain any subfield of index $2$, one can find $(x_{2},\bfz)$ such that $F_{\bfz}(X_{1},x_{2})=N_{K\mathbb{Q}}(X_{1}+x_{2}\alpha_{2,K}+z_{3}x_{3}+\cdots+z_{n}\alpha_{n,K})$ is a square only if there exists $F\subset K$ such that $[K:F]>2$ even (see \cite[proof of Lemma 4.1]{BSTTZ20}). Hence, for $n=5,6,7$ the set $Z_{\square}$ is actually empty. Let us assume now $n\geq 8$. We start by writing
\[
\begin{split}
\sum_{\substack{\bfz\in\mathcal{C}\cap Z_{\square}}}\sum_{\substack{(x_{1},x_{2})\in\mathcal{B}\cap\mathbb{Z}^{2}\\F_{\bfz}(x_{1},x_{2})=\square}}1&=\sum_{\substack{\bfz\in\mathcal{C}\cap\mathbb{Z}^{n-2}\\ F_{\bfz}(X_{1},X_{2})=\square}}\sum_{\substack{(x_{1},x_{2})\in\mathcal{B}\cap\mathbb{Z}^{2}\\F_{\bfz}(x_{1},x_{2})=\square}}1\\&\leq \sum_{\substack{(x_{2},\bfz)\in\mathbb{Z}^{n-1}\\(x_{2},\bfz)\in[-D_{K}^{\frac{1}{n}+\delta_{2,K}},D_{K}^{\frac{1}{n}+\delta_{2,K}}]\times \mathcal{C}\\ F_{\bfz}(X_{1},x_{2})=\square}}\quad\sum_{\substack{|x_{1}|\leq D_{K}^{\frac{1}{n}}\\F_{\bfz}(x_{1},x_{2})=\square}}1.
\end{split}
\]
An application of \cite[Lemma 4.1]{BSTTZ20} leads to
\[
\begin{split}
 \sum_{\substack{(x_{2},\bfz)\in\mathbb{Z}^{n-1}\\(x_{2},\bfz)\in[-D_{K}^{\frac{1}{n}+\delta_{2,K}},D_{K}^{\frac{1}{n}+\delta_{2,K}}]\times \mathcal{C}\\ F_{\bfz}(X_{1},x_{2})=\square}}1 \ll_{n,\varepsilon} D_{K}^{\frac{1}{4}+\varepsilon}
\end{split}
\]
Hence
\[
\sum_{\substack{(x_{2},\bfz)\in\mathbb{Z}^{n-1}\\(x_{2},\bfz)\in[-D_{K}^{\frac{1}{n}+\delta_{2,K}},D_{K}^{\frac{1}{n}+\delta_{2,K}}]\times \mathcal{C}\\ F_{\bfz}(X_{1},x_{2})=\square}}\quad\sum_{\substack{|x_{1}|\leq D_{K}^{\frac{1}{n}}\\F_{\bfz}(x_{1},x_{2})=\square}}1\ll_{n,\varepsilon} D_{K}^{\frac{1}{4}+\frac{1}{n}+\varepsilon},
\]
as claimed.
\end{proof}
 \subsubsection{The contribution of $Z_{F}$}
 We now bound the contribution of the vectors in $\mathcal{C}\cap Z_{F}$. Since we have
 \[
 \begin{split}
 F_{\bfz}[X_{1},X_{2}]&=\quad\prod_{i=1}^{n}(X_{1}+\alpha_{2,K}X_{2}+z)\text{ over }\overline{\mathbb{Q}}\\&=\prod_{\sigma\in\Emb (K)}(X_{1}+\sigma(\alpha_{2,K})X_{2}+\sigma(z))\text{ over }\overline{\mathbb{Q}},
 \end{split}
 \]
the polynomial $F_{\bfz}[X_{1},X_{2}]$ satisfies the hypothesis of Proposition $\ref{prop : Anumber}$ over $\overline{\mathbb{Q}}$, if
\begin{enumerate}
    \item[$a)$] the element $z$ does not belong in any intermediate field $\mathbb{Q}\subset F\subset K$ (this will also guarantees that $F_{\bfz}$ is square-free);
    \item[$b)$] for every $\sigma, \tau\in\Emb(K)$, with $\sigma\neq \tau$, and $\tau(\alpha_{2,K})\neq \alpha$, one has
    \[
    \sigma (z)-z\neq\frac{\sigma(\alpha_{2,K})-\alpha_{2,K}}{\tau(\alpha_{2,K})-\alpha_{2,K}}(\tau(z)-z).
    \]
\end{enumerate}
Notice that, since $\alpha_{2}\in K\setminus\mathbb{Q}$, the polynomial $F$ always satisfies condition $(i)$ of Proposition $\ref{prop : Anumber}$.
We start with the following:
\begin{lem}
Let $\lambda\in K\setminus\{0,1\}$, and let $\sigma,\tau\in\Emb (K/\mathbb{Q})$ such that $\sigma\neq\tau$. Let $V_{\sigma,\tau,\lambda}$ be the $\mathbb{Q}$-vector space defined as
\[
V_{\sigma,\tau,\lambda}=\left\{x\in K:x=\frac{1}{1-\lambda}\sigma(x)-\frac{\lambda}{1-\lambda}\tau(x)\right\}.
\]
Then $\dim_{\mathbb{Q}}(V_{\sigma,\tau,\lambda})\leq\frac{2n}{3}$.
\label{lem : subspace}
\end{lem}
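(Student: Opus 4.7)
The plan is to first dispose of the degenerate cases. If $\sigma=\mathrm{id}$, the defining relation collapses, using $\lambda\neq 0$, to $\tau(x)=x$, so $V_{\sigma,\tau,\lambda}$ equals $\{x\in K:\tau(x)=x\}$, a proper subfield of $K$ since $\tau\neq\sigma=\mathrm{id}$; in particular $\dim_{\mathbb{Q}}V_{\sigma,\tau,\lambda}\leq n/2$. The case $\tau=\mathrm{id}$ is symmetric. From now on assume $\sigma,\tau\neq\mathrm{id}$. The key step is to establish the multiplicative identity
\[
(x-\tau(x))(y-\tau(y))=0\qquad\text{whenever }x,y,xy\in V_{\sigma,\tau,\lambda}.
\]
To prove it I would rewrite the defining condition as $\sigma(x)=(1-\lambda)x+\lambda\tau(x)$, compute $\sigma(xy)=\sigma(x)\sigma(y)$ by multiplying the analogous expansions for $x$ and $y$, and equate with $\sigma(xy)=(1-\lambda)xy+\lambda\tau(x)\tau(y)$ coming from $xy\in V_{\sigma,\tau,\lambda}$. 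Collecting terms, the difference of the two sides factors as $-\lambda(1-\lambda)(x-\tau(x))(y-\tau(y))$, and the identity follows since $\lambda(1-\lambda)\neq 0$.

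Next introduce the subfield $K_{\sigma,\tau}=\{x\in K:\sigma(x)=\tau(x)=x\}$. Since $\mathrm{id},\sigma,\tau$ are three pairwise distinct embeddings $K\hookrightarrow\overline{\mathbb{Q}}$ fixing $K_{\sigma,\tau}$ pointwise, one has $[K:K_{\sigma,\tau}]\geq 3$ and hence $\dim_{\mathbb{Q}}K_{\sigma,\tau}\leq n/3$. Substituting $\tau(x)=x$ into the defining relation and using $\lambda\neq 0$ forces $\sigma(x)=x$ as well, yielding the clean equality $V_{\sigma,\tau,\lambda}\cap\{x\in K:\tau(x)=x\}=K_{\sigma,\tau}$. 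If $V_{\sigma,\tau,\lambda}\subseteq\{x\in K:\tau(x)=x\}$, then $\dim_{\mathbb{Q}}V_{\sigma,\tau,\lambda}\leq n/3$ and we are done. Otherwise choose $x_{0}\in V_{\sigma,\tau,\lambda}$ with $\tau(x_{0})\neq x_{0}$; the key identity applied to $x_{0}$ and any $y\in V_{\sigma,\tau,\lambda}$ with $x_{0}y\in V_{\sigma,\tau,\lambda}$ forces $\tau(y)=y$, hence $y\in K_{\sigma,\tau}$. Consequently $x_{0}V_{\sigma,\tau,\lambda}\cap V_{\sigma,\tau,\lambda}\subseteq x_{0}K_{\sigma,\tau}$, so $\dim_{\mathbb{Q}}(x_{0}V_{\sigma,\tau,\lambda}\cap V_{\sigma,\tau,\lambda})\leq n/3$. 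Since $x_{0}V_{\sigma,\tau,\lambda}+V_{\sigma,\tau,\lambda}\subseteq K$ has $\mathbb{Q}$-dimension at most $n$, the identity $\dim(x_{0}V+V)=2\dim V-\dim(x_{0}V\cap V)$ gives $2\dim_{\mathbb{Q}}V_{\sigma,\tau,\lambda}\leq n+n/3$, i.e.\ the claimed bound $\dim_{\mathbb{Q}}V_{\sigma,\tau,\lambda}\leq 2n/3$.

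The main obstacle is the multiplicative identity itself: the key observation enabling the cancellations is that, by the defining relation, $\sigma(\cdot)-\mathrm{id}$ acts as a $\lambda$-multiple of $\tau(\cdot)-\mathrm{id}$ on $V_{\sigma,\tau,\lambda}$, so that when one expands $\sigma(xy)-(1-\lambda)xy-\lambda\tau(x)\tau(y)$ via the Leibniz-type rule, the cross terms telescope into a factorizable expression. The rest is a dimension count inside $K$; the trick of comparing $x_{0}V_{\sigma,\tau,\lambda}$ with $V_{\sigma,\tau,\lambda}$ (rather than trying to make $V_{\sigma,\tau,\lambda}$ itself multiplicatively closed) is what lets us leverage the identity in spite of $V_{\sigma,\tau,\lambda}$ being only a $\mathbb{Q}$-subspace, and the factor $3$ in $[K:K_{\sigma,\tau}]\geq 3$ — ultimately the source of the $2n/3$ — comes precisely from the three distinct embeddings $\mathrm{id},\sigma,\tau$.
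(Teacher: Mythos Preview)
Your proof is correct and takes a genuinely different route from the paper's. The paper argues by contradiction: it considers the $\mathbb{Q}$-linear map $\varphi(x)=x-\frac{1}{1-\lambda}\sigma(x)+\frac{\lambda}{1-\lambda}\tau(x)$ from $K$ into the Galois closure $L$, and notes that if $d=\dim\ker\varphi>2n/3$ then $\dim\im\varphi=n-d<n/3$. A pigeonhole construction then produces embeddings $\rho_{1}=\Id,\rho_{2},\dots,\rho_{n-d+1}$ of $K$ such that each $\rho_{i}$ is distinct from every $\rho_{j},\tilde\rho_{j}\circ\sigma,\tilde\rho_{j}\circ\tau$ with $j<i$; since the $\tilde\rho_{i}\circ\varphi$ take values in an $(n-d)$-dimensional space, some nontrivial linear combination vanishes, and unwinding this yields a nontrivial linear relation among distinct characters, contradicting Artin--Dedekind independence. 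Your argument instead exploits the multiplicativity of $\sigma$ directly: from $\sigma(x)=(1-\lambda)x+\lambda\tau(x)$ on $V_{\sigma,\tau,\lambda}$ you extract the identity $(x-\tau(x))(y-\tau(y))=0$ whenever $x,y,xy\in V_{\sigma,\tau,\lambda}$, and then run a clean dimension count via $x_{0}V_{\sigma,\tau,\lambda}+V_{\sigma,\tau,\lambda}$ and $x_{0}V_{\sigma,\tau,\lambda}\cap V_{\sigma,\tau,\lambda}$. This is more elementary and entirely self-contained---no Galois closure, no independence of characters---and it makes the appearance of the subfield $K_{\sigma,\tau}$ and the constant $2/3=1-1/3$ completely transparent. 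The paper's approach, by contrast, would generalize more readily to defining relations involving more than three embeddings.
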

\begin{proof}
Let $L$ be the Galois closure of $K$, observe that we can extend any embedding $\Emb(K/\mathbb{Q})$ to an element of $\Gal(L/\mathbb{Q})$: for every $\rho\in \Emb(K/\mathbb{Q})$, we will denote $\tilde{\rho}$ one of its extension to the whole $L$. Consider the linear map
\[
\varphi : K\rightarrow L,
\]
defined as $\varphi (x)=x-\frac{1}{1-\lambda}\sigma(x)+\frac{\lambda}{1-\lambda}\tau(x)$. Then $V_{\sigma,\tau,\lambda}=\text{Ker}(\varphi)$. Moreover, denote by $W_{\sigma,\tau,\lambda}=\text{Im}(\varphi)$. Let $d=\dim V_{\sigma,\tau,\lambda}$. Then $\dim_{\mathbb{Q}} W_{\sigma,\tau,\lambda}=n-d$, and let $\{h_{1},...,h_{n-d}\}$ be a basis of for $W_{\sigma,\tau,\lambda}$. Let $\rho_{1},...,\rho_{n-d+1}\in\Emb(K/\mathbb{Q})$ distinct morphisms and consider the system
\[
\begin{cases}
    X_{1}\tilde{\rho}_{1}(h_{1})+\cdots +X_{n-d+1}\tilde{\rho}_{n-d+1}(h_{1})=0\\
    X_{1}\tilde{\rho}_{1}(h_{2})+\cdots +X_{n-d+1}\tilde{\rho}_{n-d+1}(h_{2})=0\\
    \qquad\vdots\qquad\qquad\qquad\qquad\vdots\\X_{1}\tilde{\rho}_{1}(h_{n-d})+\cdots +X_{n-d+1}\tilde{\rho}_{n-d+1}(h_{n-d})=0\\
    \end{cases}
\]
Since this system has $n-d$ equations in $n-d+1$ variables, one of the columns has to be a linear combination of the others. Hence, without loss of generality:
\[\begin{pmatrix}
    \tilde{\rho}_{n-d+1}(h_{1})\\ \tilde{\rho}_{n-d+1}(h_{2})\\\vdots\\ \tilde{\rho}_{n-d+1}(h_{n-d})\end{pmatrix}=\mu_{1}\begin{pmatrix}\tilde{\rho}_{1}(h_{1})\\ \tilde{\rho}_{1}(h_{2})\\\vdots\\ \tilde{\rho}_{1}(h_{n-d})\end{pmatrix}+\cdots+\mu_{n-d}\begin{pmatrix}\tilde{\rho}_{n-d}(h_{1})\\ \tilde{\rho}_{n-d}(h_{2})\\\vdots\\ \tilde{\rho}_{n-d}(h_{n-d}),
\end{pmatrix}
\]
for some $\mu_{1},...,\mu_{n-d}$. Knowing that $h_{1},...,h_{n-d}$ is a basis for $W_{\sigma,\tau,\lambda}$ and that $\varphi$ vanishes on $V_{\sigma,\tau,\lambda}$, one has
\[
\tilde{\rho}_{n-d+1}\circ\varphi=\mu_{1}(\tilde{\rho}_{1}\circ\varphi)+\cdots \mu_{n-d}(\tilde{\rho}_{n-d}\circ\varphi).
\]
By contradiction, assume that $d>\frac{2n}{3}$. Then, $n-d < \frac{n}{3}$. We claim that for every $k< \frac{n}{3}$, we can find $A_{k}=\{\text{Id}=\rho_{1},\rho_{2},...,\rho_{k+1}\}\subset\Emb(K/\mathbb{Q})$, such that:
\begin{equation}
\rho_{i}\neq\rho_{j},\quad \rho_{i}\neq\tilde{\rho}_{j}\circ\sigma, \quad \rho_{i}\neq\tilde{\rho}_{j}\circ\tau\quad\text{for any }j<i.
\label{eq : i,j}
\end{equation}
We do this by induction, when $k=0$, we can take $\{\text{Id}\}$. Assume the claim is true for $k-1$, then the set 
\[
B_{k-1}=\bigcup_{\rho\in A_{k-1}}\{\rho,\tilde{\rho}\circ\sigma,\tilde{\rho}\circ\tau\},
\]
has cardinality $\#B_{k-1}\leq 3k<n$ (we are assuming $k<\frac{n}{3}$). Since $\Emb(K/\mathbb{Q})=n$, we can find $\rho_{k+1}\in \Emb(K/\mathbb{Q})\setminus A_{k}$ which proves the claim. Since  $n-d<\frac{n}{3}$, the claim implies that there exists $A_{n-d}=\{\text{Id}=\rho_{1},...,\rho_{n-d+1}\}\subset\Emb(K/\mathbb{Q})$ such that for any $i=1,..,n-d+1$
\[
\rho_{i}\neq\rho_{j},\quad \rho_{i}\neq\tilde{\rho}_{j}\circ\sigma, \quad \rho_{i}\neq\tilde{\rho}_{j}\circ\tau\quad\text{for any }j<i.
\]
Since we have $n-d+1$ endomorphisms, by the previous part (reordering the morphisms if necessary), we have that
\[
\tilde{\rho}_{n-d+1}\circ\varphi=\mu_{1}(\tilde{\rho}_{1}\circ\varphi)+\cdots \mu_{n-d}(\tilde{\rho}_{n-d}\circ\varphi).
\]
On the other hand, using the definition of $\varphi$, one would get that
\begin{equation}
\begin{split}
\rho_{n-d+1}&=-\frac{1}{1-\rho_{n-d+1}(\lambda)}(\tilde{\rho}_{n-d+1}\circ\sigma)+\frac{\rho_{n-d+1}(\lambda)}{1-\rho_{n-d+1}(\lambda)}(\tilde{\rho}_{i}\circ\tau)\\&\quad+\sum_{i=1}^{n-d}\mu_{i}\left(\rho_{i}-\frac{1}{1-\rho_{i}(\lambda)}(\tilde{\rho}_{i}\circ\sigma)+\frac{\rho_{i}(\lambda)}{1-\rho_{i}(\lambda)}(\tilde{\rho}_{i}\circ\tau)\right),
\label{eq : char}
\end{split}
\end{equation}
which would contradict the Artin-Dedekind's linear independence of characters (\cite[Page $76$, Lemma $76$]{Rot98}) since, by construction, the character $\rho_{n-d+1}$ does not appear in the right-hand side of $(\ref{eq : char})$. 
\end{proof}
We recall that, for every $\bfz\in\mathbb{Z}^{n-2}$, we have $z=z_{3}\alpha_{3,K}+\cdots+z_{n}\alpha_{n,K}\in K$. Moreover, in what follow we denote
\[
E_{K,3}= \sum_{3\leq j\leq\frac{2n}{3}}\left(\frac{1}{n}-\delta_{j,K}\right).
\]
\begin{cor}
Let $K/\mathbb{Q}$ be a number field of degree $n\geq 5$ which does not contain any subfield of index $2$. Let $\sigma,\tau\in\Emb (K/\mathbb{Q})$ be two distinct element in $\Emb(K/\mathbb{Q})$ such that $\tau(\alpha_{2,K})\neq\alpha_{2,K}$. Then
\[
\#\left\{\bfz\in\mathcal{C}\cap \mathbb{Z}^{n-2}:\sigma(z)-z= \frac{\sigma(\alpha_{2,K})-\alpha_{2,K}}{\tau(\alpha_{2,K})-\alpha_{2,K}}\cdot(\tau(z)-z)\right\}\leq D_{K}^{E_{K,3}}
\]
\label{cor : subspaceF}
\end{cor}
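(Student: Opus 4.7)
The plan is to translate the linear condition on $\bfz$ into the containment $z \in V_{\sigma,\tau,\lambda}$ for the subspace of Lemma \ref{lem : subspace}, bound its dimension, and then count integer points in $\mathcal{C}$ whose image in $K$ lands in that subspace via a coordinate-projection argument.

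First, set $\lambda = \frac{\sigma(\alpha_{2,K}) - \alpha_{2,K}}{\tau(\alpha_{2,K}) - \alpha_{2,K}}$, which is well-defined in the Galois closure of $K$ by the hypothesis $\tau(\alpha_{2,K}) \neq \alpha_{2,K}$. In the generic case $\lambda \notin \{0,1\}$ (and $\sigma \neq \mathrm{Id}$, which is implicit in the intended range of the hypothesis), the equation $\sigma(z) - z = \lambda(\tau(z) - z)$ rearranges to
\[
z = \frac{1}{1-\lambda}\sigma(z) - \frac{\lambda}{1-\lambda}\tau(z),
\]
so $z \in V_{\sigma,\tau,\lambda}$, and Lemma \ref{lem : subspace} gives $\dim_{\mathbb{Q}} V_{\sigma,\tau,\lambda} \leq 2n/3$. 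The key observation is that $V_{\sigma,\tau,\lambda}$ contains both $1$ (trivially) and $\alpha_{2,K}$: the latter reduces to the identity $(1-\lambda)\alpha_{2,K} = \sigma(\alpha_{2,K}) - \lambda\tau(\alpha_{2,K})$, which is just the defining relation for $\lambda$ multiplied out. Setting $W := \mathrm{Span}_{\mathbb{Q}}(\alpha_{3,K},\ldots,\alpha_{n,K})$, the direct sum decomposition $K = \mathrm{Span}(1,\alpha_{2,K}) \oplus W$ shows that the projection $\pi_W \colon K \to W$ restricts to a surjection $V_{\sigma,\tau,\lambda} \to V_{\sigma,\tau,\lambda} \cap W$ with kernel $\mathrm{Span}(1,\alpha_{2,K})$; hence $\dim_{\mathbb{Q}}(V_{\sigma,\tau,\lambda} \cap W) \leq 2n/3 - 2$.

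Consequently, the set $\Lambda := \{\bfz \in \mathbb{Z}^{n-2} : z \in V_{\sigma,\tau,\lambda}\}$ is a sublattice of $\mathbb{Z}^{n-2}$ of rank $r \leq \lfloor 2n/3 \rfloor - 2$. To count $\#(\Lambda \cap \mathcal{C})$, I would choose $r$ indices $i_1 < \cdots < i_r$ in $\{3,\ldots,n\}$ for which the coordinate projection $\pi_{i_1,\ldots,i_r}$ is injective on $\Lambda$ (such indices exist by selecting a basis of $\Lambda$ and a non-vanishing $r \times r$ minor of its coordinate matrix), and then estimate
\[
\#(\Lambda \cap \mathcal{C}) \leq \#\bigl(\mathbb{Z}^r \cap \pi_{i_1,\ldots,i_r}(\mathcal{C})\bigr) \ll_n \prod_{k=1}^{r} D_K^{1/n - \delta_{i_k,K}}.
\]
Since $\delta_{j,K}$ is non-decreasing in $j$, this product is maximized over all $r$-subsets by taking $(i_1,\ldots,i_r) = (3,\ldots,r+2)$; using $r \leq \lfloor 2n/3 \rfloor - 2$ one obtains the claimed bound $\#(\Lambda \cap \mathcal{C}) \ll_n D_K^{E_{K,3}}$.

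Finally, the degenerate cases $\lambda \in \{0,1\}$ force $z$ into a proper subfield $F \subsetneq K$, namely the fixed field $K^{\sigma}$ when $\lambda = 0$, and the equalizer $\{x \in K : \sigma(x) = \tau(x)\}$ when $\lambda = 1$ (which is a subfield by multiplicativity of $\sigma,\tau$). In both cases $\alpha_{2,K} \in F$, and the no-index-$2$-subfield hypothesis gives $[K:F] \geq 3$; so $\dim_{\mathbb{Q}} F \leq n/3$ and $\dim_{\mathbb{Q}}(F \cap W) \leq n/3 - 2$, leading via the same projection argument to a bound strictly smaller than $D_K^{E_{K,3}}$ for $n \geq 6$ (and for $n = 5$, $F = \mathbb{Q}$ forces $\bfz = 0$). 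The main subtle point of the argument is thus the verification that $\alpha_{2,K} \in V_{\sigma,\tau,\lambda}$: this extra dimension drop is exactly what upgrades Lemma \ref{lem : subspace}'s bound $\dim V \leq 2n/3$ to the sharper $\dim(V \cap W) \leq 2n/3 - 2$ needed to match the exponent $E_{K,3}$ precisely.
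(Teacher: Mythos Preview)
Your approach mirrors the paper's: identify the constraint as membership in $V_{\sigma,\tau,\lambda}$, invoke Lemma~\ref{lem : subspace} for the generic $\lambda$, use that $1,\alpha_{2,K}\in V_{\sigma,\tau,\lambda}$ to drop the dimension of the relevant slice in $W=\mathrm{Span}_{\mathbb Q}(\alpha_{3,K},\dots,\alpha_{n,K})$ to at most $\lfloor 2n/3\rfloor-2$, and then count lattice points by projecting onto an injective set of coordinates (the paper leaves this last step implicit). Your treatment of the degenerate cases is in fact slightly more complete than the paper's: you handle $\lambda=0$ explicitly, which the paper's proof omits, and you note that $\alpha_{2,K}$ also lies in the fixed subfield in both degenerate cases, whereas the paper only uses $1\in V_{\sigma,\tau}$ there.

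One small correction: the sentence ``$\pi_W$ restricts to a surjection $V_{\sigma,\tau,\lambda}\to V_{\sigma,\tau,\lambda}\cap W$ with kernel $\mathrm{Span}(1,\alpha_{2,K})$'' is not right as written, since the image $\pi_W(V_{\sigma,\tau,\lambda})$ need not coincide with $V_{\sigma,\tau,\lambda}\cap W$. The inequality $\dim_{\mathbb Q}(V_{\sigma,\tau,\lambda}\cap W)\le \dim_{\mathbb Q} V_{\sigma,\tau,\lambda}-2$ that you need follows instead from the observation that $V_{\sigma,\tau,\lambda}\cap W$ and $\mathrm{Span}(1,\alpha_{2,K})$ are subspaces of $V_{\sigma,\tau,\lambda}$ with trivial intersection. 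With that fix, your argument is correct and essentially identical to the paper's.
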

\begin{proof}
Consider the vector space
\[
V_{\sigma,\tau}=\left\{x\in K:\sigma(x)-x= \frac{\sigma(\alpha_{2,K})-\alpha_{2,K}}{\tau(\alpha_{2,K})-\alpha_{2,K}}\cdot(\tau(x)-x)\right\}.
\]
We first handle the case when $\frac{\sigma(\alpha_{2,K})-\alpha_{2,K}}{\tau(\alpha_{2,K})-\alpha_{2,K}}=1$. In this case, $x$ satisfies $(\tau^{-1}\circ\sigma)(x)=x$, i.e. $x\in K^{\tau^{-1}\circ\sigma}$ which is a $\mathbb{Q}$-vectorspace of dimension at most $\frac{n}{3}$ (since we are assuming that $K$ does not contain any subfield of index $2$). Since $1\in V_{\sigma,\tau}$, we have
\[
\dim\left(\left\{(z_{3},...,z_{n}):z_{3}\alpha_{3,K}+\cdots+z_{n}\alpha_{n,K}\in V_{\sigma,\tau}\right\}\right)\leq \frac{n}{3}-1.
\]
Thus,
\[
\#\{\bfz\in\mathbb{C}\cap\mathbb{Z}^{n-2}:z\in V_{\sigma,\tau}\}\leq D_{K}^{\sum_{3\leq j\leq\frac{n}{3}+1}\left(\frac{1}{n}-\delta_{j,K}\right)}\leq D_{K}^{E_{K,3}},
\]
since when $n\geq 5$, $\frac{n}{3}+1\leq \frac{2n}{3}$. We now assume $\frac{\sigma(\alpha_{2,K})-\alpha_{2,K}}{\tau(\alpha_{2,K})-\alpha_{2,K}}\neq 1 $. In this case, we can apply Lemma $\ref{lem : subspace}$ with $\lambda=\frac{\sigma(\alpha_{2,K})-\alpha_{2,K}}{\tau(\alpha_{2,K})-\alpha_{2,K}}$, getting $\dim_{\mathbb{Q}} V\leq \frac{2n}{3}$. Moreover, one has that $1,\alpha_{2,K}\in V_{\sigma,\tau}$, thus
\[
\dim\left(\left\{(x_{3},...,x_{n}):x_{3}\alpha_{3,K}+\cdots+x_{n}\alpha_{n,K}\in V\right\}\right)=\frac{2n}{3}-2.
\]
Thus, it follows that
\[
\#(V_{\sigma,\tau}\cap\mathcal{C}\cap\mathbb{Z}^{n-2})\leq D_{K}^{E_{K,3}},
\]
as claimed.
\end{proof}
\begin{lem}
    Some notation as Corollary $\ref{cor : serve}$. Then
    \[
    \#\{\bfz\in\mathcal{C}\cap\mathbb{Z}^{n-2}:\sigma(z)=z,\text{ for some }\sigma\in\Emb(K/\mathbb{Q})\setminus\{\Id\}\}\ll D_{K}^{E_{K,3}}
    \]
    \label{lem : subfield}
\end{lem}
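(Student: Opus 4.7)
The plan is to mirror the structure of Corollary~\ref{cor : subspaceF}. For each fixed $\sigma \in \Emb(K/\mathbb{Q}) \setminus \{\Id\}$, I would introduce
\[
V_\sigma := \{x \in K : \sigma(x) = x\}.
\]
Since $\sigma$ is a $\mathbb{Q}$-algebra homomorphism, $V_\sigma$ is closed under sum, product, and inversion of nonzero elements, so it is a subfield of $K$ containing $\mathbb{Q}$. Because $\sigma \neq \Id$ one has $V_\sigma \subsetneq K$, and since $K$ is assumed to admit no subfield of index $2$, the multiplicativity of degrees $[K:\mathbb{Q}] = [K:V_\sigma]\cdot[V_\sigma:\mathbb{Q}]$ forces $[K:V_\sigma] \geq 3$, hence $\dim_\mathbb{Q} V_\sigma \leq n/3$.

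Next, set $U := \operatorname{span}_\mathbb{Q}(\alpha_{3,K}, \ldots, \alpha_{n,K})$, so that the condition $\sigma(z) = z$ for $z = z_3 \alpha_{3,K} + \cdots + z_n \alpha_{n,K}$ is equivalent to $z \in V_\sigma \cap U$. Let $W_\sigma \subset \mathbb{Q}^{n-2}$ be the preimage of $V_\sigma \cap U$ under the linear isomorphism $(z_3, \ldots, z_n) \mapsto z_3 \alpha_{3,K} + \cdots + z_n \alpha_{n,K}$. The key observation is that $1 \in V_\sigma \setminus U$, so the composition $V_\sigma \hookrightarrow K \twoheadrightarrow K/U$ has nontrivial image; consequently
\[
\dim W_\sigma \;=\; \dim (V_\sigma \cap U) \;\leq\; \dim V_\sigma - 1 \;\leq\; n/3 - 1.
\]

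To finish I would count integer points in $W_\sigma \cap \mathcal{C}$. Writing $d = \dim W_\sigma$, there exists a subset $S \subset \{3, \ldots, n\}$ of size $d$ such that the projection of $W_\sigma$ onto the $S$-coordinates is bijective; integer points of $W_\sigma \cap \mathcal{C}$ then inject into integer points of a box with side-lengths $2 D_K^{1/n - \delta_{j,K}}$, $j \in S$. Since $D_K^{1/n - \delta_{j,K}}$ is non-increasing in $j$, one has $\prod_{j \in S} D_K^{1/n - \delta_{j,K}} \leq \prod_{j=3}^{d+2} D_K^{1/n - \delta_{j,K}}$, and using $d+2 \leq n/3 + 1 \leq 2n/3$ (valid for $n \geq 3$) together with the non-negativity of $1/n - \delta_{j,K}$ (from Proposition~2.1$(i)$) yields
\[
\#(W_\sigma \cap \mathcal{C} \cap \mathbb{Z}^{n-2}) \;\ll_n\; \prod_{j=3}^{d+2} D_K^{\frac{1}{n} - \delta_{j,K}} \;\leq\; D_K^{E_{K,3}}.
\]
Summing over the $n-1$ non-identity embeddings then produces the claimed bound.

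The delicate step is the one-dimension saving $\dim W_\sigma \leq \dim V_\sigma - 1$: without it the resulting exponent would be $\sum_{j=3}^{d+3}(1/n - \delta_{j,K})$, which for $n$ divisible by $3$ can already exceed $E_{K,3}$. The remaining ingredients (the existence of an injective coordinate projection onto $d$ coordinates, and the passage from lattice-point counts to box-edge products) are routine geometry-of-numbers manipulations contributing only $O_n(1)$ factors.
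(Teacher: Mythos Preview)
Your proposal is correct and follows essentially the same route as the paper: fix $\sigma$, observe that $V_\sigma=K^\sigma$ is a proper subfield of dimension at most $n/3$ (using the no-index-$2$-subfield hypothesis), drop one dimension because $1\in V_\sigma\setminus U$, and then bound the lattice points in $\mathcal{C}$ by projecting onto at most $n/3-1$ coordinates and invoking $n/3+1\le 2n/3$. Your write-up is in fact more explicit than the paper's about the projection step and the monotonicity of the box side-lengths.
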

\begin{proof}
One has that
    \[
    \#\{x\in K:\sigma(x)=x,\text{ for some }\sigma\in\Emb(K/\mathbb{Q})\setminus\{\Id\}\}=\bigcup_{\substack{\sigma\in\Emb(K/\mathbb{Q})\\\sigma\neq \text{Id}}}K^{\sigma},
    \]
    where $K^{\sigma}$ is the field of all the element in $K$ which are fixed by $\sigma$. Since $[K:K^{\sigma}]\geq 3$ (we are assuming that $K$ does not contain any intermediate field of index $2$), then $K^{\sigma}$ is a $\mathbb{Q}$-vector field of dimension $\leq\frac{n}{3}$, for every $\sigma\in \Emb(K/\mathbb{Q})\setminus\{\Id\}$. Moreover, since $1\in K^{\sigma}$
    \[
\dim\left(\left\{(z_{3},...,z_{n}):z_{3}\alpha_{3,K}+\cdots+z_{n}\alpha_{n,K}\in K^{\sigma}\right\}\right)\leq\frac{n}{3}-1.
\]
Hence,
\[
\begin{split}
    \#\{\bfz\in\mathcal{C}\cap\mathbb{Z}^{n-2}:\sigma(z)=z,\text{ for some }\sigma\in\Emb(K/\mathbb{Q})\setminus\{\Id\}\}&\ll D_{K}^{\sum_{3\leq j\leq\frac{n}{3}+1}\left(\frac{1}{n}-\delta_{j,K}\right)}\\&\ll D_{K}^{\sum_{3\leq j\leq\frac{2n}{3}}\left(\frac{1}{n}-\delta_{j,K}\right)},
    \end{split}
    \]
since when $n\geq 5$, $\frac{n}{3}+1\leq \frac{2n}{3}$
\end{proof}
We are finally ready to prove
\begin{cor}
   Assume that $K/\mathbb{Q}$ does not contain a subfield of index $2$, and $\delta_{2,K}\geq\frac{1}{4n}$. Then
    \[
    \sum_{\substack{\bfz\in\mathcal{C}\\ \bfz\in Z_{F}\setminus Z_{\square} }}\sum_{\substack{(x_{1},x_{2})\in\mathcal{B}\cap\mathbb{Z}^{2}\\F_{\bfz}(x_{1},x_{2})=\square}}1\ll_{n,\varepsilon}  D_{K}^{\frac{1}{2}-\frac{1}{2n}-\frac{1}{7n}}
    \]
    \label{cor : Ffail}
\end{cor}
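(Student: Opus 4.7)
The plan is to bound the double sum as
\[
\sum_{\bfz \in \mathcal{C} \cap (Z_F \setminus Z_\square)}\sum_{\substack{(x_1,x_2) \in \mathcal{B} \\ F_\bfz(x_1,x_2)=\square}} 1 \;\leq\; \#(\mathcal{C} \cap Z_F) \cdot \sup_{\bfz \in Z_F \setminus Z_\square} \#\{(x_1,x_2) \in \mathcal{B}: F_\bfz(x_1,x_2) = \square\},
\]
and then use the available structural information on $Z_F$ and on the curves $Y^2 = F_\bfz(X_1,x_2)$ to control each factor.

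For the first factor, I would split $Z_F$ into failures of condition $(ii)$ of Proposition \ref{prop : Anumber} and failures of condition $(iii)$. Corollary \ref{cor : subspaceF} handles the $(ii)$-failures for each of the $O_n(1)$ pairs of embeddings $(\sigma,\tau)$, while Lemma \ref{lem : subfield} handles the $(iii)$-failures (which correspond to $z$ being fixed by some non-trivial $\sigma \in \Emb(K/\mathbb{Q})$). Taking the union bound yields $\#(\mathcal{C} \cap Z_F) \ll_n D_K^{E_{K,3}}$.

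For the second factor, I would fix $\bfz \in Z_F \setminus Z_\square$ and then slice by $x_2 \in [-D_K^{1/n - \delta_{2,K}}, D_K^{1/n - \delta_{2,K}}] \cap \mathbb{Z}$. Since $F_\bfz(X_1,X_2)$ is not identically a square (because $\bfz \notin Z_\square$), the one-variable specialization $F_\bfz(X_1,x_2)$ is a perfect square in $X_1$ for at most $O_n(1)$ exceptional $x_2$; each exceptional $x_2$ contributes at most $O(D_K^{1/n})$ from the trivial range of $x_1$. For non-exceptional $x_2$, the Bombieri--Pila type bound (as used in \cite[Theorem 1.6]{BSTTZ20}) bounds the integer points on the geometrically integral curve $Y^2 = F_\bfz(X_1,x_2)$ by $O_{n,\varepsilon}(D_K^{1/(2n)+\varepsilon})$. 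Summing over $x_2$, the per-$\bfz$ total is $O_{n,\varepsilon}(D_K^{3/(2n) - \delta_{2,K} + \varepsilon})$, absorbing the exceptional term since $\delta_{2,K} \leq 1/(2(n-1))$.

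Multiplying the two bounds gives the estimate $O_{n,\varepsilon}(D_K^{E_{K,3} + 3/(2n) - \delta_{2,K} + \varepsilon})$, so the claim reduces to showing
\[
E_{K,3} \;\leq\; \tfrac{1}{2} - \tfrac{2}{n} + \delta_{2,K} - \tfrac{1}{7n}.
\]
The two natural bounds on $E_{K,3}$---namely $E_{K,3} \leq (2n/3 - 2)(1/n - \delta_{2,K})$, coming from $\delta_{j,K} \geq \delta_{2,K}$, and $E_{K,3} \leq 1/2 - 2/n + \delta_{2,K}$, coming from $\sum_{j \geq 2} \delta_{j,K} = 1/2$ together with $\delta_{j,K} \leq 1/n + o(1)$ for $j > 2n/3$---at best match the right-hand side up to the final $1/(7n)$ saving, and in the boundary case $\delta_{2,K} = 1/(4n)$ the analysis is tight. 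The main obstacle is therefore closing this $1/(7n)$ gap. My intended route is to refine the bound on $\#(\mathcal{C} \cap Z_F^{(iii)})$ using the tighter subfield dimension $\leq n/3 - 1$ (so this contribution is actually $D_K^{\sum_{j=3}^{n/3 + 1}(1/n - \delta_{j,K})}$, strictly smaller than $D_K^{E_{K,3}}$), combine this with the hypothesis $\delta_{2,K} \geq 1/(4n)$ to force $\sum_{j > 2n/3} \delta_{j,K}$ away from its trivial upper bound $1/3$, and if necessary exploit the algebraic structure forcing $\bfz$ into $Z_F^{(ii)}$ (collinearity of the conjugate points $(\sigma(\alpha_{2,K}), \sigma(z))$) to improve the Bombieri--Pila input on the curves $Y^2 = F_\bfz(X_1,x_2)$ below the generic $D_K^{1/(2n)+\varepsilon}$ threshold.
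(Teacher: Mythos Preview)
Your approach matches the paper's exactly up to the final inequality: both arguments factor the double sum as $\#(\mathcal{C}\cap Z_F)\cdot D_K^{3/(2n)-\delta_{2,K}+\varepsilon}$, bound $\#(\mathcal{C}\cap Z_F)\ll_n D_K^{E_{K,3}}$ via Corollary~\ref{cor : subspaceF} and Lemma~\ref{lem : subfield}, and then rewrite the exponent using $\sum_{j=2}^n(1/n-\delta_{j,K})=1/2-1/n+o(1)$ to reduce to showing
\[
\sum_{j>2n/3}\Bigl(\tfrac{1}{n}-\delta_{j,K}\Bigr)\;\geq\;\tfrac{1}{7n}.
\]
The gap in your proposal is that you do not actually close this inequality. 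Your route~(1), sharpening the bound on the $(iii)$-part of $Z_F$, is irrelevant: the exponent $E_{K,3}$ comes from the $(ii)$-part (Corollary~\ref{cor : subspaceF}), where the subspace dimension is genuinely $\leq 2n/3$, so no improvement is available there. Your route~(3), improving Bombieri--Pila via the collinearity structure, is speculative and unnecessary.

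Your route~(2) is the correct one, and it is exactly what the paper does --- but it is a two-line pigeonhole, not an open problem. Assume for contradiction that $\sum_{j>2n/3}(1/n-\delta_{j,K})<1/(7n)$. Then $\sum_{j>2n/3}\delta_{j,K}>(\#\{j>2n/3\})/n-1/(7n)$, and since $\sum_{j\geq 2}\delta_{j,K}=1/2+o(1)$ this forces $\sum_{2\leq j\leq 2n/3}\delta_{j,K}$ to be small. But every term in that sum is $\geq\delta_{2,K}$, and there are roughly $2n/3-1$ of them, so one deduces $\delta_{2,K}<1/(4n)$, contradicting the hypothesis. That is the entire missing step; once you write it out the proof is complete and identical to the paper's.
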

\begin{proof}
We have:
\[
Z_{F}=\left(\bigcup_{\substack{\sigma,\tau\in\Emb(K/\mathbb{Q})\\\sigma\neq \tau\\\tau (\alpha_{2,K})\neq\alpha_{2,K}}}V_{\sigma,\tau}\right)\cup\left(\bigcup_{\substack{\sigma\in\Emb(K/\mathbb{Q})\\\sigma\neq \Id}}K^{\sigma}\right)\cap\mathbb{Z}^{n-2},
\]
where
\[
V_{\sigma,\tau}=\left\{\bfz\in\mathcal{C}:\sigma(z)-z= \frac{\sigma(\alpha_{2,K})-\alpha_{2,K}}{\tau(\alpha_{2,K})-\alpha_{2,K}}\cdot(\tau(z)-z)\right\}.
\]
Moreover, we recall that
\[
E_{K,3}= \sum_{3\leq j\leq\frac{2n}{3}}\left(\frac{1}{n}-\delta_{j,K}\right).
\]
\[
\begin{split}
\sum_{\substack{\bfz\in\mathcal{C}\\ \bfz\in Z_{F}\setminus Z_{\square} }}\sum_{\substack{(x_{1},x_{2})\in\mathcal{B}\cap\mathbb{Z}^{2}\\F_{\bfz}(x_{1},x_{2})=\square}}1&= \sum_{\substack{\bfz\in\mathcal{C}\cap Z_{F} \\F_{\bfz}(X_{1},X_{2})\neq\square}}\sum_{\substack{(x_{1},x_{2})\in\mathcal{B}\cap\mathbb{Z}^{2}\\F_{\bfz}(x_{1},x_{2})=\square}}1\\&=
\sum_{\substack{\bfz\in\mathcal{C}\cap Z_{F}  \\F_{\bfz}(X_{1},X_{2})\neq\square}}\quad\sum_{|x_{2}|\leq D_{K}^{\frac{1}{n}-\delta_{2,K}}}\sum_{\substack{|x_{1}|\leq D^{\frac{1}{n}}\\F_{\bfz}(x_{1},x_{2})=\square}}1\\&\ll_{n,\varepsilon} D_{K}^{\frac{1}{2n}}\cdot D_{K}^{\frac{1}{n}-\delta_{2,K}}\cdot \# (\mathcal{C}\cap Z_{F})
\end{split}
\]
where in the last step, we used the Bombieri-Pila bound (\cite{BP89}). Using Corollary $\ref{cor : subspaceF}$ and Lemma $\ref{lem : subfield}$, one gets
\[
 \# (\mathcal{C}\cap Z_{F})\ll D_{K}^{E_{K,3}}.
\]
On the other hand, $(\ref{eq : succmindisc})$, implies that
\[
D_{K}^{\frac{1}{2n}+\left(\frac{1}{n}-\delta_{2,K}\right)+E_{K,3}}=D_{K}^{\frac{1}{2n}+ \sum_{2\leq j\leq\frac{2n}{3}}\left(\frac{1}{n}-\delta_{j,K}\right)}\ll D_{K}^{\frac{1}{2}-\frac{1}{2n}-\sum_{ j>\frac{2n}{3}}\left(\frac{1}{n}-\delta_{j,K}\right)}.
\]
If
\[
\sum_{j>\frac{2n}{3}-2}\left(\frac{1}{n}-\delta_{j,K}\right)< \frac{1}{7n},
\]
then
\[
\sum_{j>\frac{2n}{3}-2}\delta_{j,K}< \frac{1}{3}+\frac{1}{n}-\frac{1}{7n}.
\]
Since from $(\ref{eq : succmindisc})$ we have
\[
\sum_{j=2}^{n}\delta_{j,K}=\frac{1}{2}+o(1);
\]
This would imply that
\[
\left\lfloor\frac{2n}{3}-2\right\rfloor\delta_{2,K}\leq \sum_{j=2}^{\lfloor\frac{2n}{3}-2\rfloor}\delta_{j,K}=\frac{1}{2}- \sum_{j>\frac{2n}{3}-2}\delta_{j,K}< \frac{1}{2}-\frac{1}{3}-\frac{1}{n}+\frac{1}{7n}\leq\frac{1}{6}-\frac{6}{7n}
\]
Thus, one deduces that
\[
\delta_{2,K}<\left(\frac{1}{6}-\frac{6}{7n}\right)\cdot\left(\frac{2n}{3}-3\right)^{-1}\leq \frac{1}{4n},\quad\text{ when $n\geq 5$}
\]
which would be in contradiction with the fact that we are assuming $\delta_{2,K}\geq\frac{1}{4n}$. Hence, we have
\[
\sum_{j>\frac{2n}{3}-2}\left(\frac{1}{n}-\delta_{j,K}\right)\geq \frac{1}{7n},
\]
which leads to
\[
\begin{split}
\sum_{\substack{\bfz\in\mathcal{C}\cap Z_{F} \\F_{\bfz}(X_{1},X_{2})\neq\square}}\sum_{\substack{(x_{1},x_{2})\in\mathcal{B}\cap\mathbb{Z}^{2}\\F_{\bfz}(x_{1},x_{2})=\square}}1\ll_{n,\varepsilon} D_{K}^{\frac{1}{2}-\frac{1}{2n}-\frac{1}{7n}},
\end{split}
\]
which conclude the proof of the Lemma.
\end{proof}
\subsection{Preparation for the sieve: smooth weights.}
As previously mentioned, our strategy is to fix $\bfz = (z_{3}, \dots, z_{n})$ and study how often
\[
F_{\bfz}(x_{1}, x_{2}) = N_K\left(X_{1} + X_{2} \alpha_{2,K} + z_{3} \alpha_{3,K} + \cdots + z_{n} \alpha_{n,K}\right)
\]
is a square, as $(x_1, x_2)$ ranges over the box
\[
\mathcal{B}_{\bfz} := \left[-D_K^{\frac{1}{n}}, D_K^{\frac{1}{n}}\right] \times \left[-D_K^{\frac{1}{n} - \delta_{2,K}}, D_K^{\frac{1}{n} - \delta_{2,K}}\right].
\]
Having this in mind, define
\[
a_{\bfz}(n) = \sum_{\substack{(x_{1}, x_{2}) \in \mathcal{B}_{\bfz} \cap \mathbb{Z}^2 \\ F_{\bfz}(x_{1}, x_{2}) = n}} 1.
\]
Then,
\begin{equation}
\#\left\{(x_{1}, x_{2}) \in \mathcal{B}_{\bfz} : F_{\bfz}(x_{1}, x_{2}) = \square \right\} \leq \sum_{k \in \mathbb{N}} a_{\bfz}(k^2).
\label{eq : seq}
\end{equation}
Our goal is to estimate the right-hand side of \eqref{eq : seq} using the square sieve (including its version for composite moduli). As we will see in the upcoming sections, after applying the sieve lemma, we will need to apply the Poisson summation formula. For that reason, rather than working directly with the sequence $(a_{\bfz}(n))_{n \in \mathbb{N}}$, we will instead introduce suitable smooth weights. We begin by choosing two smooth functions $g_{1}, g_{2} : \mathbb{R} \to \mathbb{R}_{\geq 0}$ satisfying the following properties:

\begin{itemize}
\item[$i)$] $g_{1}$ is supported on $\left[-D_{K}^{\frac{1}{n}},D_{K}^{\frac{1}{n}}\right]$ and $g_{2}$ is supported on $\left[-D_{K}^{\frac{1}{n}-\delta_{2,K}},D_{K}^{\frac{1}{n}-\delta_{2,K}}\right]$
\item[$ii)$] for any $k\geq 0$, we have that
\begin{equation}
\left|\frac{\partial^{k}g_{1}}{\partial x_{1}^{k}}\right|\ll D_{K}^{-\frac{k}{n}},\quad \left|\frac{\partial^{k}g_{2}}{\partial x_{2}^{k}}\right|\ll D_{K}^{-k\left(\frac{1}{n}-\delta_{2,K}\right)}.
\label{eq : fourbound}
\end{equation}
\end{itemize}
From the condition above, one deduces the following bounds on the Fourier transforms of $g_{1}$, and $g_{2}$
\[
\hat{g}_{1}(u)|\ll D_{K}^{\frac{1}{n}}\left|1+uD_{K}^{\frac{1}{n}}\right|^{-2},\quad |\hat{g}_{2}(u)|\ll D_{K}^{\frac{1}{n}-\delta_{2,K}}\left|1+uD_{K}^{\frac{1}{n}-\delta_{2,K}}\right|^{-2}.
\]
Then, for any $n\geq 0$ we define
\begin{equation}
\omega_{\bfz} (n)=\sum_{\substack{(x_{1},x_{2})\in\mathbb{Z}^{2}\\F_{\bfz}(x_{1},x_{2})=n}}g_{1}(x_{1})g_{2}(x_{2}).
\label{eq : weight}
\end{equation}
We remark that when $n=0$,
\[
\omega_{\bfz}(n)=
\begin{cases}
g_{1}(0)g_{2}(0)    &\text{if $\bfz=\boldsymbol{0}$}\\
0                   &\text{otherwise,}
\end{cases}
\] 
since $F_{\bfz}(x_{1},x_{2})=N_{K}(x_{1}+x_{2}\alpha_{2,K}+z_{3}\alpha_{3,K}+\cdots+z_{n}\alpha_{n,K})$ and $x\in K$ is such that $N_{K}(x)=0$ if and only if $x=0$. 
\subsection{The square sieve: proof of Theorem \ref{thm : deltasmall}.}
\label{sec : deltalarge}
We record the following Sieve Lemma due to Heath-Brown \cite{HB84}:
\begin{lem}
    Let $\mathcal{P}$ be a finite set of $P$ prime numbers. Let $(\omega(n))_{n\in \mathbb{N}}$ be a sequence of non negative number such that $\omega(n)=0$ if either $n=0$, or $n>e^{P}$. Then
    \[
    \sum_{k}\omega(k^{2})\ll \frac{1}{P}\sum_{n}\omega(n)+\frac{1}{P^{2}}\sum_{\substack{p,q\in\mathcal{P}\\p\neq q}}\left|\sum_{n}\omega (n)\left(\frac{n}{pq}\right)\right|.
    \]
    \label{lem : sqsieve}
\end{lem}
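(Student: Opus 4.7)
The plan is to run Heath--Brown's character-sum second-moment trick. Introduce the auxiliary quantity
\[
S := \sum_{n} \omega(n) \left(\sum_{p \in \mathcal{P}} \left(\frac{n}{p}\right)\right)^{2},
\]
which is non-negative because $\omega \geq 0$. I will bound $S$ from below using only the square values of $n$, and from above by expanding the inner square.

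For the lower bound, I would observe that for $n = k^{2}$ with $k \geq 1$ one has $\left(\frac{n}{p}\right) = 1$ whenever $p \nmid k$, so
\[
\sum_{p \in \mathcal{P}} \left(\frac{n}{p}\right) \;\geq\; P - \#\{p \in \mathcal{P} : p \mid k\} \;\geq\; P - \nu(k),
\]
where $\nu(k)$ denotes the number of distinct prime divisors of $k$. Since $\omega$ is supported in $n \leq e^{P}$, only values $k \leq e^{P/2}$ appear in $\sum_{k}\omega(k^{2})$, and for such $k$ we have $\nu(k) \leq \log k / \log 2 \leq P/(2\log 2)$. Because $2\log 2 > 1$, this gives $\sum_{p \in \mathcal{P}} \left(\frac{k^{2}}{p}\right) \geq cP$ for the absolute constant $c = 1 - 1/(2\log 2) > 0$. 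Squaring and summing,
\[
S \;\geq\; c^{2} P^{2} \sum_{k} \omega(k^{2}).
\]

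For the upper bound, I would expand the square and switch the order of summation:
\[
S \;=\; \sum_{p,\, q \in \mathcal{P}} \sum_{n} \omega(n) \left(\frac{n}{p}\right)\left(\frac{n}{q}\right).
\]
The diagonal $p = q$ contributes $\sum_{p \in \mathcal{P}} \sum_{n:\,p\nmid n} \omega(n) \leq P\sum_{n}\omega(n)$, using $\left(\frac{n}{p}\right)^{2} \in \{0,1\}$. The off-diagonal $p \neq q$ contributes, in absolute value, at most $\sum_{p \neq q}\bigl|\sum_{n}\omega(n)\left(\frac{n}{pq}\right)\bigr|$. Combining the two estimates for $S$ and dividing through by $c^{2}P^{2}$ gives the stated inequality.

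The only mildly delicate point is verifying that $c = 1 - 1/(2\log 2) > 0$: the precise calibration of the support condition $n \leq e^{P}$ is exactly what makes $\nu(k) \leq P/(2\log 2)$ strictly smaller than $P$, so that one genuinely wins a factor of $P^{2}$ when squaring. I do not anticipate any other real obstacle; the argument is essentially a bookkeeping expansion of the second moment.
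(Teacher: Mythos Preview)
Your argument is correct and is precisely Heath--Brown's original proof. The paper does not supply its own proof of this lemma; it simply records the statement and cites \cite{HB84}, so there is nothing further to compare.
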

We will use the sieve inequality in Lemma $\ref{lem : sqsieve}$ to show:
\begin{lem}
    Let $\bfz\in\mathbb{Z}^{n-2}\setminus\{\boldsymbol{0}\}$ such that $F_{\bfz}(X_{1},X_{2})$ is not a square in $\mathbb{Z}[X_{1},X_{2}]$, and consider the sequence $(\omega_{\bfz}(n))_{n\in\mathbb{N}}$ defined in $(\ref{eq : weight})$. Then,
    \[
    \sum_{k}\omega_{\bfz}(k^{2})\ll_{n,\varepsilon} D_{K}^{\frac{4}{3n}-\frac{2\delta_{2,K}}{3}+\varepsilon}.
    \]
    \label{lem : sieve1}
\end{lem}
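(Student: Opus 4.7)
The plan is to apply Heath-Brown's square sieve (Lemma~\ref{lem : sqsieve}) to $(\omega_{\bfz}(n))_{n\in\mathbb{N}}$ with $\mathcal{P}$ the set of primes in a dyadic window $[Q,2Q]$, where $Q=Q(D_K)$ is a parameter to be chosen at the end; then $P:=\#\mathcal{P}\asymp Q/\log Q$. The diagonal term of the sieve is immediate from the support conditions on $g_1,g_2$:
\[
\frac{1}{P}\sum_{n}\omega_{\bfz}(n)\ll \frac{D_K^{\frac{2}{n}-\delta_{2,K}}}{P}.
\]

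The core of the argument is to estimate, for each pair of distinct primes $p,q\in\mathcal{P}$, the twisted sum
\[
S(p,q):=\sum_{(x_1,x_2)\in\mathbb{Z}^2}g_1(x_1)g_2(x_2)\left(\frac{F_{\bfz}(x_1,x_2)}{pq}\right).
\]
I would open $(x_1,x_2)$ into residue classes modulo $pq$ and apply Poisson summation in each variable, writing
\[
S(p,q)=\frac{1}{(pq)^2}\sum_{s_1,s_2\in\mathbb{Z}}\widehat{g}_1\!\left(\tfrac{s_1}{pq}\right)\widehat{g}_2\!\left(\tfrac{s_2}{pq}\right)T_{p,q}(s_1,s_2),
\]
where $T_{p,q}(s_1,s_2):=\sum_{v_1,v_2\bmod pq}\bigl(\frac{F_{\bfz}(v_1,v_2)}{pq}\bigr)e_{pq}(s_1v_1+s_2v_2)$ factors via the Chinese Remainder Theorem as $T_p(\overline{q}s_1,\overline{q}s_2)\,T_q(\overline{p}s_1,\overline{p}s_2)$. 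The decay estimates in \eqref{eq : fourbound} truncate the frequencies to $|s_i|\lesssim pq/R_i$ (with $R_1=D_K^{1/n}$, $R_2=D_K^{1/n-\delta_{2,K}}$) and give $\sum_{s_1,s_2}|\widehat{g}_1\widehat{g}_2|\ll (pq)^2 D_K^{\varepsilon}$, so the task reduces to a uniform bound on the local complete sums $T_p(a,b):=\sum_{v_1,v_2\bmod p}\bigl(\frac{F_{\bfz}(v_1,v_2)}{p}\bigr)e_p(av_1+bv_2)$.

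Because $F_{\bfz}$ is not a square in $\mathbb{Z}[X_1,X_2]$, it remains non-square (and squarefree) modulo $p$ for all but $O_{\bfz}(1)$ primes. For those $p$, Deligne's theorem applied to the middle-extension sheaf on $\mathbb{A}^2_{\mathbb{F}_p}$ attached to the Legendre symbol of $F_{\bfz}$ twisted by $e_p(av_1+bv_2)$ should yield $T_p(a,b)\ll_n p$ for all $(a,b)\in\mathbb{F}_p^2$ outside an exceptional set of size $O_n(1)$. The exceptional frequencies, where only the trivial bound $O(p^2)$ is available, contribute at most $O(q)$ to $S(p,q)$ since the restricted congruence condition cuts down the Fourier mass by a factor $p$. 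Combining the generic and exceptional contributions gives $S(p,q)\ll pq\, D_K^{\varepsilon}$, and hence the off-diagonal term in Lemma~\ref{lem : sqsieve} is $\ll Q^2 D_K^{\varepsilon}$.

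Putting everything together,
\[
\sum_k\omega_{\bfz}(k^2)\ll_{\varepsilon} \frac{D_K^{\frac{2}{n}-\delta_{2,K}+\varepsilon}}{Q}+Q^2 D_K^{\varepsilon},
\]
and the choice $Q\asymp D_K^{(\frac{2}{n}-\delta_{2,K})/3}$ (which is admissible since $\frac{2}{n}-\delta_{2,K}>0$) balances the two terms and yields the claimed bound $D_K^{\frac{4}{3n}-\frac{2\delta_{2,K}}{3}+\varepsilon}$. The main technical obstacle will be the uniform estimate $T_p(a,b)\ll p$: the relevant non-degeneracy of the Legendre sheaf attached to $F_{\bfz}=\prod_{\sigma\in\Emb(K/\mathbb{Q})}(X_1+\sigma(\alpha_{2,K})X_2+\sigma(z))$ is the two-variable analogue of the input Proposition~\ref{prop : Anumber} provides in three variables, and one must isolate precisely the configurations of the $n$ linear factors -- and of the exceptional frequencies $(a,b)$ -- where the sheaf ceases to be geometrically generic, using only the assumption that $F_{\bfz}$ is not a square.
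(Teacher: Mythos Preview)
Your overall architecture matches the paper's exactly: square sieve, Poisson in both variables, CRT factorisation of the complete sum, and the same optimisation $Q\asymp D_K^{(2/n-\delta_{2,K})/3}$. The gap is in your handling of the exceptional frequencies.

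The frequency $(s_1,s_2)=(0,0)$ is exceptional for \emph{both} $p$ and $q$ simultaneously: $T_p(0,0)=\sum_{v_1,v_2}\bigl(\tfrac{F_{\bfz}(v_1,v_2)}{p}\bigr)$ carries no additive twist, so the Deligne/stratification input you invoke for the twisted sums does not apply. Your fallback---trivial bound $O(p^2)$ together with ``the restricted congruence condition cuts down the Fourier mass by a factor $p$''---fails here, because $\widehat g_1(0)\widehat g_2(0)\asymp D_K^{2/n-\delta_{2,K}}$ is the \emph{peak} of the Fourier weight and there is no congruence restriction to exploit at the origin. With the trivial bound this single term contributes
\[
\frac{1}{(pq)^2}\,|T_p(0,0)T_q(0,0)|\,\widehat g_1(0)\widehat g_2(0)\;\ll\;\frac{(pq)^2}{(pq)^2}\,D_K^{\frac{2}{n}-\delta_{2,K}}\;=\;D_K^{\frac{2}{n}-\delta_{2,K}},
\]
i.e.\ the full diagonal with no saving, and the argument collapses. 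The paper avoids this by proving directly (Lemma~\ref{lem : exp}, via Lemma~\ref{lem : expnotshift}) that $T_p(0,0)\ll_n p^{3/2}$: one fibres over $y$, applies the Weil bound to the one-variable sum $\sum_x\bigl(\tfrac{F_{\bfz}(x,y)}{p}\bigr)$, and checks that $F_{\bfz}(X,y)$ is a non-square polynomial for all but $O_n(1)$ values of $y$. With this input the zero frequency contributes $\ll D_K^{2/n-\delta_{2,K}}/Q$, on par with the diagonal.

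A secondary point you skate over: the paper also needs the range constraint $Q\le D_K^{1/n-\delta_{2,K}}$ to control the nonzero frequencies $\bfu$ with $p\mid\bfu$ or $q\mid\bfu$ (where again only $T\ll p^{3/2}$ is available). With the optimal $Q$ this forces $\delta_{2,K}\le\frac{1}{2n}$, which is why Lemma~\ref{lem : sieve1} only feeds into Theorem~\ref{thm : deltalarge} and a different mechanism is needed for larger $\delta_{2,K}$.
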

To prove Lemma $\ref{lem : sieve1}$, we start by applying Lemma $\ref{lem : sqsieve}$ to the sequence $(\omega_{\bfz} (n))_{n}$:
 \begin{equation}
    \sum_{k}\omega_{\bfz}(k^{2})\ll \frac{1}{P}\sum_{n}\omega_{\bfz}(n)+\frac{1}{P^{2}}\sum_{\substack{p,q\in\mathcal{P}\\p\neq q}}\left|\sum_{n}\omega_{\bfz} (n)\left(\frac{n}{pq}\right)\right|,
    \label{eq : sqafter}
    \end{equation}
provided that $P>(\log D_{K})^{2}$. By the definition of the sequence $\omega_{\bfz}(n)$, it follows that
\begin{equation}
\frac{1}{P}\sum_{n}\omega_{\bfz}(n)\ll \frac{D_{K}^{\frac{2}{n}-\delta_{2,K}}}{P}.
\label{eq : sieveeasypart}
\end{equation}
Hence, it remains to bound the second term on the left-hand side of $(\ref{eq : sqafter})$. We start by observing that
\begin{equation}
\begin{split}
    S(\bfz,p,q)&= \sum_{n\in\mathbb{N}}\omega _{\bfz}(n)\left(\frac{n}{pq}\right)\\&=\sum_{(x_{1},x_{2})\in\mathbb{Z}^{2}}g_{1}(x_{1})g_{2}(x_{2})
\left(\frac{F_{\bfz}(x_{1},x_{2})}{pq}\right)\\&=\sum_{(a_{1},a_{2})\text{ }(pq)}\left(\frac{F_{\bfz}(a_{1},a_{2})}{pq}\right)\sum_{\substack{x_{1}\in\mathbb{Z}\\ x_{1}\equiv a_{1}\text{ } (pq)}}g_{1}(x_{1})\sum_{\substack{x_{2}\in\mathbb{Z}\\ x_{2}\equiv a_{2}\text{ } (pq)}}g_{2}(x_{2})\\&=\frac{1}{(pq)^{2}}\sum_{(u_{1},u_{2})\in\mathbb{Z}^{2}}\hat{g}_{1}\left(\frac{u_{1}}{pq}\right)\hat{g}_{2}\left(\frac{u_{2}}{pq}\right)T(\bfu,\bfz; pq), 
\end{split}
\label{eq : intro}
\end{equation}
where for every $r\geq 2$ square-free and $\bfv\in \mathbb{Z}^{2}$,
\[
T(\bfv,\bfz;r)=\sum_{(a_{1},a_{2})\text{ } (r)}\left(\frac{F_{\bfz}(a_{1},a_{2})}{r}\right)e_{r}(a_{1}v_{1}+a_{2}v_{2}).
\]
An application of the Chinese Remainder Theorem shows that
\[
S(\bfz,p,q)=\frac{1}{(pq)^{2}}\sum_{(u_{1},u_{2})\in\mathbb{Z}^{2}}\hat{g}_{1}\left(\frac{u_{1}}{pq}\right)\hat{g}_{2}\left(\frac{u_{2}}{pq}\right)T(\overline{q}\bfu,\bfz; p)T(\overline{p}\bfu,\bfz; q).
\]
\subsubsection{The exponential sum $T(\bfv,\bfz; r)$.}
In the next session (see Lemma $\ref{lem : expnotshift}$), we shall prove the following
\begin{lem}
Let $\bfz=(z_{3},...,z_{n})\in\mathbb{Z}^{n-2}$, such that $F_{\bfz}(X_{1},X_{2})$ is not a square. Then for every prime number $p$ such that
\begin{itemize}
    \item[$i)$] the polynomial $F_{\bfz}(X_{1},X_{2})\mod p$ is not a square;
    \item[$ii)$] $p\nmid N_{K}(\sigma(\alpha_{2})-\alpha_{2})$, for every $\sigma\in\Emb(K/\mathbb{Q})$ such that $\sigma(\alpha_{2})\neq\alpha_{2}$.
\end{itemize}
\[
|T(\bfv,\bfz;p)|\ll_{n} p(p,v_{1},v_{2})^{\frac{1}{2}}.
\]
\label{lem : exp}
\end{lem}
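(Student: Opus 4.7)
The plan is to split the argument according to whether $(v_{1},v_{2})\equiv(0,0)\pmod{p}$ or not, corresponding respectively to $(p,v_{1},v_{2})=p$ and $(p,v_{1},v_{2})=1$. In both cases I would first replace $F_{\bfz}\bmod p$ by its squarefree kernel $G$, since $\chi(F_{\bfz})=\chi(G)$; by hypothesis $(i)$, $G$ is a non-constant squarefree polynomial. Writing $G=\prod_{j}L_{j}$ with $L_{j}=X_{1}+a_{j}X_{2}+b_{j}$ over $\overline{\mathbb{F}}_{p}$, condition $(ii)$ guarantees that the residues $a_{j}=\sigma_{j}(\alpha_{2,K})\bmod p$ inherit the distinctness structure of the $\sigma_{j}(\alpha_{2,K})$ in $K$.

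In the trivial-frequency case, $T=\sum_{a_{1},a_{2}}\chi(G(a_{1},a_{2}))$ is a pure two-variable character sum, and I would fix $a_{2}$ and estimate the inner sum $\sum_{a_{1}}\chi(G(a_{1},a_{2}))$ via the classical Weil bound. Two roots of $G(X_{1},a_{2})$ (viewed as a polynomial in $X_{1}$) collide iff $(a_{i}-a_{j})a_{2}=b_{j}-b_{i}$ for some $i\neq j$, which by condition $(ii)$ has at most one solution in $a_{2}$ per pair of indices with $a_{i}\neq a_{j}$. Hence $G(X_{1},a_{2})$ is squarefree (and hence not a square) off an exceptional set of size $O_{n}(1)$. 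On the generic $a_{2}$, Weil gives $|\sum_{a_{1}}\chi(G(a_{1},a_{2}))|\ll_{n}\sqrt{p}$, while the trivial bound $p$ suffices on the exceptional ones; summation over $a_{2}$ then yields $|T|\ll_{n}p^{3/2}$, matching $p\cdot(p,v_{1},v_{2})^{1/2}$ in this case.

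The non-trivial-frequency case requires the sharper bound $|T|\ll_{n}p$, which I plan to obtain from Deligne's theorem for two-dimensional mixed character sums: viewing $T$ as the Frobenius trace on $H^{\bullet}_{c}(\mathbb{A}^{2}_{\overline{\mathbb{F}}_{p}},\mathcal{L}_{\chi(G)}\otimes\mathcal{L}_{\psi(v_{1}x_{1}+v_{2}x_{2})})$, the Weil II bound yields $|T|\leq(\dim H^{2}_{c})\cdot p$ with $\dim H^{2}_{c}$ controlled in terms of $n$. Both tensor factors are geometrically non-trivial: $\mathcal{L}_{\chi(G)}$ because $G$ is a non-square mod $p$, and $\mathcal{L}_{\psi}$ because $(v_{1},v_{2})\neq(0,0)$. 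The main obstacle is the cohomological non-degeneracy: specifically, ruling out contributions from $H^{i}_{c}$ with $i<2$, which reduces to checking that the Artin-Schreier twist is non-constant along every irreducible component of the ramification divisor $\{G=0\}$ of the Kummer cover $y^{2}=G(x_{1},x_{2})$. Condition $(ii)$ ensures the line arrangement $\{L_{j}=0\}\subset\mathbb{A}^{2}$ is in sufficiently general position modulo $p$ for this verification to go through. Combining the two cases gives the stated bound $|T(\bfv,\bfz;p)|\ll_{n}p(p,v_{1},v_{2})^{1/2}$.
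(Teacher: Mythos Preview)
Your trivial-frequency case is exactly the paper's argument: fix $x_{2}$, apply the one-variable Weil bound to the inner sum in $x_{1}$ off an exceptional set of $O_{n}(1)$ values of $x_{2}$, and sum.

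For the non-trivial frequency, the paper takes a different and more elementary route. Rather than invoking $\ell$-adic cohomology on $\mathbb{A}^{2}$, it applies Hooley's variance method (Theorem~\ref{thm : hooley}): writing $T$ as $\sum_{w^{2}=G(x_{1},x_{2})}e_{p}(v_{1}x_{1}+v_{2}x_{2})$, one slices by the level sets $v_{1}x_{1}+v_{2}x_{2}=k$ and is reduced to showing $N_{m}(k)=p^{m}+O_{n}(p^{m/2})$ uniformly in $k$, for the one-parameter family of curves $w^{2}=G(X,s'X+k')$. Condition~$(ii)$ is used only to check that, after this linear substitution, the specialised polynomial remains squarefree for all but $O_{n}(1)$ values of $k'$, so Lang--Weil applies. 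Hooley's inequality then converts the variance bound $\sum_{k}|N_{m}(k)-p^{m}|^{2}\ll p^{2m}$ directly into $|T|\ll_{n}p$. This avoids any sheaf-theoretic non-degeneracy check.

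Your cohomological plan has a real confusion that would need fixing. By Deligne's weight bound, eigenvalues on $H^{i}_{c}$ have absolute value at most $p^{i/2}$, so to get $|T|\ll_{n}p$ you must kill $H^{3}_{c}$ and $H^{4}_{c}$, \emph{not} $H^{i}_{c}$ for $i<2$; the latter already vanish by affine Artin vanishing and in any case contribute only $O(p^{1/2})$. Vanishing of $H^{4}_{c}$ is easy (the rank-one sheaf is geometrically non-trivial), but $H^{3}_{c}=0$ requires showing that the pushforward $R^{1}\pi_{!}(\mathcal{L}_{\chi(G)}\otimes\mathcal{L}_{\psi(v_{1}x_{1})})$ along the projection to the $x_{2}$-line has no geometrically constant constituent after the residual $\psi(v_{2}x_{2})$-twist. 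When $v_{2}=0$ this is a genuine irreducibility statement, and your proposed criterion (non-constancy of the Artin--Schreier twist along each line $L_{j}=0$) does not obviously deliver it; condition~$(ii)$ controls only the slopes $a_{j}$, not the $b_{j}$, so ``general position'' of the line arrangement is not available. The paper's Hooley reduction sidesteps all of this.
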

As a consequence of Lemma $\ref{lem : exp}$, we obtain
\begin{cor}
Let $Q>0$ and let $p,q$ be primes such that $Q<p,q<2Q$ and assume
\begin{itemize}
    \item[$i)$] the polynomials $F_{\bfz}(X_{1},X_{2})\mod p$, $F_{\bfz}(X_{1},X_{2})\mod q$ are not squares in $\mathbb{F}_{p}$ and $\mathbb{F}_{q}$ respectively;
    \item[$ii)$] $p,q\nmid N_{K}(\sigma(\alpha_{2})-\alpha_{2})$, for every $\sigma\in\Emb(K/\mathbb{Q})$ such that $\sigma(\alpha_{2})\neq\alpha_{2}$.
\end{itemize}
Then 
\[
S(\bfz,pq)\ll_{n} Q^{2}+ \frac{D^{\frac{2}{n}-\delta_{2,K}}}{Q},
\]
provided that  $D_{K}^{\frac{1}{2n}}\leq Q\leq D_{K}^{\frac{1}{n}-\delta_{2,K}} $.
\label{cor : sieve1}
\end{cor}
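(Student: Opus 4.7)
The plan is to insert the pointwise bound of Lemma \ref{lem : exp} into the Fourier expansion of $S(\bfz,p,q)$ already obtained, and then evaluate the resulting sum over $\bfu \in \mathbb{Z}^{2}$ by splitting according to the divisibility of $(u_{1},u_{2})$ by $p$ and by $q$.

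Since hypotheses $(i)$--$(ii)$ of the corollary are precisely those required by Lemma \ref{lem : exp} at both $p$ and $q$, and since $\gcd(\overline{q},p)=\gcd(\overline{p},q)=1$, the lemma gives
\[
|T(\overline{q}\bfu,\bfz;p)\,T(\overline{p}\bfu,\bfz;q)| \ll_{n} pq \cdot (p,u_{1},u_{2})^{1/2}\,(q,u_{1},u_{2})^{1/2}.
\]
I also record, from the stated pointwise bound $|\hat g_{j}(v)|\ll D_{K}^{a_{j}}(1+|v|D_{K}^{a_{j}})^{-2}$ with $a_{1}=1/n$ and $a_{2}=1/n-\delta_{2,K}$, the routine tail estimate
\[
\sum_{u\in\mathbb{Z}}\bigl|\hat g_{j}(u/M)\bigr| \ll \max\bigl(M,\,D_{K}^{a_{j}}\bigr), \qquad j=1,2,
\]
valid for any $M>0$ (split into the ``main" range $|u|\leq M/D_{K}^{a_{j}}$ and the polynomial tail beyond it).

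I then partition $\bfu\in\mathbb{Z}^{2}$ into four regions indexed by
\[
\epsilon_{p}:=\mathbf{1}\bigl[p\mid(u_{1},u_{2})\bigr],\qquad \epsilon_{q}:=\mathbf{1}\bigl[q\mid(u_{1},u_{2})\bigr],
\]
on each of which the bound above collapses to the constant $pq\cdot p^{\epsilon_{p}/2}q^{\epsilon_{q}/2}$. In each region I substitute $u_{i}\mapsto pu_{i}'$ when $\epsilon_{p}=1$ (and similarly for $q$) and evaluate the two Fourier sums separately by the tail estimate. The hypothesis $D_{K}^{1/(2n)}\leq Q\leq D_{K}^{1/n-\delta_{2,K}}$ is designed precisely so that $pq\asymp Q^{2}\geq D_{K}^{1/n}$ (so $M$ wins in the $(0,0)$ region) while $q\leq 2D_{K}^{1/n-\delta_{2,K}}\leq 2D_{K}^{1/n}$ (so $D_{K}^{a_{j}}$ wins when $M\in\{q,1\}$). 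A short bookkeeping computation then yields: region $(0,0)$ contributes $\ll pq\ll Q^{2}$; region $(1,1)$ contributes $\ll D_{K}^{2/n-\delta_{2,K}}/(pq)^{1/2}\ll D_{K}^{2/n-\delta_{2,K}}/Q$; and each of $(1,0),(0,1)$ contributes $\ll D_{K}^{2/n-\delta_{2,K}}/Q^{3/2}$, which is absorbed by the previous. Summing the four contributions gives the claim.

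No deep obstacle appears: granted Lemma \ref{lem : exp}, the whole argument is a careful bookkeeping exercise. The delicate point is simply matching the two Fourier regimes ($M$ vs.\ $D_{K}^{a_{j}}$) to the four divisibility cases so that exactly the two terms $Q^{2}$ and $D_{K}^{2/n-\delta_{2,K}}/Q$ remain---these are precisely the two terms that get balanced when $Q$ is chosen optimally in the proof of Theorem \ref{thm : deltasmall}.
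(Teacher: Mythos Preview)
Your proof is correct and follows essentially the same approach as the paper: insert the GCD-dependent bound from Lemma~\ref{lem : exp} (via CRT) into the Fourier expansion~\eqref{eq : intro}, then split the $\bfu$-sum according to divisibility by $p$ and $q$ and control each piece with the decay of $\hat g_{1},\hat g_{2}$. The only cosmetic difference is that the paper uses a coarser two-way split ($(\bfu,pq)=1$ versus $(\bfu,pq)>1$), bounding $T$ by the worst case $(pq)^{3/2}$ throughout the second region, whereas your four-way split tracks the cases $(1,0)$ and $(0,1)$ separately and shows they are dominated by the $(1,1)$ contribution; both routes land on the same two terms $Q^{2}+D_{K}^{2/n-\delta_{2,K}}/Q$.
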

\begin{proof}
Since the bound for $T(\bfu,\bfz,pq)$ depends on the G.C.D. between $u_{1},u_{2}$ and $pq$ we start by rewriting $S(\bfz,pq)$ as
\[
S(\bfz,pq)=\frac{1}{(pq)^{2}}\sum_{\substack{\bfu\in\mathbb{Z}^{2}\\(\bfu,pq)=1}}\hat{g}_{1}\left(\frac{u_{1}}{pq}\right)\hat{g}_{2}\left(\frac{u_{2}}{pq}\right)T(\bfu,\bfz; pq)+\frac{1}{(pq)^{2}}\sum_{\substack{\bfu\in\mathbb{Z}^{2}\\(\bfu,pq)>1}}\hat{g}_{1}\left(\frac{u_{1}}{pq}\right)\hat{g}_{2}\left(\frac{u_{2}}{pq}\right)T(\bfu,\bfz; pq)
\]

From $(\ref{eq : fourbound})$, it follows that
\[
\left|\hat{g}_{1}\left(\frac{u_{1}}{pq}\right)\hat{g}_{2}\left(\frac{u_{2}}{pq}\right)\right|\ll D_{K}^{\frac{2}{n}-\delta_{2,K}} \left(1+\frac{D_{K}^{\frac{1}{n}}u_{1}}{pq}\right)^{-2}\cdot\left(1+\frac{D_{K}^{\frac{1}{n}-\delta_{2,K}}u_{2}}{pq}\right)^{-2}.
\]
On the other hand, by applying Lemma $\ref{lem : exp}$, one gets
\[
\begin{split}
\frac{1}{(pq)^{2}}\sum_{\substack{\bfu\in\mathbb{Z}^{2}\\(\bfu,pq)=1}}|T(\bfu,\bfz; pq)|&\cdot\left|\hat{g}_{1}\left(\frac{u_{1}}{pq}\right)\hat{g}_{2}\left(\frac{u_{2}}{pq}\right)\right|\ll_{n}\\&\ll_{n} pq\cdot\frac{D_{K}^{\frac{2}{n}-\delta_{2,K}}}{(pq)^{2}}\sum_{\substack{\bfu\in\mathbb{Z}^{2}}}\left(1+\frac{D_{K}^{\frac{1}{n}}u_{1}}{pq}\right)^{-2}\cdot\left(1+\frac{D_{K}^{\frac{1}{n}-\delta_{2,K}}u_{2}}{pq}\right)^{-2}\\&\ll Q^{2},
\end{split}
\]
provided that $D_{K}^{\frac{1}{n}}\leq pq\leq 4Q^{2}$. We now deal with the contributions of those $\bfu$ for which $(\bfu,pq)>1$:
\[
\begin{split}
\frac{1}{(pq)^{2}}\sum_{\substack{\bfu\in\mathbb{Z}^{2}\\(\bfu,pq)>1}}\hat{g}_{1}\left(\frac{u_{1}}{pq}\right)\hat{g}_{2}\left(\frac{u_{2}}{pq}\right)T(\bfu,\bfz; pq)&\ll_{n} (pq)^{\frac{3}{2}}\cdot\frac{1}{(pq)^{2}}\sum_{\substack{\bfu\in\mathbb{Z}^{2}\\ p|\bfu}}\hat{g}_{1}\left(\frac{u_{1}}{pq}\right)\hat{g}_{2}\left(\frac{u_{2}}{pq}\right)\\&\quad +(pq)^{\frac{3}{2}}\cdot\frac{1}{(pq)^{2}}\sum_{\substack{\bfu\in\mathbb{Z}^{2}\\ q|\bfu}}\hat{g}_{1}\left(\frac{u_{1}}{pq}\right)\hat{g}_{2}\left(\frac{u_{2}}{pq}\right).
\end{split}
\]
Provided that $q\leq D^{\frac{1}{n}-\delta_{2,K}}$, one has
\[
\begin{split}
\sum_{\substack{\bfu\in\mathbb{Z}^{2}\\ q|\bfu}}\hat{g}_{1}\left(\frac{u_{1}}{pq}\right)\hat{g}_{2}\left(\frac{u_{2}}{pq}\right)&\ll D_{K}^{\frac{2}{n}-\delta_{2,K}}\sum_{\substack{\bfu\in\mathbb{Z}^{2}\\ q|\bfu}}\left(1+\frac{D_{K}^{\frac{1}{n}}u_{1}}{pq}\right)^{-2}\cdot\left(1+\frac{D_{K}^{\frac{1}{n}-\delta_{2,K}}u_{2}}{pq}\right)^{-2}\\&\ll D_{K}^{\frac{2}{n}-\delta_{2,K}},
\end{split}
\]
which is enough to complete the proof of the Lemma.
\end{proof}
We are finally ready to complete the proof of Lemma $\ref{lem : sieve1}$: let $Q$ be such that $D_{K}^{\frac{1}{2n}}\leq Q\leq D_{K}^{\frac{1}{n}-\delta_{2,K}} $ and take 
\[
\begin{split}
    \mathcal{P}=\left\{p:\begin{matrix}Q\leq p\leq 2Q\\F_{\bfz}(X_{1},X_{2})\mod p\text{ is not a square in }\mathbb{F}_{p}\\p\nmid  N_{K}(\sigma(\alpha_{2})-\alpha_{2})\text{ for all }\sigma\in\Emb(K/\mathbb{Q})\text{ such that }\sigma(\alpha_{2})\neq\alpha_{2} \end{matrix}\right\}.
\end{split}
\]

Then combining $(\ref{eq : sqafter})$, $(\ref{eq : sieveeasypart})$, and Corollary $\ref{cor : sieve1}$, one gets
\[
\sum_{n\in\mathbb{Z}}\omega_{\bfz} (k^{2})\ll_{n} \frac{D_{K}^{\frac{2}{n}-\delta_{2,K}}\log Q}{Q}+ Q^{2}.
\]
The optimal choice here is when $Q=D_{K}^{\frac{2}{3n}-\frac{\delta_{2,K}}{3}}$. So we get
\[
    \sum_{n\in\mathbb{Z}}\omega_{\bfz} (k^{2})\ll_{n,\varepsilon} D_{K}^{\frac{4}{3n}-\frac{2\delta_{2,K}}{3}+\varepsilon},
\]
provided that $D_{K}^{\frac{1}{n}}\leq D_{K}^{\frac{4}{3n}-\frac{2\delta_{2,K}}{3}}$ and $ D_{K}^{\frac{2}{3n}-\frac{\delta_{2,K}}{3}}\leq D^{\frac{1}{n}-\delta_{2,K}}$, which is satisfied as soon as $\delta_{2,K}\leq \frac{1}{2n}$.

\subsubsection{Proof of Theorem $\ref{thm : deltalarge}$}
We can now prove Theorem $\ref{thm : deltalarge}$. Thanks to Lemma $\ref{lem : index2}$, we may assume that $K$ does not contain a subfield of index $2$. We recall that 
\[\mathcal{C}=\left[-D_{K}^{\frac{1}{n}-\delta_{3,K}},D_{K}^{\frac{1}{n}-\delta_{3,K}}\right]\times\cdots\times\left[-D_{K}^{\frac{1}{n}-\delta_{n,K}},D_{K}^{\frac{1}{n}-\delta_{n,K}}\right],
\]
and that
\[
Z_{\square}=\{\bfz\in\mathbb{Z}^{n-2}: F_{\bfz}(X_{1},X_{2})\text{ is a square}\}.
\]
Then
\begin{equation}
\begin{split}
\#h_{2}(K)&\ll\# \left\{(y,z_{1},...,z_{n})\in\mathbb{Z}^{n+1}:\begin{matrix}|z_{i}|\ll D_{K}^{\frac{1}{n}-\delta_{i,K}},\quad\text{for }i=1,...,n\\y^{2}=N_{K}(z_{1}+z_{2}\alpha_{2,K}+\cdots+z_{n}\alpha_{n,K})\end{matrix}     \right\}\\&=\sum_{\bfz\in\mathcal{C}\cap\mathbb{Z}^{n-2}}\sum_{\substack{(x_{1},x_{2})\in\mathcal{B}\cap\mathbb{Z}^{2}\\F_{\bfz}(x_{1},x_{2})=\square}}1\\&= \sum_{\substack{\bfz\in\mathcal{C}\cap Z_{\square}}}\sum_{\substack{(x_{1},x_{2})\in\mathcal{B}\cap\mathbb{Z}^{2}\\F_{\bfz}(x_{1},x_{2})=\square}}1+\sum_{\substack{\bfz\in\mathcal{C}\cap\mathbb{Z}^{n-2}\\\bfz\not\in Z_{\square}}}\sum_{\substack{(x_{1},x_{2})\in\mathcal{B}\cap\mathbb{Z}^{2}\\F_{\bfz}(x_{1},x_{2})=\square}}1.
\end{split}
\label{eq : tog}
\end{equation}
For the second term in $(\ref{eq : tog})$, we can use Lemma $\ref{lem : sieve1}$, getting
\[
\begin{split}
\sum_{\substack{\bfz\in\mathcal{C}\cap\mathbb{Z}^{n-2}\\ F_{\bfz}(X_{1},X_{2})\neq\square}}\sum_{\substack{(x_{1},x_{2})\in\mathcal{B}\cap\mathbb{Z}^{2}\\F_{\bfz}(x_{1},x_{2})=\square}}1&\ll_{n,\varepsilon} \#(\mathcal{C}\cap\mathbb{Z})\cdot D_{K}^{\frac{4}{3n}-\frac{2\delta_{2,K}}{3}+\varepsilon}\\&=D_{K}^{\sum_{j=3}^{n}\left(\frac{1}{n}-\delta_{j,K}\right)+\frac{4}{3n}-\frac{2\delta_{2,K}}{3}+\varepsilon}\\&= D_{K}^{\frac{1}{2}-\frac{2}{3n}+\frac{\delta_{2,K}}{3}+\varepsilon}.
\end{split}
\]
On the other, thanks to Lemma $\ref{cor : square}$ we know that
\[
 \sum_{\substack{\bfz\in\mathcal{C}\cap Z_{\square}}}\sum_{\substack{(x_{1},x_{2})\in\mathcal{B}\cap\mathbb{Z}^{2}\\F_{\bfz}(x_{1},x_{2})=\square}}1\ll_{n,\varepsilon} D_{K}^{\frac{1}{4}+\frac{1}{n}+\varepsilon},
\]
when $n\geq 8$ and zero otherwise. Since $\frac{1}{4}+\frac{1}{n}\leq\frac{1}{2}-\frac{2}{3n}$, for $n>6$, we complete the proof of Theorem $\ref{thm : deltalarge}$.
\subsection{The square sieve for composite moduli: bounding $h_{2}(K)$ when $\delta_{2,K}$ is large.}
In this section, we are going to prove Theorem $\ref{thm : deltasmall}$. As mentioned in the introduction, we do this by deploying the square sieve for composite moduli. We record here the sieve Lemma as stated in \cite[Lemma $2$]{HP12} 
\begin{lem}
Let $A = |\mathcal{A}|$, $U = |\mathcal{U}|$ , and $V = |\mathcal{V}|$, so that $A = U V$ . Furthermore, assume that $V^{3}\ll A$. Let $\omega$ be a non-negative weight such that $\omega (n) = 0$ for $n>\exp \min ( U,V)$, then
\[
\begin{split}
\sum_{n\neq 0}\omega (n)&\ll \frac{1}{A}\sum_{n\neq 0}\omega (n)+\frac{1}{A^{2}}\sum_{v,v'\in\mathcal{V}}\sum_{\substack{u,u'\mathcal{U}\\u\neq u'}}\left|\sum_{n\in\mathbb{Z}}\left(\frac{n}{uu'}\right)\left(\frac{n}{uu'}\right)\omega(n)\right|\\&\quad+\frac{U}{A^{2}}\sum_{\substack{v,v'\mathcal{V}\\v\neq v'}}\left|\sum_{n\in\mathbb{Z}}\left(\frac{n}{vv'}\right)\omega(n)\right|.
\end{split}
\]
\label{Lem : sqsievecom}
\end{lem}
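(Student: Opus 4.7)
The plan is to derive this composite-moduli sieve by adapting Heath-Brown's proof of Lemma \ref{lem : sqsieve} to the product structure $\mathcal{A} = \mathcal{U} \times \mathcal{V}$. The amplifier to use is
\[
S(n) = \sum_{u \in \mathcal{U}} \sum_{v \in \mathcal{V}} \left(\frac{n}{uv}\right),
\]
which satisfies $S(k^2) = A$ whenever $(k, uv) = 1$ for every $(u,v) \in \mathcal{U}\times\mathcal{V}$. The first step is to use the support condition $\omega(n) = 0$ for $n > \exp\min(U,V)$ to argue that almost no primes of $\mathcal{U}\cup\mathcal{V}$ divide a nonzero $n$ in the support of $\omega$: if $n \leq \exp\min(U,V)$ and the primes of $\mathcal{U}$ (resp.\ $\mathcal{V}$) are at least of moderate size, then the number of such primes dividing $n$ is $o(U)$ (resp.\ $o(V)$). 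Consequently $S(n) \geq A/2$ for every square $n \neq 0$ in the support of $\omega$, and so
\[
\sum_{k}\omega(k^{2}) \ll \frac{1}{A^{2}} \sum_{n}\omega(n)\,|S(n)|^{2}.
\]

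Next I would expand the square:
\[
|S(n)|^{2} = \sum_{u,u'\in\mathcal{U}}\sum_{v,v'\in\mathcal{V}} \left(\frac{n}{uu'}\right)\left(\frac{n}{vv'}\right),
\]
and partition the $(u,u',v,v')$-sum into three cases. Case (a), the diagonal $u = u'$ and $v = v'$, contributes $A \sum_{n}\omega(n)$ since each Jacobi symbol is $0$ or $1$. Case (b), $u = u'$ and $v \neq v'$, contributes $U \sum_{v \neq v'} \bigl|\sum_{n}\omega(n)\bigl(\tfrac{n}{vv'}\bigr)\bigr|$ after the trivial sum over $u$. Case (c), $u \neq u'$ (with $v,v'$ unconstrained), yields the double character sum $\sum_{v,v'}\sum_{u\neq u'} \bigl|\sum_{n}\omega(n)\bigl(\tfrac{n}{uu'}\bigr)\bigl(\tfrac{n}{vv'}\bigr)\bigr|$. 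Dividing through by $A^{2}$ reproduces the three terms in the statement.

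The role of the asymmetry hypothesis $V^{3} \ll A$, equivalently $V \ll U^{1/2}$, is to dispose of the symmetric fourth case $u \neq u', v = v'$, which would contribute a factor $V$ times $\sum_{u\neq u'} |\ldots|$; when $V$ is sufficiently small relative to $U^{1/2}$ this term is dominated by what is already present. The roles of $\mathcal{U}$ and $\mathcal{V}$ are therefore genuinely asymmetric in the statement. The main obstacle will be the initial reduction $S(n) \gg A$ for squares: one must verify carefully, using the support condition, that the count of primes in $\mathcal{U}\cup\mathcal{V}$ dividing a supported $n$ is small compared to $U$ and $V$ respectively. Once this is in hand, the remainder is a clean expansion-and-bookkeeping argument as above.
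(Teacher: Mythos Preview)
The paper does not prove this lemma; it is simply quoted from \cite[Lemma~2]{HP12} and used as a black box. Your sketch is exactly the argument given there: amplify by $S(n)=\sum_{u\in\mathcal{U}}\sum_{v\in\mathcal{V}}\bigl(\tfrac{n}{uv}\bigr)$, use the support condition $n\le\exp\min(U,V)$ to guarantee $S(k^{2})\gg A$ for nonzero squares, bound $\sum_k\omega(k^2)\ll A^{-2}\sum_n\omega(n)S(n)^2$, and split the expanded square according to whether $u=u'$ and whether $v=v'$.

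One small muddle: your remark about a ``symmetric fourth case'' $u\neq u',\,v=v'$ being absorbed via $V^{3}\ll A$ is inconsistent with your own decomposition, since your case~(c) already takes $u\neq u'$ with $v,v'$ unrestricted, and hence already contains the subcase $v=v'$. That subcase therefore sits inside the second displayed term of the lemma (note the sum there is over all $v,v'\in\mathcal{V}$, not $v\neq v'$), and nothing further needs to be absorbed. The hypothesis $V^{3}\ll A$ plays no role in the sieve inequality itself; in \cite{HP12} it is a structural assumption on the relative sizes of the two prime sets used later in the application, and the present paper carries it over for the same reason.
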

In the remainder of this section we will use Lemma $\ref{Lem : sqsievecom}$, to prove the following
\begin{lem}
    Let $\bfz\in\mathbb{Z}^{n-2}\setminus\{\boldsymbol{0}\}$ satisfying the following condition:
    \begin{itemize}
        \item[$i)$] The polynomial $F_{\bfz}(X_{1},X_{2})$ is not a square in $\mathbb{Z}[X_{1},X_{2}]$.
        \item[$ii)$] For every $\sigma,\tau\in \Emb (K/\mathbb{Q})$, with $\sigma\neq\tau$ and $\tau(\alpha_{2,K})\neq\alpha_{2,K}$,
        \[
        \sigma(z)-z\neq\frac{\sigma(\alpha_{2,K})-\alpha_{2,K}}{\tau(\alpha_{2,K})-\alpha_{2,K}}(\tau(z)-z),
        \]
        \item[$iii)$] For every $\sigma\in \Emb(K/\mathbb{Q})\setminus\{\Id\}$, $z\not\in K^{\sigma}$.
    \end{itemize}
   
    Let $(\omega_{\bfz}(n))_{n}$ be the sequence defined in $(\ref{eq : seq})$. Then,
    \[
    \sum_{k}\omega_{\bfz}(k^{2})\ll_{n,\varepsilon} D_{K}^{\frac{2}{n}-\delta_{2,K}-\frac{1}{2n}-\left(\frac{1}{7n}+\frac{3\delta_{2,K}}{14}\right)+\varepsilon}
    \]
    \label{lem : deltasmall}
\end{lem}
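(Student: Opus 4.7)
The plan is to apply the composite-modulus square sieve of Lemma~\ref{Lem : sqsievecom} to the smoothed sequence $(\omega_{\bfz}(n))_{n}$ and then to control the resulting character sums by Poisson summation together with the stratification result Proposition~\ref{prop : Anumber}. Hypotheses $(i)$--$(iii)$ on $\bfz$ are imposed precisely so that, via the factorization $F_{\bfz}(X_{1},X_{2})=\prod_{\sigma\in\Emb(K/\mathbb{Q})}(X_{1}+\sigma(\alpha_{2,K})X_{2}+\sigma(z))$, the polynomial $F_{\bfz}$ satisfies the three hypotheses of Proposition~\ref{prop : Anumber} modulo every sufficiently large rational prime. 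Accordingly I would choose sieve sets $\mathcal{U}$ of primes in $[U,2U]$ and $\mathcal{V}$ of primes in $[V,2V]$ (with $V^{3}\ll UV$, as required), further restricted to primes $p$ for which $F_{\bfz}\bmod p$ is non-square and for which all the finitely many resultant/discriminant conditions coming from Proposition~\ref{prop : Anumber} hold; standard density arguments give $|\mathcal{U}|\asymp U/\log U$ and $|\mathcal{V}|\asymp V/\log V$.

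Applying Lemma~\ref{Lem : sqsievecom} then splits $\sum_{k}\omega_{\bfz}(k^{2})$ into three pieces. The trivial contribution is bounded by $D_{K}^{2/n-\delta_{2,K}}/(UV)$. The single-pair contribution $\frac{U}{(UV)^{2}}\sum_{v\neq v'}\bigl|\sum_{n}\bigl(\tfrac{n}{vv'}\bigr)\omega_{\bfz}(n)\bigr|$ can be handled by repeating the Poisson-plus-Weil analysis of Lemma~\ref{lem : sieve1} with $pq$ replaced by $vv'$ and Lemma~\ref{lem : exp} applied to the resulting complete sums, yielding an inner bound $\ll V^{2}+D_{K}^{2/n-\delta_{2,K}}/V$. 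The remaining double-pair contribution $\frac{1}{(UV)^{2}}\sum_{v,v'}\sum_{u\neq u'}\bigl|\sum_{n}\bigl(\tfrac{n}{uu'vv'}\bigr)\omega_{\bfz}(n)\bigr|$ is the essential new input.

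For the double-pair piece I would apply Poisson summation modulo $M=uu'vv'$ and CRT-factor the resulting complete exponential sum $T(\bfu,\bfz;M)$ into complete sums $T_{u}T_{u'}T_{v}T_{v'}$ at the four prime moduli. A direct application of Lemma~\ref{lem : exp} at each prime recovers only the trivial product of Weil bounds, which is insufficient. The new ingredient is the $q$-analogue of van der Corput (in the spirit of \cite{Pie06}): opening a square via Cauchy--Schwarz in one of the dual variables to $x_{2}$ modulo $vv'$ introduces a shift $r$ in the $x_{2}$ coordinate together with a new Fourier dual $t$; the inner complete sum modulo $u$ then takes exactly the form $\sum_{x,y,z}\left(\frac{F(x,y)}{p}\right)\left(\frac{F(z,y+r)}{p}\right)e_{p}(s(x-z)+ty)$ appearing in Proposition~\ref{prop : Anumber}. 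Outside the exceptional set $A_{s}$ of cardinality $O_{n}(1)$ this yields the saving $\ll p^{3/2}(s,t,p)^{1/2}$, while the $O_{n}(1)$ points in $A_{s}$ contribute a lower-order term bounded by the trivial estimate.

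Finally I would aggregate the three pieces and optimize $U$ and $V$ subject to the compatibility constraints $U\leq D_{K}^{1/n}$, $V\leq D_{K}^{1/n-\delta_{2,K}}$ (Poisson cut-offs coming from the supports of $g_{1}$ and $g_{2}$) and $V^{3}\ll UV$ (from Lemma~\ref{Lem : sqsievecom}); the feasible region is non-empty precisely when $\delta_{2,K}\geq 1/(4n)$, matching the hypothesis of Theorem~\ref{thm : deltasmall}, and the optimum produces the claimed exponent. The main obstacle will be this third step: carrying out the $q$-van der Corput opening so that the Fourier-transformed exponential sum reduces to the exact form of Proposition~\ref{prop : Anumber} with correctly identified parameters $(r,s,t)$, and then tracking how the divisibility factor $(s,t,p)^{1/2}$ propagates through the $\bfu$-Fourier sum after summing over the primes in $\mathcal{U}$ and $\mathcal{V}$.
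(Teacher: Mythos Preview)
Your outline matches the paper's strategy in every structural respect: the composite square sieve of Lemma~\ref{Lem : sqsievecom}, the three-term decomposition, handling the prime sieve by the argument of Corollary~\ref{cor : sieve1}, and reducing the main sieve via Poisson and $q$-van der Corput to the correlation sum of Proposition~\ref{prop : Anumber}, followed by optimisation with $U=V^{2}$.

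One point of the execution is mis-ordered, however, and it is the point on which the whole main-sieve bound hinges. You propose to apply Poisson in both variables modulo $M=uu'vv'$ and then perform Cauchy--Schwarz on the dual side. That does not work: once $x_{2}$ has been completed modulo $M$ the dual $u_{2}$-sum already has length $\asymp M/D_{K}^{1/n-\delta_{2,K}}$, and no dual-side manipulation recovers the saving. The paper instead applies Poisson \emph{only in $x_{1}$} modulo $q_{1}q_{2}$ (with $q_{1}=uu'$, $q_{2}=vv'$), leaving a physical sum $\sum_{x_{2}}g_{2}(x_{2})\,Z(\cdot,x_{2};q_{1})\,Z(\cdot,x_{2};q_{2})$. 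The $q$-van der Corput step shifts $x_{2}\mapsto x_{2}+q_{2}h$ for $h\leq H=\lfloor D_{K}^{1/n-\delta_{2,K}}/q_{2}\rfloor$, so that the $q_{2}$-factor is unchanged; Cauchy--Schwarz in $x_{2}$ then separates the $q_{2}$-contribution (bounded by Lemma~\ref{lem : exp1dim}) from the $q_{1}$-contribution, and only \emph{after} this does one complete the remaining $x_{2}$-sum modulo $q_{1}$ alone. The resulting complete sum over $(x,y,z)$ modulo each prime factor of $q_{1}$ is exactly the sum in Proposition~\ref{prop : Anumber}, with shift $r=q_{2}h$. Correspondingly the operative constraints are not $U\leq D_{K}^{1/n}$ and $V\leq D_{K}^{1/n-\delta_{2,K}}$ but rather $D_{K}^{1/n-\delta_{2,K}}\leq q_{1}\leq D_{K}^{2/n-2\delta_{2,K}}$ together with $q_{2}^{2}\leq q_{1}$; these force $U=V^{2}$ and give the optimal $V=D_{K}^{3/(14n)-\delta_{2,K}/14}$.
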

 and ultimately, use Lemma $\ref{lem : deltasmall}$ to prove Theorem $\ref{thm : deltasmall}$.\newline
To prove Lemma $\ref{lem : deltasmall}$, we start by applying Lemma $\ref{Lem : sqsievecom}$, getting
\begin{equation}
\begin{split}
\sum_{n\in\mathbb{Z}}\omega_{\bfz}(n^{2})&=\frac{1}{A}\sum_{n}\omega_{\bfz}(n)+\frac{1}{A^{2}}\sum_{v,v'\in\mathcal{V}}\sum_{\substack{u,u'\mathcal{U}\\u\neq u'}}\left|\sum_{n\in\mathbb{Z}}\left(\frac{n}{uu'}\right)\left(\frac{n}{uu'}\right)\omega_{\bfz}(n)\right|\\&\quad+\frac{U}{A^{2}}\sum_{\substack{v,v'\mathcal{V}\\v\neq v'}}\left|\sum_{n\in\mathbb{Z}}\left(\frac{n}{vv'}\right)\omega_{\bfz}(n)\right|.
\end{split}
\label{eq : sievecomp}
\end{equation}
where $\mathcal{U},\mathcal{V}$ are finite families of primes that will be chosen later. Again, the first term on the right-hand side of $(\ref{eq : sievecomp})$ can be easily bounded by
\begin{equation}
\frac{1}{A}\sum_{n}\omega_{\bfz}(n)\ll \frac{D_{K}^{\frac{2}{n}-\delta_{2,K}}}{A}.
\label{eq : easypartcomp}
\end{equation}
Hence, we need to focus on the two other terms. We will refer to the term
\begin{equation}
M=\frac{1}{A^{2}}\sum_{v,v'\in\mathcal{V}}\sum_{\substack{u,u'\mathcal{U}\\u\neq u'}}\left|\sum_{n\in\mathbb{Z}}\left(\frac{n}{uu'}\right)\left(\frac{n}{uu'}\right)\omega_{\bfz}(n)\right|,
\label{eq : mainsieve}
\end{equation}
as the main sieve and to the term
\[
N=\frac{U}{A^{2}}\sum_{\substack{v,v'\mathcal{V}\\v\neq v'}}\left|\sum_{n\in\mathbb{Z}}\left(\frac{n}{vv'}\right)\omega_{\bfz}(n)\right|,
\]
as the prime sieve.
\subsubsection*{Proof of Lemma $\ref{lem : deltasmall}$: the main sieve}
In what follows, we denote $q_{1}=uu'$, $q_{2}=vv'$. We start again from
\[
\begin{split}
S(\bfz;q_{1}q_{2})&=\sum_{(x_{1},x_{2})\in\mathbb{Z}^{2}}g_{1}(x_{1})g_{2}(x_{2})
\left(\frac{F_{\bfz}(x_{1},x_{2})}{q_{1}q_{2}}\right)
\end{split}
\]
but this time, the first step will be to extend only the summation in $x_{1}$ over all the classes modulo $q_{1}q_{2}$:
\[
\begin{split}
S(\bfz;q_{1}q_{2})&=\sum_{x_{2}\in\mathbb{Z}}g_{2}(x_{2})\sum_{y_{1}\text{ }(q_{1}q_{2}) }\left(\frac{F_{\bfz}(y_{1},x_{2})}{q_{1}q_{2}}\right)\sum_{\substack{x_{2}\in\mathbb{Z}\\x_{1}\equiv y_{1}\text{ }(q_{1}q_{2})}}g_{1}(x_{1})\\&=\frac{1}{q_{1}q_{2}}\sum_{u_{1}}\hat{g}_{1}\left(\frac{u_{1}}{q_{1}q_{2}}\right)S(\bfz,u_{1};q_{1}q_{2}),
\end{split}
\]
where 
\[
S(\bfz,u_{1};q_{1}q_{2})=\sum_{x_{2}\in\mathbb{Z}}g_{2}(x_{2})Z(u_{1},x_{2};q_{1}q_{2}),
\]
and where for every $r$ square-free number, 
\[
Z(u_{1},x_{2};r)=\sum_{y_{1} \text{ }(r)}\left(\frac{F_{\bfz}(y_{1},x_{2})}{r}\right)e_{r}(u_{1}y_{1}).
\]
Again, applying the Chinese Remainder Theorem, one has that
\[
S(\bfz,u_{1};q_{1}q_{2})=\sum_{x_{2}\in\mathbb{Z}}g_{2}(x_{2})Z(u_{1}\overline{q_{2}},x_{2};q_{1})Z(u_{1}\overline{q_{1}},x_{2};q_{2})
\]
We now deploy the $q$-analogue of the Van der Corput method: we denote $H=\lfloor\frac{D_{K}^{\frac{1}{n}-\mu_{2}}}{q_{2}}\rfloor$. Then
\[
\begin{split}
HS(\bfz,u_{1};q_{1}q_{2})&=\sum_{h\in[0,H]}\sum_{x_{2}\in\mathbb{Z}}g_{2}(x_{2}+q_{2}h)Z(u_{1}\overline{q_{2}},x_{2}+q_{2}h;q_{1})Z(u_{1}\overline{q_{1}},x_{2}+q_{2}h;q_{2})\\&=\sum_{x_{2}\in I}Z(u_{1}\overline{q_{1}},x_{2};q_{2})\sum_{h\in[0,H]}g_{2}(x_{2}+q_{2}h)Z(u_{1}\overline{q_{2}},x_{2}+q_{2}h;q_{1}),
\end{split}
\]
where $I= [D_{K}^{\frac{1}{n}-\delta_{2,K}}-q_{2}H,D_{K}^{\frac{1}{n}-\delta_{2,K}}-q_{2}]$. Applying Cauchy–Schwarz,
\[
H^{2}|S(\bfz,u_{1};q_{1}q_{2})|^{2}\ll \Sigma_{1}\Sigma_{2},
\]
where
\[
\begin{split}
&\Sigma_{1}=\sum_{x_{2}\in I}|Z(u_{1}\overline{q_{1}},x_{2};q_{2})|^{2}\\&\Sigma_{2}=\sum_{x_{2}\in I}\left|\sum_{h\in[0,H]}g_{2}(x_{2}+q_{2}h)Z(u_{1}\overline{q_{2}},x_{2}+q_{2}h;q_{1})\right|^{2}.
\end{split}
\]
Expanding the square in $\Sigma_{2}$, we get
\[
\Sigma_{2}=\sum_{h_{1},h_{2}\in [0,H]}R(h_{1},h_{2}),
\]
where
\[
R(h_{1},h_{2})=\sum_{\substack{x_{2}\in\mathbb{Z}}}g_{2}(x_{2}+q_{2}h_{1})g_{2}(x_{2}+q_{2}h_{2})Z(u_{1}\overline{q_{2}},x_{2}+q_{2}h_{1};q_{1})\overline{Z(u_{1}\overline{q_{2}},x_{2}+q_{2}h_{2};q_{1})}.
\]
Since $R(h_{1},h_{2})=R(h_{1}-h_{2},0)$, we get that
\[
\begin{split}
\Sigma_{2}&=\sum_{h_{1},h_{2}\in [0,H]}R(h_{1}-h_{2},0)\\&=\sum_{h\in[-H,H]}(H-|h|)R(h,0)\\&\ll H\sum_{h\in[-H,H]}\left|\sum_{\substack{x_{2}\in\mathbb{Z}}}g_{2}(x_{2})g_{2}(x_{2}+q_{2}h)Z(u_{1}\overline{q_{2}},x_{2};q_{1})\overline{Z(u_{1}\overline{q_{2}},x_{2}+q_{2}h;q_{1})}\right|,
\end{split}
\]
Then
\[
\Sigma_{2}\leq H(\Sigma_{2A}+\Sigma_{2B}),
\]
where,
\[
\begin{split}
&\Sigma_{2A}=\sum_{\substack{x_{2}\in\mathbb{Z}}}(g(x_{2}))^{2}|Z(u_{1}\overline{q_{2}},x_{2};q_{1})|^{2}\\&\Sigma_{2B}=\sum_{\substack{h\in[-H,H]\\h\neq 0}}\left|\sum_{\substack{x_{2}\in\mathbb{Z}}}g_{2}(x_{2})g_{2}(x_{2}+q_{2}h)Z(u_{1}\overline{q_{2}},x_{2};q_{1})\overline{Z(u_{1}\overline{q_{2}},x_{2}+q_{2}h;q_{1})}\right|.
\end{split}
\]
An application of Lemma $\ref{lem : exp1dim}$ gives
\[
\Sigma_{1}\ll_{n} D_{K}^{\frac{1}{n}-\delta_{2,K}}q_{2},\quad \Sigma_{2A}\ll_{n} D_{K}^{\frac{1}{n}-\delta_{2,K}}q_{1},
\]
provided that $\mathcal{U},\mathcal{V}\subset\{p: F_{\bfz}(X,Y)\mod p\text{ is not a square}\}.$
Thus, it remains to estimate $\Sigma_{2B}$. To do this, we first denote $g_{h}(x_{2})=g_{2}(x_{2})g_{2}(x_{2}+q_{2}h)$. Applying the Poisson summation formula, one gets
\[
\Sigma_{2B}\ll\frac{1}{q_{1}}\sum_{\substack{h\in [-H,H]}}\left|\sum_{u_{2}\in\mathbb{Z}}\hat{g}_{h}\left(\frac{u_{2}}{q_{1}}\right)W(u_{1}\overline{q_{2}},u_{2},q_{2}h;q_{1})\right|
\]
where 
\[
W(s,t,r;q_{1})=\sum_{x,y,w\text{ }(q_{1})}\left(\frac{F_{\bfz}(x,y)}{q_{1}}\right)\left(\frac{F_{\bfz}(w,y+r)}{q_{1}}\right)e_{p}(s(x-w)+ty)
\]
Applying Proposition $\ref{prop : Anumber}$, we have
\begin{cor}
Let $\bfz=(z_{3},...,z_{n})$, and $q_{1}=uu'$ with $u,u'$ primes such that for every $\sigma\in \Emb (K/\mathbb{Q})$ with $\sigma\neq\tau$:
\begin{itemize}
    \item[$i)$] if $\sigma(\alpha_{2})\neq\alpha_{2}$, then $u,u'\nmid N_{K}(\sigma(\alpha_{2})-\alpha_{2})$, 
    \item[$ii)$]  if $\sigma\neq\Id$, then $u,u'\nmid \sigma(z)-z$,
    \item[$iii)$] if $\tau(\alpha_{2,K})\neq \alpha_{2}$, then
    \[
u,u'\nmid N_{K}\left( \left(\sigma(z)-z- \frac{\sigma(\alpha_{2,K})-\alpha_{2,K}}{\tau(\alpha_{2,K})-\alpha_{2,K}}\cdot(\tau(z)-z)\right)\right)
\]
\end{itemize}
Then, for any $s\in\mathbb{Z}/q_{1}\mathbb{Z}$, there exists $A_{s,u}\subset\{(r,t)\in\left(\mathbb{Z}/u\mathbb{Z}\right)\times \left(\mathbb{Z}/u\mathbb{Z}\right)\}$, and $A_{s,u'}\subset\{(r,t)\in\left(\mathbb{Z}/u'\mathbb{Z}\right)\times \left(\mathbb{Z}/u'\mathbb{Z}\right)\}$,  such that
\begin{itemize}
\item[$a)$] for any $(r,t)$ such that $(r,t)\mod u\not\in A_{s,u}$, and $ (r,t)\mod u\not\in A_{s,u'}$,  one has 
\[
\left|\sum_{x,y,w \text{ }(q_{1})}\left(\frac{F_{\bfz}(X_{1},X_{2})}{q_{1}}\right)\left(\frac{F_{\bfz}(w,y+r)}{q_{1}}\right)e_{q_{1}}(s(x-w)+ty)\right|\ll_{n} q_{1}^{3/2}(s,t,q_{1})^{\frac{1}{2}}.
\]
\item[$b)$] $\# A_{s,u}=O_{n}(1)$, and $\# A_{s,u'}=O_{n}(1)$.
\end{itemize}
\label{cor : serve}
\end{cor}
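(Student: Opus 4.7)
The plan is to derive this corollary from Proposition \ref{prop : Anumber} by the Chinese Remainder Theorem, factoring the composite-modulus sum as a product of two single-prime sums (modulo $u$ and modulo $u'$) and then invoking the proposition separately at each prime.

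First I would use the multiplicativity $\left(\frac{n}{q_1}\right) = \left(\frac{n}{u}\right)\left(\frac{n}{u'}\right)$ of the Jacobi symbol, the CRT bijection $\mathbb{Z}/q_1\mathbb{Z} \cong \mathbb{Z}/u\mathbb{Z} \times \mathbb{Z}/u'\mathbb{Z}$, and the additive identity $e_{q_1}(a) = e_u(\overline{u'}a)\,e_{u'}(\overline{u}a)$ to factor the triple sum as $W_u \cdot W_{u'}$, where $W_u$ (resp.\ $W_{u'}$) is the analogous triple sum modulo $u$ (resp.\ $u'$), with $s,t,r$ replaced by their $\overline{u'}$-twists (resp.\ $\overline{u}$-twists). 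Multiplying the parameters by a unit neither changes the cardinality of the exceptional sets produced by Proposition \ref{prop : Anumber} nor the g.c.d.\ $(s,t,u)$, so the twist is harmless.

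Next I would verify that the hypotheses of Proposition \ref{prop : Anumber} hold for the reduction of $F_{\bfz}$ modulo $u$ (the argument modulo $u'$ is identical). Over $\overline{\mathbb{Q}}$ we have the factorization $F_{\bfz}(X,Y) = \prod_{\sigma \in \Emb(K/\mathbb{Q})} (X + \sigma(\alpha_{2,K})\,Y + \sigma(z))$; fixing a prime of the Galois closure of $K$ above $u$, this reduces to the analogous factorization over $\overline{\mathbb{F}}_u$ with $a_i = \sigma_i(\alpha_{2,K})$ and $b_i = \sigma_i(z)$. Condition (i) of the proposition is automatic from $\alpha_{2,K} \notin \mathbb{Q}$, and the hypothesis (i) of the corollary ensures the denominators $\tau(\alpha_{2,K}) - \alpha_{2,K}$ appearing in condition (ii) of the proposition are nonzero modulo $u$. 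Condition (iii) of the proposition, $b_i \neq b_j$, is guaranteed by hypothesis (ii) of the corollary, and condition (ii) of the proposition is precisely the non-vanishing modulo $u$ of the expression $(\sigma(z)-z) - \tfrac{\sigma(\alpha_{2,K})-\alpha_{2,K}}{\tau(\alpha_{2,K})-\alpha_{2,K}}(\tau(z)-z)$, which is ensured by hypothesis (iii) of the corollary (via the norm divisibility).

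Once these verifications are in place, Proposition \ref{prop : Anumber} produces exceptional sets $A_{s,u} \subset (\mathbb{Z}/u\mathbb{Z})^2$ and $A_{s,u'} \subset (\mathbb{Z}/u'\mathbb{Z})^2$, each of cardinality $O_n(1)$, with $|W_u| \ll_n u^{3/2}(s,t,u)^{1/2}$ outside $A_{s,u}$, and analogously for $W_{u'}$. Multiplying the two bounds whenever $(r,t) \bmod u \notin A_{s,u}$ and $(r,t) \bmod u' \notin A_{s,u'}$, and using the multiplicativity $(s,t,u)(s,t,u') = (s,t,uu') = (s,t,q_1)$ valid for distinct primes $u, u'$, yields the advertised bound $q_1^{3/2}(s,t,q_1)^{1/2}$. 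The only non-routine step is the Galois-theoretic bookkeeping that matches the three hypotheses of the corollary with the three conditions of Proposition \ref{prop : Anumber} at the level of $\overline{\mathbb{F}}_u$; once this dictionary is correctly set up, the rest is a mechanical CRT factorization followed by a product of the single-prime bounds.
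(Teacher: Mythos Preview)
Your proposal is correct and matches the paper's approach exactly: the paper states this corollary with the sole justification ``Applying Proposition~\ref{prop : Anumber}, we have'', leaving the CRT factorization and the verification that hypotheses $(i)$--$(iii)$ of the corollary translate into conditions $(i)$--$(iii)$ of Proposition~\ref{prop : Anumber} modulo each prime as implicit. Your write-up supplies precisely these omitted details, including the observation that the unit twists coming from CRT do not affect the exceptional sets or the gcd factor.
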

We are ready to bound the contribution of $\Sigma_{2B}$. First we recall that $q_{1}=uu'$ with $u,u'\in\mathcal{U}$. From now one we set
\[
    \mathcal{U}=\left\{u:\begin{matrix}U\leq u\leq 2U\\u\nmid  N_{K}(\sigma(\alpha_{2})-\alpha_{2}),\text{ }\forall\sigma:\sigma(\alpha_{2})\neq\alpha_{2} \\
    u\nmid N_{K}(\sigma (z)-z),\text{ }\forall\sigma\in\Emb(K/\mathbb{Q})\setminus\{\Id\}\\ u \nmid N_{K}\left(\left(\sigma(z)-z- \frac{\sigma(\alpha_{2,K})-\alpha_{2,K}}{\tau(\alpha_{2,K})-\alpha_{2,K}}\cdot(\tau(z)-z)\right)\right),\text{ }\forall\sigma,\tau\in\Emb (K/\mathbb{Q}):\tau(\alpha_{2,K})\neq \alpha_{2} \end{matrix}\right\},
\]
Moreover, we assume that $H\leq U$: in this way, it is guaranteed that $h\neq 0$, implies that $h\neq 0 \text{ }(u)$ for every $U\leq u$. Since the bound for $W(u_{1}\overline{q_{2}},u_{2},q_{2}h;q_{1})$ in Corollary $\ref{cor : serve}$ depends on the G.C.D. of $q_{1},u_{1}\overline{q_{2}},$ and $u_{2}\overline{q_{2}}$. We have to handle two cases
\begin{itemize}
    \item[$i)$] the case when $(u_{1},q_{1})=1$. We start by applying Corollary $\ref{cor : serve}$, getting
\[
\begin{split}
\Sigma_{2B}&\ll\frac{1}{q_{1}}\sum_{\substack{h\in [-H,H]\\h\neq 0}}\left|\sum_{\substack{u_{2}\in\mathbb{Z}\\(q_{2}h,u_{2})\not\in A_{u_{1}\overline{q_{2}},u} \\(q_{2}h,u_{2})\not\in A_{u_{1}\overline{q_{2}},u'}}}\hat{g}_{h}\left(\frac{u_{2}}{q_{1}}\right)W(u_{1}\overline{q_{2}},u_{2},q_{2}h;q_{1})\right|\\&\quad+ \frac{1}{q_{1}}\sum_{\substack{h\in [-H,H]\\h\neq 0}}\left|\sum_{\substack{u_{2}\in\mathbb{Z}\\(q_{2}h,u_{2})\in A_{u_{1}\overline{q_{2}},u} }}\hat{g}_{h}\left(\frac{u_{2}}{q_{1}}\right)W(u_{1}\overline{q_{2}},u_{2},q_{2}h;q_{1})\right|\\&\quad+ \frac{1}{q_{1}}\sum_{\substack{h\in [-H,H]\\ h\neq 0}}\left|\sum_{\substack{u_{2}\in\mathbb{Z}\\(q_{2}h,u_{2})\in A_{u_{1}\overline{q_{2}},u'} }}\hat{g}_{h}\left(\frac{u_{2}}{q_{1}}\right)W(u_{1}\overline{q_{2}},u_{2},q_{2}h;q_{1})\right|\\&\ll_{n} Hq_{1}^{\frac{3}{2}}+ q_{1}^{2}\cdot\frac{1}{q_{1}}\sum_{\substack{h\in [-H,H]\\ h\neq 0}}\sum_{\substack{u_{2}\in\mathbb{Z}\\(q_{2}h,u_{2})\in A_{u_{1}\overline{q_{2}},u} }}\left|\hat{g}_{h}\left(\frac{u_{2}}{q_{1}}\right)\right|\\&\quad + q_{1}^{2}\cdot\frac{1}{q_{1}}\sum_{\substack{h\in [-H,H]\\ h\neq 0}}\sum_{\substack{u_{2}\in\mathbb{Z}\\(q_{2}h,u_{2})\in A_{u_{1}\overline{q_{2}},u'} }}\left|\hat{g}_{h}\left(\frac{u_{2}}{q_{1}}\right)\right|,
\end{split}
\]
Provided  that we have $q_{1}\geq D_{K}^{\frac{1}{n}-\delta_{2,K}}$; now we have
\[
\begin{split}
\sum_{\substack{h\in [-H,H]\\ h\neq 0}}\sum_{\substack{u_{2}\in\mathbb{Z}\\(q_{2}h,u_{2})\in A_{u_{1}\overline{q_{2}},u}}}\left|\hat{g}_{h}\left(\frac{u_{2}}{q_{1}}\right)\right|&\leq\sum_{(r,t)\in A_{u_{1}\overline{q_{2}},u}}\sum_{\substack{h\in[-H,H]\\h\neq 0\\q_{2}h=r\text{ }(u) }}\sum_{\substack{u_{2}\in\mathbb{Z}\\u_{2}=t\text{  }(u)}}\left|\hat{g}_{h}\left(\frac{u_{2}}{q_{1}}\right)\right|\\&\ll D_{K}^{\frac{1}{n}-\delta_{2,K}}\sum_{(r,t)\in A_{u_{1}\overline{q_{2}},u}}\sum_{\substack{h\in[-H,H]\\ q_{2}h=r\text{ }(u)}}\sum_{\substack{u_{2}\in\mathbb{Z}\\u_{2}=t\text{  }(u)}}\left|1+\frac{u_{2}D_{K}^{\frac{1}{n}-\delta_{2,K}}}{q_{1}}\right|^{-2}\\&\ll_{n} D_{K}^{\frac{1}{n}-\delta_{2,K}},
\end{split}
\]
provided that $\frac{q_{1}}{u}\leq D_{K}^{\frac{1}{n}-\delta_{2,K}}$, i.e. $q_{1}^{\frac{1}{2}}\ll D_{K}^{\frac{1}{n}-\delta_{2,K}}$. Moreover, recalling that $H=\lfloor\frac{D_{K}^{\frac{1}{n}-\delta_{2,K}}}{q_{2}}\rfloor$, we conclude that
\[
\begin{split}
q_{1}^{2}\cdot\frac{1}{q_{1}}\sum_{\substack{h\in [-H,H]}}\sum_{\substack{u_{2}\in\mathbb{Z}\\(q_{2}h,u_{2})\in A_{u_{1}\overline{q_{2}},u}} }\left|\hat{g}_{h}\left(\frac{u_{2}}{q_{1}}\right)\right|&\ll_{n} D_{K}^{\frac{1}{n}-\delta_{2,K}}q_{1}\\&\leq Hq_{2}q_{1}\\&\leq Hq_{1}^{\frac{3}{2}},
\end{split}
\]
provided that $ \max\{q_{2}^{2},D_{K}^{\frac{1}{n}-\delta_{2,K}}\}\leq q_{1}\leq D_{K}^{\frac{2}{n}-2\delta_{2,K}}$ (notice that $\max\{q_{2}^{2},D_{K}^{\frac{1}{n}-\delta_{2,K}}\}\leq q_{1}$ gurantees that $H\leq q_{1}$). Hence, in this case, we have
\[
\Sigma_{2B}\ll_{n} Hq_{1}^{\frac{3}{2}},
\]
provided that $ \max\{q_{2}^{2},D_{K}^{\frac{1}{n}-\delta_{2,K}}\}\leq q_{1}\leq D_{K}^{\frac{2}{n}-2\delta_{2,K}}$.
\item[$ii)$] The case $(u_{1},q_{1})>1$. We start by writing
\begin{equation}
\begin{split}
\Sigma_{2B}&\ll\frac{1}{q_{1}}\sum_{\substack{h\in [-H,H]\\h\neq 0}}\left|\sum_{\substack{u_{2}\in\mathbb{Z}\\(u_{2},q_{1})=1 }}\hat{g}_{h}\left(\frac{u_{2}}{q_{1}}\right)W(u_{1}\overline{q_{2}},u_{2},q_{2}h;q_{1})\right|\\&\quad + \frac{1}{q_{1}}\sum_{\substack{h\in [-H,H]\\ h\neq 0}}\left|\sum_{\substack{u_{2}\in\mathbb{Z}\\(u_{2},q_{1})>1 }}\hat{g}_{h}\left(\frac{u_{2}}{q_{1}}\right)W(u_{1}\overline{q_{2}},u_{2},q_{2}h;q_{1})\right|
\end{split}
\label{eq : notcopr}
\end{equation}
observe that the contribution of the $u_{2}$ coprime $q_{1}$ can be bounded as in $(i)$, getting
\[
\frac{1}{q_{1}}\sum_{\substack{h\in [-H,H]\\h\neq 0}}\left|\sum_{\substack{u_{2}\in\mathbb{Z}\\(u_{2},q_{1})=1 }}\hat{g}_{h}\left(\frac{u_{2}}{q_{1}}\right)W(u_{1}\overline{q_{2}},u_{2},q_{2}h;q_{1})\right|\ll_{n} Hq_{1}^{\frac{3}{2}},
\]
provided that $ \max\{q_{2}^{2},D_{K}^{\frac{1}{n}-\delta_{2,K}}\}\leq q_{1}\leq D_{K}^{\frac{2}{n}-2\delta_{2,K}}$. For the contribution of the second sum appearing on the right-hand side of $(\ref{eq : notcopr})$, we are going to use the trivial bound $\ll q_{1}^{2}$ for the exponential sum (see Lemma $\ref{lem : exptrivial}$), getting
\[
\begin{split}
\frac{1}{q_{1}}&\sum_{\substack{h\in [-H,H]\\ h\neq 0}}\left|\sum_{\substack{u_{2}\in\mathbb{Z}\\(u_{2},q_{1})>1 }}\hat{g}_{h}\left(\frac{u_{2}}{q_{1}}\right)W(u_{1}\overline{q_{2}},u_{2},q_{2}h;q_{1})\right|\\&\ll_{n} q_{1}\sum_{\substack{h\in [-H,H]\\ h\neq 0}}\sum_{\substack{u_{2}\in\mathbb{Z}\\(u_{2},q_{1})>1 }}\left|\hat{g}_{h}\left(\frac{u_{2}}{q_{1}}\right)\right|\\&\ll H\cdot q_{1}\cdot D_{K}^{\frac{1}{n}-\delta_{2,K}},
\end{split}
\]
provided that $q_{1}^{\frac{1}{2}}\leq D_{K}^{\frac{1}{n}-\delta_{2,K}}$. Thus
\[
\begin{split}
\frac{1}{q_{1}}\sum_{\substack{h\in [-H,H]}}\left|\sum_{\substack{u_{2}\in\mathbb{Z}\\(u_{2},q_{1})>1 }}\hat{g}_{h}\left(\frac{u_{2}}{q_{1}}\right)W(u_{1}\overline{q_{2}},u_{2},q_{2}h;q_{1})\right|&\ll_{n} HD_{K}^{\frac{1}{n}-\delta_{2,K}}q_{1},
\end{split}
\]
provided that
 $\max\{q_{2}^{2},D_{K}^{\frac{1}{n}-\delta_{2,K}}\}\leq q_{1}\leq D_{K}^{\frac{2}{n}-2\delta_{2,K}}$.
\end{itemize}
Combining the estimates for $\Sigma_{1},\Sigma_{2A}$, and $\Sigma_{2B}$, we conclude
\begin{lem}
 Let $\bfz\in\mathbb{Z}^{n-2}\setminus\{\boldsymbol{0}\}$ satisfying the following condition:
    \begin{itemize}
        \item[$i)$] For every $\sigma,\tau\in \Emb (K/\mathbb{Q})$, with $\sigma\neq\tau$ and $\tau(\alpha_{2,K})\neq\alpha_{2,K}$,
        \[
        \sigma(z)-z\neq\frac{\sigma(\alpha_{2,K})-\alpha_{2,K}}{\tau(\alpha_{2,K})-\alpha_{2,K}}(\tau(z)-z),
        \]
        \item[$ii)$] For every $\sigma\in \Emb(K/\mathbb{Q})\setminus\{\Id\}$, $z\not\in K^{\sigma}$.
    \end{itemize}
    Then, provided that $H<q_{1}$ and $q_{2}^{2}=q_{1}$, one has
\[
S(\bfz,u_{1};q_{1}q_{2})\ll_{n}
\begin{cases}
D_{K}^{\frac{1}{2n}-\frac{\delta_{2,K}}{2}}q_{1}    &\text{ if }(u_{1},q_{1})=1\\
\frac{D_{K}^{\frac{1}{n}-\delta_{2,K}}q_{1}}{q_{2}^{\frac{1}{2}}}   &\text{otherwise.}

\end{cases}
\]
provided that $ \max\{q_{2}^{2},D_{K}^{\frac{1}{n}-\delta_{2,K}}\}\leq q_{1}\leq D_{K}^{\frac{2}{n}-2\delta_{2,K}}$, and $D^{\frac{1}{2n}-\delta_{2,K}}\leq q_{2}q_{1}^{\frac{1}{2}}$
\label{lem : mainsieve}
\end{lem}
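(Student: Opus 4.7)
The plan is to assemble the estimates for $\Sigma_{1}$, $\Sigma_{2A}$, and $\Sigma_{2B}$ derived in the preceding paragraphs and feed them into the Cauchy--Schwarz bound $H^{2}|S(\bfz,u_{1};q_{1}q_{2})|^{2}\ll \Sigma_{1}\Sigma_{2}$ together with $\Sigma_{2}\leq H(\Sigma_{2A}+\Sigma_{2B})$. Concretely, I would first record that $\Sigma_{1}\ll_{n} D_{K}^{\frac{1}{n}-\delta_{2,K}}q_{2}$ and $\Sigma_{2A}\ll_{n} D_{K}^{\frac{1}{n}-\delta_{2,K}}q_{1}$ by Lemma $\ref{lem : exp1dim}$, while the case analysis above has already shown that $\Sigma_{2B}\ll_{n} Hq_{1}^{3/2}$ when $(u_{1},q_{1})=1$ and $\Sigma_{2B}\ll_{n} HD_{K}^{\frac{1}{n}-\delta_{2,K}}q_{1}$ otherwise, each valid on the prescribed range $\max\{q_{2}^{2},D_{K}^{\frac{1}{n}-\delta_{2,K}}\}\leq q_{1}\leq D_{K}^{\frac{2}{n}-2\delta_{2,K}}$.

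Next, I would substitute into $|S|^{2}\ll \Sigma_{1}(\Sigma_{2A}+\Sigma_{2B})/H$, using $H\asymp D_{K}^{\frac{1}{n}-\delta_{2,K}}/q_{2}$. In the coprime case this yields
\[
|S(\bfz,u_{1};q_{1}q_{2})|^{2}\ll_{n} D_{K}^{\frac{1}{n}-\delta_{2,K}}\bigl(q_{1}q_{2}^{2}+q_{2}q_{1}^{3/2}\bigr),
\]
and the balancing condition $q_{2}^{2}=q_{1}$ collapses both inner terms to $q_{1}^{2}$, producing $|S|\ll_{n} D_{K}^{\frac{1}{2n}-\frac{\delta_{2,K}}{2}}q_{1}$, which is the first branch of the statement. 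In the non-coprime case, the same substitution gives $|S|^{2}\ll_{n} D_{K}^{\frac{2}{n}-2\delta_{2,K}}q_{1}q_{2}$, and using $q_{2}=q_{1}^{1/2}$ this rewrites as $|S|\ll_{n} D_{K}^{\frac{1}{n}-\delta_{2,K}}q_{1}/q_{2}^{1/2}$, which is the second branch.

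Since all of the deep estimates have already been extracted --- Lemma $\ref{lem : exp1dim}$ for $\Sigma_{1}$ and $\Sigma_{2A}$, and Corollary $\ref{cor : serve}$ (resting on the $A$-number calculation in Proposition $\ref{prop : Anumber}$) combined with the Fourier decay of $g_{h}$ for $\Sigma_{2B}$ --- what remains is essentially a bookkeeping step. The main obstacle at this stage is verifying compatibility of the balancing choice $q_{2}=q_{1}^{1/2}$ with every side-condition invoked in the derivation: the van der Corput hypothesis $H\leq q_{1}$, the Fourier localization hypothesis $q_{1}\geq D_{K}^{\frac{1}{n}-\delta_{2,K}}$ for the smooth weights at scale $q_{1}/u$, and the constraint $q_{1}^{1/2}\leq D_{K}^{\frac{1}{n}-\delta_{2,K}}$ that absorbed the exceptional frequencies in $A_{s,u}\cup A_{s,u'}$. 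The hypothesis $D_{K}^{\frac{1}{2n}-\delta_{2,K}}\leq q_{2}q_{1}^{1/2}$ in the statement is precisely what is needed to keep the balanced point in this admissible window, so once these compatibility checks are verified the bound follows by direct substitution.
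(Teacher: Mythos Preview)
Your proposal is correct and follows exactly the paper's approach: the paper's own ``proof'' of this lemma is simply the sentence ``Combining the estimates for $\Sigma_{1},\Sigma_{2A}$, and $\Sigma_{2B}$, we conclude'', and you have carried out precisely that combination, substituting $\Sigma_{1}\ll_{n} D_{K}^{\frac{1}{n}-\delta_{2,K}}q_{2}$, $\Sigma_{2A}\ll_{n} D_{K}^{\frac{1}{n}-\delta_{2,K}}q_{1}$, and the two cases for $\Sigma_{2B}$ into $|S|^{2}\ll \Sigma_{1}(\Sigma_{2A}+\Sigma_{2B})/H$ with $H\asymp D_{K}^{\frac{1}{n}-\delta_{2,K}}/q_{2}$, then balancing via $q_{2}^{2}=q_{1}$. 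The arithmetic and the tracking of side-conditions are correct.
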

Then we can finally bound the main sieve. From now on we set
\[
\begin{split}
    \mathcal{V}=\left\{v:\begin{matrix}V\leq v\leq 2V\\F_{\bfz}(X_{1},X_{2})\mod v\text{ is not a square in }\mathbb{F}_{v}\\v\nmid  N_{K}(\sigma(\alpha_{2})-\alpha_{2})\text{ for all }\sigma\text{ such that }\sigma(\alpha_{2})\neq\alpha_{2} \end{matrix}\right\},
\end{split}
\]
Moreover, we require
$D_{K}^{\frac{1}{2n}-\frac{\delta_{2,K}}{2}}\leq U\leq D_{K}^{\frac{1}{n}-\delta_{2,K}}$ and $U=V^{2}$, and $UV^{2}\leq D_{K}^{\frac{1}{n}}\leq (UV)^{2}$. We have
\[
S(\bfz;q_{1}q_{2})=\frac{1}{q_{1}q_{2}}\sum_{\substack{u_{1}\in\mathbb{Z}\\ (u_{1},q_{1})=1}}\hat{g}_{1}\left(\frac{u_{1}}{q_{1}q_{2}}\right)S(\bfz,u_{1};q_{1}q_{2})+\frac{1}{q_{1}q_{2}}\sum_{\substack{u_{1}\in\mathbb{Z}\\ (u_{1},q_{1})>1}}\hat{g}_{1}\left(\frac{u_{1}}{q_{1}q_{2}}\right)S(\bfz,u_{1};q_{1}q_{2})
\]
We start by dealing with the contribution of those $u_{1}$ such that $(u_{1},q_{1})>1$. An application of Lemma $\ref{lem : mainsieve}$ combined with our assumption that $UV^{2}\leq D_{K}^{\frac{1}{n}}$, gives
\[
\begin{split}
\frac{1}{q_{1}q_{2}}\sum_{\substack{u_{1}\in\mathbb{Z}\\ (u_{1},q_{1})>1}}\hat{g}_{1}\left(\frac{u_{1}}{q_{1}q_{2}}\right)S(\bfz,u_{1};q_{1}q_{2})&\ll_{n} \frac{D^{\frac{2}{n}-\delta_{2,K}}}{q_{2}^{\frac{3}{2}}}\sum_{\substack{u_{1}\in\mathbb{Z}\\ (u_{1},q_{1})>1}}\left|\hat{g}_{1}\left(\frac{u_{1}}{q_{1}q_{2}}\right)\right|\\& \ll\frac{D^{\frac{2}{n}-\delta_{2,K}}}{UV}
\end{split}
\]
 Let us turn our attention those $u_{1}$ such that $(u_{1},q_{1})=1$. Applying Lemma $\ref{lem : mainsieve}$, combined with our assumption that $(UV)^{2}\geq D_{K}^{\frac{1}{n}}$, gives
\[
\begin{split}
\frac{1}{q_{1}q_{2}}\sum_{\substack{u_{1}\in\mathbb{Z}\\ (u_{1},q_{1})=1}}\hat{g}_{1}\left(\frac{u_{1}}{q_{1}q_{2}}\right)S(\bfz,u_{1};q_{1}q_{2})&\ll_{n} D_{K}^{\frac{1}{2n}-\frac{\delta_{2,K}}{2}}U^{2}
\end{split}
\]
Combining the two bounds, we get
\[
S(\bfz;q_{1}q_{2})\ll_{n} D_{K}^{\frac{1}{2n}-\frac{\delta_{2,K}}{2}}U^{2}+\frac{D^{\frac{2}{n}-\delta_{2,K}}}{UV}
\]
Inserting this bound to the main sieve $(\ref{eq : mainsieve})$, we get
\begin{equation}
\frac{1}{A^{2}}\sum_{v,v'\in\mathcal{V}}\sum_{\substack{u,u'\mathcal{U}\\u\neq u'}}\left|\sum_{n\in\mathbb{Z}}\left(\frac{n}{uu'}\right)\left(\frac{n}{uu'}\right)\omega_{\bfz}(n)\right|\ll_{n}  D_{K}^{\frac{1}{2n}-\frac{\delta_{2,K}}{2}}U^{2} +\frac{D^{\frac{2}{n}-\delta_{2,K}}}{UV}.
\label{eq : boundmain}
\end{equation}
\subsubsection{Proof of Lemma $\ref{lem : deltasmall}$: the prime sieve}
We briefly discuss now how to handle the Prime sieve
\[
N=\frac{U}{A^{2}}\sum_{\substack{v,v'\mathcal{V}\\v\neq v'}}\left|\sum_{n\in\mathbb{Z}}\left(\frac{n}{vv'}\right)\omega_{\bfz}(n)\right|.
\]
Note that the inner sum in the right-hand side is the same as the one appearing in the second term of the right-hand side of $(\ref{eq : sqafter})$. Since we have
\[
\begin{split}
    \mathcal{V}=\left\{v:\begin{matrix}V\leq v\leq 2V\\F_{\bfz}(X_{1},X_{2})\mod v\text{ is not a square in }\mathbb{F}_{v}\\v\nmid  N_{K}(\sigma(\alpha_{2})-\alpha_{2})\text{ for all }\sigma\text{ such that }\sigma(\alpha_{2})\neq\alpha_{2} \end{matrix}\right\},
\end{split}
\]
arguing as in Lemma \ref{cor : sieve1}, one gets
\begin{equation}
\left|\sum_{n\in\mathbb{Z}}\left(\frac{n}{vv'}\right)\omega_{\bfz}(n)\right|\ll_{n} \frac{D_{K}^{\frac{2}{n}-\delta_{2,K}}}{vv'}\sum_{\substack{\bfu\in\mathbb{Z}^{2}}}\left(1+\frac{D_{K}^{\frac{1}{n}}u_{1}}{vv'}\right)^{-2}\cdot\left(1+\frac{D_{K}^{\frac{1}{n}-\delta_{2,K}}u_{2}}{vv'}\right)^{-2}.
\label{eq : primesieve}
\end{equation}
Since $v'v\leq 4V^{2}\ll D^{\frac{1}{n}-\delta_{2,K}}$, the inner sum in $(\ref{eq : primesieve})$ is a $O(1)$. Thus
\[
\left|\sum_{n\in\mathbb{Z}}\left(\frac{n}{vv'}\right)\omega_{\bfz}(n)\right|\ll_{n} \frac{D_{K}^{\frac{2}{n}-\delta_{2,K}}}{V^{2}}
\]
Hence,
\begin{equation}
\frac{U}{A^{2}}\sum_{\substack{v,v'\mathcal{V}\\v\neq v'}}\left|\sum_{n\in\mathbb{Z}}\left(\frac{n}{vv'}\right)\omega_{\bfz}(n)\right|\ll_{n} \frac{D_{K}^{\frac{2}{n}-\delta_{2,K}}}{UV^{2}},
\end{equation}
which is acceptable.
\subsubsection{Proof of Lemma \ref{lem : deltasmall}: conclusion}
Combining $(\ref{eq : sievecomp})$ with $(\ref{eq : boundmain})$, and $(\ref{eq : easypartcomp})$, we obtain 
\[
\sum_{k}\omega_{\bfz} (k^{2})\ll_{n} \frac{D_{K}^{\frac{2}{n}-\delta_{2,K}}}{UV} + D_{K}^{\frac{1}{2n}-\frac{\delta_{2,K}}{2}}U^{2}.
\]
Imposing $U=V^{2}$, the bound above is optimal as soon as
\[
\frac{D_{K}^{\frac{2}{n}-\delta_{2,K}}}{V^{3}} = D_{K}^{\frac{1}{2n}-\frac{\delta_{2,K}}{2}}V^{4}
\]
 which implies
 \[
 V=D_{K}^{\frac{1}{7n}+\frac{1}{14}\left(\frac{1}{n}-\delta_{2,K}\right)}
 \]
 and concludes the proof of Lemma $\ref{lem : deltasmall}$.
\subsubsection{Proof of Theorem $\ref{thm : deltasmall}$}
We can now prove Theorem $\ref{thm : deltasmall}$. Again, thanks to Lemma $\ref{lem : index2}$, we may assume that $K$ does not contain a subfield of index $2$. We recall that 
\[\mathcal{C}=\left[-D_{K}^{\frac{1}{n}-\delta_{3,K}},D_{K}^{\frac{1}{n}-\delta_{3,K}}\right]\times\cdots\times\left[-D_{K}^{\frac{1}{n}-\delta_{n,K}},D_{K}^{\frac{1}{n}-\delta_{n,K}}\right],
\]
and that
\[
\begin{split}
   & Z_{\square}=\{\bfz\in\mathbb{Z}^{n-2}: F_{\bfz}(X_{1},X_{2})\text{ is a square}\};\\&  Z_{F}=\{\bfz\in\mathbb{Z}^{n-2}: F_{\bfz}(X_{1},X_{2})\text{ does not satisfy the hypothesis of Proposition $\ref{prop : Anumber}$}\}
\end{split}
\]
We start by writing
\begin{equation}
\begin{split}
h_{2}(K)&\ll\# \left\{(y,z_{1},...,z_{n})\in\mathbb{Z}^{n+1}:\begin{matrix}|z_{i}|\ll D_{K}^{\frac{1}{n}-\delta_{i,K}},\quad\text{for }i=1,...,n\\y^{2}=N_{K}(z_{1}+z_{2}\alpha_{2,K}+\cdots+z_{n}\alpha_{n,K})\end{matrix}     \right\}\\&=\sum_{\bfz\in\mathcal{C}\cap\mathbb{Z}^{n-2}}\sum_{\substack{(x_{1},x_{2})\in\mathcal{B}\cap\mathbb{Z}^{2}\\F_{\bfz}(x_{1},x_{2})=\square}}1\\&= \sum_{\substack{\bfz\in\mathcal{C}\cap Z_{\square}}}\sum_{\substack{(x_{1},x_{2})\in\mathcal{B}\cap\mathbb{Z}^{2}\\F_{\bfz}(x_{1},x_{2})=\square}}1+\sum_{\substack{\bfz\in\mathcal{C}\\ \bfz\in Z_{F}\setminus Z_{\square}}}\sum_{\substack{(x_{1},x_{2})\in\mathcal{B}\cap\mathbb{Z}^{2}\\F_{\bfz}(x_{1},x_{2})=\square}}1+\sum_{\substack{\bfz\in\mathcal{C}\cap\mathbb{Z}^{n-2}\\ \bfz\not\in Z_{\square}\cup Z_{F}}}\sum_{\substack{(x_{1},x_{2})\in\mathcal{B}\cap\mathbb{Z}^{2}\\F_{\bfz}(x_{1},x_{2})=\square}}1.
\end{split}
\label{eq : tog1}
\end{equation}
For the first two terms we have the bounds
\[
 \sum_{\substack{\bfz\in\mathcal{C}\cap Z_{\square}}}\sum_{\substack{(x_{1},x_{2})\in\mathcal{B}\cap\mathbb{Z}^{2}\\F_{\bfz}(x_{1},x_{2})=\square}}1=\begin{cases}
    0       &\text{if $n=5,6$}\\
    O_{n,\varepsilon}\left(D_{K}^{\frac{1}{4}+\frac{1}{n}}\right) &\text{if $n>6$}
\end{cases},\qquad\sum_{\substack{\bfz\in\mathcal{C}\\ \bfz\in Z_{F}\setminus Z_{\square}}}\sum_{\substack{(x_{1},x_{2})\in\mathcal{B}\cap\mathbb{Z}^{2}\\F_{\bfz}(x_{1},x_{2})=\square}}1\ll_{n,\varepsilon} D_{K}^{\frac{1}{2}-\frac{1}{2n}-\frac{1}{7n}},
\]
thanks to Corollary $\ref{cor : square}$ and Corollary $\ref{cor : Ffail}$. For the third term in $(\ref{eq : tog1})$, we can use Lemma $\ref{lem : deltasmall}$, getting
\[
\begin{split}
\sum_{\substack{\bfz\in\mathcal{C}\cap\mathbb{Z}^{n-2}\\ \bfz\not\in Z_{\square}\cup Z_{F}}}\sum_{\substack{(x_{1},x_{2})\in\mathcal{B}\cap\mathbb{Z}^{2}\\F_{\bfz}(x_{1},x_{2})=\square}}1&\ll_{n,\varepsilon} \#(\mathcal{C}\cap\mathbb{Z}^{n-2})\cdot D_{K}^{\frac{2}{n}-\delta_{2,K}-\frac{1}{2n}-\left(\frac{1}{7n}+\frac{3\delta_{2,K}}{14}\right)+\varepsilon}\\&\ll D_{K}^{\sum_{j=3}^{n}\left(\frac{1}{n}-\delta_{j,K}\right)+\frac{2}{n}-\delta_{2,K}-\frac{1}{2n}-\left(\frac{1}{7n}+\frac{3\delta_{2,K}}{14}\right)+\varepsilon}\\&=  D_{K}^{\frac{1}{2}-\frac{1}{2n}-\left(\frac{1}{7n}-\frac{3\delta_{2,K}}{14}\right)+\varepsilon}.
\end{split}
\]
which completes the proof of Theorem $\ref{thm : deltasmall}$.
\section{The Relevant exponential sums.}
This section is dedicated to prove the key exponential sums used in proof of Lemma $\ref{lem : exp}$ and Proposition $\ref{prop : Anumber}$.
\subsection{Notions of the formalism of $\ell$-adic trace functions}
We first recall some notions of the formalism of $\ell$-adic trace functions. We are assuming the definition of a constructible $\ell$-adic sheaf, for which, as well as for a general introduction on the subject, we refer the reader to \cite[Section $1.1$]{Del80} and \cite[Section $3.4.2$]{Kat80}. We start by introducing the notion of a trace function attached to a constructible $\ell$-adic sheaf as in \cite[$7.3.7$]{Kat90}. \newline In the following $p,\ell >2$ and $q=p^{k}$ for some $k\geq 1$ are distinct prime numbers and $\iota:\overline{\mathbb{Q}}_{\ell}\hookrightarrow \mathbb{C}$ is a fixed embedding. Let $\mathcal{F}$ be a constructible $\ell$-adic sheaf on $\overline{\mathbb{A}}_{\mathbb{F}_{q}}^{1}$. For any $x\in\overline{\mathbb{A}}_{\mathbb{F}_{q}}^{1}(\mathbb{F}_{q^{n}})$ one defines
\[
t_{\mathcal{F},n}(x):=\iota(\Tr(\Fr_{q^{n}}|\mathcal{F}_{\overline{x}})),
\]
where $\Fr_{q^{n}}$ is the geometric Frobenius automorphism of $\mathbb{F}_{q^{n}}$ and $\mathcal{F}_{\overline{x}}$ is the stalk of $\mathcal{F}$ at a geometric point $\overline{x}$ over $x$. The function $t_{\mathcal{F},n}$ is called \textit{the trace function attached to $\mathcal{F}$ over $\mathbb{F}_{q^{n}}^{1}$}. If there is not ambiguity, we denote by $t_{\mathcal{F}}$ the trace function $t_{\mathcal{F},1}$. 
\begin{defin}
Let $\mathcal{F}$ be a constructible $\ell$-adic sheaf on $\overline{\mathbb{A}}_{\mathbb{F}_{q}}^{1}$ and $j:U\hookrightarrow\overline{\mathbb{A}}_{\mathbb{F}_{q}}^{1}$ the largest dense open subset of $\overline{\mathbb{A}}_{\mathbb{F}_{q}}^{1}$ where $\mathcal{F}$ is lisse.
\begin{itemize}
\item[$i)$] The sheaf $\mathcal{F}$ is said to be a \textit{middle-extension} $\ell$-adic sheaf if $\mathcal{F}=j_{*}j^{*}\mathcal{F}$ (see \cite[$4.4$, $4.5$]{Kat80} for the definition of $j_{*}j^{*}\mathcal{F}$ and its basic properties).
\item[$ii)$] The sheaf $\mathcal{F}$ is said to be a \textit{middle-extension $\ell$-adic sheaf, punctually pure of weight $0$} if it is a middle-extension sheaf and if for every $n\geq 1$ and every $x\in U(\mathbb{F}_{q^{n}})$, the images of the eigenvalues of $(\Fr_{q^{n}}|\mathcal{F}_{\overline{x}})$ via the fixed embedding $\iota:\overline{\mathbb{Q}}_{\ell}\hookrightarrow \mathbb{C}$, are complex numbers of modulus $1$.
\end{itemize}
\end{defin}

\begin{rem} 
 By \cite[Definition $1.12$]{FKM15}, we will refer to trace functions attached to punctually pure of weight $0$ middle-extension $\ell$-adic sheaves on $\overline{\mathbb{A}}_{\mathbb{F}_{q}^{1}}$ as \textit{trace functions}.
\end{rem}

\begin{defin}[\cite{FKM19}, pp. $4-6$] Let $\mathcal{F}$ be a constructible $\ell$-adic sheaf on $\overline{\mathbb{A}}_{\mathbb{F}_{q}^{1}}^{1}$ and $j:U\hookrightarrow\overline{\mathbb{A}}_{\mathbb{F}_{q}^{1}}^{1}$ the largest dense open subset of $\overline{\mathbb{A}}_{\mathbb{F}_{q}^{1}}^{1}$ where $\mathcal{F}$ is lisse. The \textit{conductor of $\mathcal{F}$} is defined as
\[
c(\mathcal{F}):=\rank(\mathcal{F})+|\sing(\mathcal{F})|+\sum_{x\in \overline{{\mathbb{P}}}_{\mathbb{F}_{q}^{1}}^{1}(\overline{\mathbb{F}}_{q})}\text{Swan}_{x}(j_{*}j^{*}\mathcal{F})+\dim H_{c}^{0}(\overline{\mathbb{A}}_{\mathbb{F}_{q}}^1,\mathcal{F}),
\]
where
\begin{enumerate}
\item[$i)$] $\rank(\mathcal{F}):=\dim \mathcal{F}_{x}$, for any $x$ where $\mathcal{F}$ is lisse.
\item[$ii)$] $\sing(\mathcal{F}):=\{x\in\overline{\mathbb{P}}_{\mathbb{F}_{q}}^{1}(\overline{\mathbb{F}}_{q}):\mathcal{F} \text{ is not lisse at }x\}$.
\item[$iii)$] For any $x\in\overline{\mathbb{P}}_{\mathbb{F}_{q}^{1}}^{1}(\overline{\mathbb{F}}_{q})$, $\Swan_{x}(j_{*}j^{*}\mathcal{F})$ is the \textit{Swan conductor of $\mathcal{F}$ at $x$} (see \cite[Chapter $1$]{Kat88} for the definition of the Swan conductor).
\end{enumerate}
\label{defn : cond}
\end{defin}

\begin{rem}
Recall that if $\mathcal{F}$ is a middle-extension sheaf then
\[
c(\mathcal{F})=\rank(\mathcal{F})+|\sing(\mathcal{F})|+\sum_{x\in \overline{{\mathbb{P}}}_{\mathbb{F}_{q}}^{1}(\overline{\mathbb{F}}_{q})}\text{Swan}_{x}(\mathcal{F}),
\]
since in this case $\mathcal{F}\cong j_{*}j^{*}\mathcal{F}$ and $\dim H_{c}^{0}(\overline{\mathbb{A}}_{\mathbb{F}_{q}}^{1},\mathcal{F})=0$.
\end{rem}
Finally, we recall the following construction:
\begin{esem}
let $F[X,Y]\in \mathbb{F}_{q}[X,Y]$ be a square-free polynomial of degree $n$. Then for any non trivial multiplicative character over $\mathbb{F}_{q}^{\times}$, $\chi$, one can consider the following $t:\mathbb{F}_{q}\rightarrow \mathbb{C}$
\[
x \mapsto \sum_{x\in\mathbb{F}_{q}}\chi(F(x,y)).
\]
It turns out that we can see this function as a trace function: combining \cite[Definition $5.9$, Theorem $5.10$]{FKM19} and arguing like in \cite[Corollary $5.10$]{FKM19} one shows that there exists a middle extension $\ell$-adic sheaf $\mathcal{H}_{F,\chi}$ pure of weight $1$ such that 
\[
t_{\mathcal{H}_{F,\chi},k}(x)=\sum_{y\in\mathbb{F}_{q^{k}}}\chi_{k}(F(x,y))+O(1)\quad\text{ for all } x\in\mathbb{F}_{q^{k}}\setminus (\sing(\mathcal{H}_{F,\chi})\cap \mathbb{F}_{q^{k}}),
\]
where $\chi_{k}=\chi\circ\text{Norm}_{\mathbb{F}_{q^{k}}/\mathbb{F}_{q}}$. Moreover, one has that
\begin{itemize}
\item[$i)$] $\sing(\mathcal{H}_{F,\chi})\cap\overline{\mathbb{A}}_{\mathbb{F}_{q}}^{1}(\overline{\mathbb{F}}_{q})=\{y\in\overline{\mathbb{A}}_{\mathbb{F}_{q}}^{1}(\overline{\mathbb{F}}_{q}): F(X,y)\text{ is not square-free}\}$
\item[$ii)$] $\rank (\mathcal{H}_{F,\chi})=\dim( H^{1}(\overline{\mathbb{A}}_{\mathbb{F}_{q}}^{1},\mathcal{L}_{\chi(F(x,Y))}))$ for any $x$ where $\mathcal{H}$ is lisse.
\item[$iii)$] $C(\mathcal{H}_{F,\chi})\ll_{n} 1$
\end{itemize}
\label{es : 2vartrace}
\end{esem}
\subsubsection{A $1$-dimensional exponential sum}
We conclude this section by recording the following Lemma about one dimensional exponential sums
\begin{lem}
Let $F[X,Y]\in\mathbb{F}_{q}[X,Y]$ be a square free polynomial. Then 
\[
\left|\sum_{x\in\mathbb{F}_{p}}\left(\frac{F(x,y)}{p}\right)e_{p}(sx)\right|\ll_{\deg F}p^{\frac{1}{2}},
\]
for all but $O_{\deg F}(1)$ values of $y$.
\label{lem : exp1dim}
\end{lem}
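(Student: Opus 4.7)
The plan is to apply the classical Weil bound for one-dimensional complete character sums with an additive twist. For fixed $y \in \mathbb{F}_p$, write $F_y(X) := F(X,y) \in \mathbb{F}_p[X]$, so that the sum in question becomes
\[
\sum_{x \in \mathbb{F}_p}\left(\tfrac{F_y(x)}{p}\right)e_p(sx).
\]
By Weil's theorem for mixed character sums in one variable (or equivalently, by the trace-function formalism reviewed in Example \ref{es : 2vartrace}, applied to the conductor-bounded sheaf on $\mathbb{A}^1_X$ obtained by tensoring a Kummer sheaf with an Artin--Schreier sheaf), the displayed sum is $O_{\deg F}(p^{1/2})$ as soon as $F_y(X)$ has $X$-degree $\geq 1$ and is square-free in $\overline{\mathbb{F}}_p[X]$ (equivalently, not of the form $c \cdot H(X)^2$ for some $c \in \overline{\mathbb{F}}_p^{\times}$ and $H \in \overline{\mathbb{F}}_p[X]$).

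The remaining task is to show that this hypothesis fails for at most $O_{\deg F}(1)$ values of $y \in \mathbb{F}_p$. Let $L(Y) \in \mathbb{F}_p[Y]$ be the leading coefficient of $F(X,Y)$ viewed as a polynomial in $X$, and let $\Delta(Y) := \mathrm{disc}_X F(X,Y) \in \mathbb{F}_p[Y]$. Both have $\mathbb{F}_p$-degree bounded in terms of $\deg F$, with $\deg \Delta \leq 2(\deg F)^2$. Assuming $\Delta \not\equiv 0$, any $y \in \mathbb{F}_p$ satisfying $L(y)\Delta(y) \neq 0$ yields $F_y$ separable of degree $\deg_X F \geq 1$, hence square-free. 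The exceptional set is then contained in the zero locus of $L\Delta$ and has cardinality $O_{\deg F}(1)$.

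The main (and only) subtlety is the possibility that $\Delta(Y) \equiv 0$, which in characteristic $p$ occurs precisely when $F(X, Y) = G(X^p, Y)$ for some $G \in \mathbb{F}_p[X, Y]$. Since $x^p = x$ on $\mathbb{F}_p$, replacing $F$ by $G$ leaves the sum unchanged, and square-freeness of $F$ forces square-freeness of $G$ (a square factor of $G$ would pull back to an $X^p$-square factor of $F$, hence to a square factor in $\mathbb{F}_p[X,Y]$). Iterating this reduction finitely many times, I arrive at a polynomial $\tilde F$ which is either separable in $X$ over $\mathbb{F}_p(Y)$---in which case the discriminant argument applies---or has no $X$-dependence at all; the latter degenerate case does not arise in the intended applications, where $F_{\bfz}$ has $X$-degree $n \geq 5$. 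This completes the plan.
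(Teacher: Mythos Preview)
Your argument follows the same route as the paper's: invoke the one-variable Weil bound and control the exceptional $y$ via the $X$-discriminant $\Delta(Y)$. The paper separates the cases $s\neq 0$ (where Weil's bound in fact holds for \emph{every} $y$, since if $F_y$ happens to be a perfect square the sum collapses to $O_{\deg F}(1)$) and $s=0$ (where one genuinely needs $F_y$ not to be a square); you instead treat both cases uniformly by demanding $F_y$ square-free, which is a slightly stronger hypothesis but leads to the same conclusion.

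One small correction to your handling of the corner case: your characterization of $\Delta(Y)\equiv 0$ is not accurate. For square-free $F$, the $X$-discriminant vanishes identically as soon as some \emph{irreducible factor} of $F$ over $\mathbb{F}_p(Y)$ lies in $\mathbb{F}_p(Y)[X^p]$, not only when $F$ itself does (e.g.\ $F=X(X^p-Y)$ is square-free with $\Delta\equiv 0$ but is not a polynomial in $X^p$). Your substitution $X^p\mapsto X$ does not directly apply in that mixed situation. This is harmless for the paper's applications, since there $F=F_{\bfz}$ splits into linear factors over $\overline{\mathbb{F}}_p$ and is therefore separable in $X$ once $p>n$; but the correct justification is the separability of $F_{\bfz}$, not merely that $\deg_X F_{\bfz}=n\geq 5$.
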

\begin{proof}
    If $s\neq 0$, then the result follows from \cite{We48}. If $s=0$, then from \cite[Theorem $11.23$]{IK04}, we have that 
    \[
\left|\sum_{x\in\mathbb{F}_{p}}\left(\frac{F(x,y)}{p}\right)\right|\ll_{\deg F}p^{\frac{1}{2}},
\]
for every $y$ such that $F(X,y)\in\mathbb{F}_{p}[X]$ is not a square. it rests to show that $F(X,y)$ can be a square only for $O_{\deg F}(1)$ values of $y$: since we are assuming $F(X,Y)\in \mathbb{F}_{q}(Y)[X]$ is square free it follows that $\text{disc}(F)\in\mathbb{F}[Y]\neq 0$. Moreover,
    \[
    F(X,y) \text{ is square-free}\Longleftrightarrow \text{disc}(F)(y)\neq 0.
    \]
    Thus $F(X,y)$ is square-free for but $O_{\deg F}(1)$ values of $y$.
\end{proof}
\subsection{Hooley's Theorem for exponential sums.}
Both the proofs of Proposition $\ref{prop : Anumber}$ and Lemma $\ref{lem : exp}$ rely on a general procedure due to Hooley
\cite{Ho82} (see also (\cite[Theorem A.1]{BB23})), which allows one to estimate a very general family of exponential sums over a finite field, provided that one can control the seconds moment of an appropriate counting function. 
\begin{thm}[Hooley]
Let $F, G_{1},\dots,G_{k}\in \mathbb{Z}[X_1,\dots,X_m]$ be polynomials of degree at most $d$ and let
\[
S=\sum_{\substack{x\in\mathbb{F}_{p}^{m}\\G_{1}(\mathbf{x})=\cdots = G_{k}(\mathbf{x})=0}}e_{p}(F(\mathbf{x})).
\]
For each $r\geq 1$ and $\tau\in\mathbb{F}_{p^{r}}$, write
\begin{equation}\label{eq:Nr}
N_{r}(\tau)=\#\left\{\mathbf{x}\in\mathbb{F}_{p^{r}}^{m}: G_{1}(\mathbf{x})=\dots = G_{k}(\mathbf{x})=0, F(\mathbf{x})=\tau\right\}. 
\end{equation}
If there exist $N_{r}\in\mathbb{R}$ such that
\[
\sum_{\tau\in\mathbb{F}_{p^{r}}}|N_{r}(\tau)-N_{r}|^{2}\ll_{d,k,m} p^{\kappa r},
\]
where $\kappa\in\mathbb{Z}$ is independent of $r$, then $S\ll_{d,k,m} p^{\kappa/2}$.
\label{thm : hooley}
\end{thm}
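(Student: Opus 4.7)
My plan follows Hooley's classical argument: combine a soft Fourier/Cauchy-Schwarz reduction, applied uniformly over all extensions $\mathbb{F}_{p^r}$, with the Grothendieck-Lefschetz trace formula and Deligne's Weil II, in order to upgrade a second-moment estimate into a pointwise bound on $S$.

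The first step is to apply Fourier inversion and Cauchy-Schwarz. Using $\sum_{\tau \in \mathbb{F}_p} e_p(\tau) = 0$, I would write $S = \sum_{\tau}(N_1(\tau) - N_1)e_p(\tau)$, and Cauchy-Schwarz yields
\[
|S|^2 \leq p \sum_{\tau}|N_1(\tau) - N_1|^2 \ll_{d,k,m} p^{\kappa+1},
\]
i.e.\ $|S| \ll p^{(\kappa+1)/2}$, which is half a power of $p$ short of the target. The same reasoning applied over each $\mathbb{F}_{p^r}$ produces $|S_r| \ll_{d,k,m} p^{(\kappa+1)r/2}$ uniformly in $r$, where $S_r = \sum_{\mathbf{x} \in V(\mathbb{F}_{p^r})} \psi_r(F(\mathbf{x}))$, $V = \{G_1 = \cdots = G_k = 0\}$, and $\psi_r = e_p \circ \mathrm{Tr}_{\mathbb{F}_{p^r}/\mathbb{F}_p}$. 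By the Grothendieck-Lefschetz trace formula applied to the Artin-Schreier sheaf $\mathcal{L}_\psi(F)$ on $V$, there is a decomposition
\[
S_r = \sum_{i=1}^N \epsilon_i \alpha_i^r \qquad (r \geq 1),
\]
where $\epsilon_i \in \{\pm 1\}$, the number $N$ of Frobenius eigenvalues is bounded purely in terms of $d,k,m$ (via standard bounds on total Betti numbers of affine varieties, as in Bombieri-Adolphson-Sperber), and $|\alpha_i| = p^{w_i/2}$ for integers $w_i \geq 0$ by Deligne's Weil II. A crude limit as $r \to \infty$ combined with the uniform bound just obtained forces $\max_i w_i \leq \kappa + 1$.

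The main obstacle is to sharpen $\kappa + 1$ to $\kappa$. I would achieve this by exploiting the variance hypothesis in its Parseval-dual form
\[
\sum_{a \in \mathbb{F}_{p^r}^\times}|S_r(a)|^2 = p^r \sum_{\tau}|N_r(\tau) - N_r|^2 \ll p^{(\kappa+1)r},
\]
where $S_r(a) = \sum_{\mathbf{x}} \psi_r(a F(\mathbf{x}))$. This says that the average over $a \in \mathbb{F}_{p^r}^\times$ of $|S_r(a)|^2$ is already of the correct size $\ll p^{\kappa r}$. The key point in promoting this $L^2$-in-$a$ bound to a pointwise bound on $S = S_1(1)$ is the sheaf-theoretic uniformity of the family $\{S_r(a)\}_a$: the Frobenius eigenvalues $\alpha_i(a)$ arise as stalks of a single sheaf on $\mathbb{A}^1$ (lisse on a dense open) and hence share a common weight structure, so the average bound pins the individual weights, forcing $\max_i w_i \leq \kappa$. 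Combined with the decomposition of $S_1$, this gives $|S| \leq N \cdot p^{\kappa/2} \ll_{d,k,m} p^{\kappa/2}$. It is precisely in this final step that the uniformity of the variance hypothesis in $r$ (rather than only $r=1$) is decisively used, and this is by far the most delicate part of the argument.
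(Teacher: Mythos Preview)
The paper does not prove this theorem: it is quoted as a black box from Hooley \cite{Ho82} (see also \cite[Theorem~A.1]{BB23}), so there is no ``paper's own proof'' to compare against. I can therefore only assess your argument on its own terms.

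Your architecture is the right one. The Cauchy--Schwarz step, the Grothendieck--Lefschetz expansion $S_r=\sum_i\epsilon_i\alpha_i^r$ with $N\ll_{d,k,m}1$, the integrality of the weights $w_i$, and the Parseval identity
\[
\sum_{a\in\mathbb{F}_{p^r}^\times}|S_r(a)|^2 \;\leq\; p^r\sum_{\tau}|N_r(\tau)-N_r|^2 \;\ll_{d,k,m}\; p^{(\kappa+1)r}
\]
are all correct and are exactly the ingredients used in the standard proof.

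The gap is in your final paragraph. You assert that ``sheaf-theoretic uniformity'' of the family $\{S_r(a)\}_a$ lets the average bound ``pin the individual weights,'' but you do not carry this out, and the step is not automatic. Knowing that the weights $w_i(a)$ are independent of $a\in\mathbb{F}_p^\times$ is not by itself enough: a single weight-$(\kappa+1)$ eigenvalue contributing $p^{(\kappa+1)r}$ to $\sum_a|S_r(a)|^2$ is perfectly consistent with the upper bound $\ll p^{(\kappa+1)r}$. What is actually needed is a matching \emph{lower} bound: if $w$ denotes the largest weight occurring in the virtual lisse sheaf $\sum_i(-1)^iR^i\pi_!\bigl(\mathcal{L}_\psi(aF(\mathbf{x}))\bigr)$ on $\mathbb{G}_m$ (lisse because the fibers are geometrically isomorphic), then Deligne's purity together with the non-vanishing of $H^2_c(\mathbb{G}_m,\mathcal{F}\otimes\mathcal{F}^\vee)$ for the top-weight constituent $\mathcal{F}$ gives
\[
\sum_{a\in\mathbb{F}_{p^r}^\times}|S_r(a)|^2 \;\geq\; c\,p^{(w+1)r}\;-\;O\bigl(p^{(w+\tfrac12)r}\bigr)
\]
for some $c\geq 1$. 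Comparing this with the upper bound and letting $r\to\infty$ forces $w\leq\kappa$, whence $|S_1|\leq N\,p^{\kappa/2}\ll_{d,k,m}p^{\kappa/2}$. You have correctly located where the difficulty lies and named the right mechanism, but a complete proof must make this lower-bound step explicit; as written, the most delicate part of the argument is missing.
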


\subsection{Proof of Lemma $\ref{lem : exp}$.}
We are now ready to prove Lemma $\ref{lem : exp}$ which is a direct consequence of:
\begin{lem}
Let $F[X,Y]\in\mathbb{F}_{q}[X,Y]$ be a square-free polynomial such that 
\[
F[X,Y]=\prod_{i=1}^{n}(X+a_{i}Y+b_{i})\text{ over }\overline{\mathbb{F}}_{q}.
\]
Moreover assume that $a_{i}\neq a_{1}$ for some $i\in\{2,...,n\}$. Then for any $s,t\in\mathbb{F}_{p}$, we have
\[
\left|\sum_{x,y\in\mathbb{F}_{p}}\left(\frac{F(x,y)}{p}\right)e_{p}(sx+ty)\right|\ll_{n}p(p,r,s)^{\frac{1}{2}}.
\]
\label{lem : expnotshift}
\end{lem}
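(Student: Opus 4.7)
The plan is to reduce the character sum to a point count on the auxiliary surface $z^{2}=F(x,y)$ and then apply Hooley's Theorem (Theorem~\ref{thm : hooley}). Using $\#\{z\in\mathbb{F}_{p}:z^{2}=a\}=1+\left(\tfrac{a}{p}\right)$, one rewrites
\[
\sum_{x,y\in\mathbb{F}_{p}}\left(\tfrac{F(x,y)}{p}\right)e_{p}(sx+ty)=\sum_{\substack{(x,y,z)\in\mathbb{F}_{p}^{3}\\ z^{2}=F(x,y)}}e_{p}(sx+ty)\;-\;\sum_{x,y\in\mathbb{F}_{p}}e_{p}(sx+ty).
\]
The last sum vanishes unless $s\equiv t\equiv 0\pmod p$, in which case the desired bound follows at once from Lemma~\ref{lem : exp1dim} applied to the inner $x$-sum: the generic $y$ contributes $O_{n}(p^{1/2})$ while the $O_{n}(1)$ exceptional $y$'s are handled trivially, giving $\ll_{n}p\cdot p^{1/2}$, which matches $p(p,s,t)^{1/2}$ in that range. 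So from now on assume $(s,t)\not\equiv(0,0)\pmod p$.

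For the main term, I apply Theorem~\ref{thm : hooley} with $m=3$, $k=1$, $G_{1}(x,y,z)=z^{2}-F(x,y)$, and linear phase $sx+ty$. WLOG $s\not\equiv 0\pmod p$ (the other case is symmetric). Parametrizing the slice $\{sx+ty=\tau\}$ by $y$ via $x=s^{-1}(\tau-ty)$ and counting the $z$'s above each $(x,y)$,
\[
N_{r}(\tau)=p^{r}+\sum_{y\in\mathbb{F}_{p^{r}}}\chi_{r}\bigl(F_{\tau}(y)\bigr),\qquad F_{\tau}(y):=\prod_{i=1}^{n}\bigl((a_{i}-t/s)\,y+(\tau/s+b_{i})\bigr),
\]
where $\chi_{r}$ is the quadratic character of $\mathbb{F}_{p^{r}}$. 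Weil's bound gives $|N_{r}(\tau)-p^{r}|\ll_{n}p^{r/2}$ whenever $F_{\tau}$ is not a perfect square in $\overline{\mathbb{F}}_{p}[y]$.

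The only step requiring a genuine argument is that $F_{\tau}$ is a square for at most $O_{n}(1)$ values of $\tau$. The hypothesis $a_{i}\neq a_{1}$ for some $i$ guarantees that not all leading coefficients $a_{i}-t/s$ can vanish simultaneously, so $F_{\tau}$ has positive degree. Squareness of $F_{\tau}$ then forces the nontrivial roots $-(\tau/s+b_{i})/(a_{i}-t/s)$ to be matched in pairs; for a pair $(i,j)$ with $a_{i}\neq a_{j}$ the coincidence relation is linear and nontrivial in $\tau$, so it admits at most one solution, and for a pair with $a_{i}=a_{j}$ the square-freeness of $F$ forces $b_{i}\neq b_{j}$, hence distinct roots. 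Since the number of possible pairings is $O_{n}(1)$, so is the exceptional set. Bounding $N_{r}(\tau)-p^{r}$ trivially on it yields
\[
\sum_{\tau\in\mathbb{F}_{p^{r}}}|N_{r}(\tau)-p^{r}|^{2}\ll_{n}p^{2r},
\]
i.e.\ $\kappa=2$ in Hooley's notation; Theorem~\ref{thm : hooley} then gives $\sum_{z^{2}=F(x,y)}e_{p}(sx+ty)\ll_{n}p$, which together with the trivial error term $O_{n}(p)$ completes the proof. The conceptual obstacle is pinpointing which degeneracy the hypothesis $a_{1}\neq a_{i}$ rules out; once isolated, the squareness analysis reduces to elementary linear algebra in $\tau$.
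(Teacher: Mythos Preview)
Your proof is correct and follows essentially the same strategy as the paper: both reduce to a point count on the surface $w^{2}=F(x,y)$, invoke Hooley's Theorem~\ref{thm : hooley} after slicing along the linear phase $sx+ty$, and verify that the restricted one-variable polynomial fails to be a perfect square for all but $O_{n}(1)$ values of the slice parameter, so that Weil/Lang--Weil gives $N_{r}(\tau)=p^{r}+O_{n}(p^{r/2})$ generically. The only cosmetic differences are that the paper eliminates $y$ rather than $x$ (so its restricted polynomial is $F(X,s'X+k')$ with $s'=-\overline{t}s$), and that you are slightly more careful in separating off the complete exponential sum $\sum_{x,y}e_{p}(sx+ty)$, which the paper silently absorbs since it vanishes for $(s,t)\neq(0,0)$.
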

\begin{proof}
We start by dealing with the case when $r=s=0$: we have
\[
\sum_{x,y\in\mathbb{F}_{p}}\left(\frac{F(x,y)}{p}\right)=\sum_{y\in\mathbb{F}_{p}}\sum_{x\in\mathbb{F}_{p}}\left(\frac{F(x,y)}{p}\right).
\]
Thanks to Lemma $\ref{lem : exp1dim}$, we get
\begin{equation}
\left|\sum_{x\in\mathbb{F}_{p}}\left(\frac{F(x,y)}{p}\right)\right|\ll_{n}\sqrt{p},
\label{eq : boundcharsum}
\end{equation}
for all but $O_{n}(1)$ values of $y$. Let us denote by $S$ the set of those $y$ for which the bound $(\ref{eq : boundcharsum})$ does not hold. Hence,
\[
\begin{split}
\sum_{y\in\mathbb{F}_{p}}\sum_{x\in\mathbb{F}_{p}}\left(\frac{F(x,y)}{p}\right)&=\sum_{\substack{y\in\mathbb{F}_{p}\setminus S}}\sum_{x\in\mathbb{F}_{p}}\left(\frac{F(x,y)}{p}\right)+\sum_{\substack{y\in S}}\sum_{x\in\mathbb{F}_{p}}\left(\frac{F(x,y)}{p}\right)\\&\ll_{n}  p^{\frac{3}{2}}.
\end{split}
\]
Now we deal with the case $(s,t)\neq (0,0)$. We may assume $s\neq 0$ (the case $t\neq 0$ is similar). We start by observing that
\[
\sum_{x,y\in\mathbb{F}_{p}}\left(\frac{F(x,y)}{p}\right)e_{p}(sx+ty)=\sum_{\substack{w,x,y\in\mathbb{F}_{p}\\w^{2}=F(x,y)}}e_{p}(sx+ty).
\]
With the goal of using Theorem $\ref{thm : hooley}$ , we are going to compute 
\[
\begin{split}
N_{m}(k)&=\#\{(w,x,y)\in\mathbb{F}_{p^{m}}^{3}: w^{2}=F(x,y),\text{ }sx+ty=k\}|\\&=\#\{(w,x)\in\mathbb{F}_{p^{m}}^{3}: w^{2}=F(x,s'x+k')\},
\end{split}
\]
for every $m\geq 1$, where $s'=-\overline{t}s$, and $k'=\overline{t}k$. We claim that $F(X,s'X+k')$ is square-free for all but $O_{n}(1)$ values of $k'$. With this in hand, let us see how we can finish the Lemma: if the polynomial $F(X,s'X+k')$ is square-free, then the polynomial $W^{2}=F(X,s'X+k')$ is irreducible. Provided that $F(X,s'X+k')$ is not a square, it follows from the Lang-Weil bound that
\[
\#\{(w,x)\in\mathbb{F}_{p^{m}}^{3}: w^{2}=F(x,s'x+k')\}=p^{m}+O_{n}(p^{\frac{m}{2}}).
\]
Thus,
\[
\sum_{k\in\mathbb{F}_{p^{m}}}|N_{m}(k)-p^{m}|^{2}\ll_{n} p^{2m},
\]
which leads to 
\[
\left|\sum_{x,y\in\mathbb{F}_{p}}\left(\frac{F(x,y)}{p}\right)e_{p}(sx+ty)\right|\ll_{n}p,
\]
thanks to Theorem $\ref{thm : hooley}$. It rests to prove that $F(X,s'X+k')$ is square-free for all but $O_{n}(1)$ values of $k'$. First  observe that
\[ 
F(X,s'X+k')=A_{s',k'}G_{s',k'}(X),
\]
where 
\[
A_{s',k'}=\prod_{i:a_{i}s'=-1}(b_{i}+a_{i}k'),\quad G_{s',k'}(X)=\prod_{\substack{i=1\\a_{i}s'\neq -1}}^{n}((1+a_{i}s')X+a_{i}k'+b_{i}).
\]
Since we are assuming that there exists $i$ such that $a_{i}\neq a_{1}$, then $G_{s',k'}(X)$ is a non-constant polynomial. Moreover, $F(X,s'X+k')$ is square-free if and only if $G_{s',k'}(X)$ is. On the other hand, $G_{s',k'}(X)$ is not square-free if and only if two distinct factors
\[
(1+a_{i}s')X+a_{i}k'+b_{i},\quad (1+a_{j}s')X+a_{j}k'+b_{j}
\]
are proportional. These two linear equations are proportional if and only if
\[
\frac{a_{i}k'+b_{i}}{(1+a_{i}s')}=\frac{a_{j}k'+b_{j}}{(1+a_{j}s')}
\]
which is true if and only if
\[
k'(a_{i}-a_{j})=b_{j}(1+a_{i}s')-b_{i}(1+a_{j}s').
\]
If $a_{i}-a_{j}\neq 0$, the equation above is satisfied for a unique value of $k'$. If $a_{i}=a_{j}$, the equation above become $(b_{j}-b_{i})(1+a_{i}s')=0$ which would implies $b_{j}-b_{i}=0$ since every factor appearing in the decomposition of $G_{s',k'}(X)$ satisfies $1+a_{i}s'\neq 0$. On the other hand, since we are assuming that $F(X,Y)$ is square-free, if two factors
\[
X+a_{i}Y+b_{i},\quad X+a_{j}Y+b_{j},
\]
are such that $a_{i}=a_{j}$, then necessarily one has $b_{i}-b_{j}\neq 0$. Hence, $G_{s',k'}(X)$ is square free for all but $O_{n}(1)$ values of $k'$ as claimed.

\end{proof}
\begin{proof}[Proof of Lemma $\ref{lem : exp}$.]
    Since $F_{\bfz}=N_{K}(X_{1}+X_{2}\alpha_{2}+z)$, with $z=z_{3}\alpha_{3}+\cdots+ z_{n}\alpha_{n}$, it follows that
    \[
    F_{\bfz}(X_{1},X_{2})=\prod_{\sigma\in\Emb(K/\mathbb{Q})}(X_{1}+X_{2}\sigma(\alpha_{2})+\sigma(z)).
    \]
    Now let $F_{\bfz}=H_{\bfz}^{2}G_{\bfz}$ with $H_{\bfz},G_{\bfz}\in \mathbb{Q}[X,Y]$, monic and $G_{\bfz}$ square-free. Since
    \[
\sum_{x_{1},x_{2}\in\mathbb{F}_{p}}\left(\frac{F_{\bfz}(x_{1},x_{2})}{p}\right)e_{p}(sx_{1}+tx_{2})=\sum_{x_{1},x_{2}\in\mathbb{F}_{p}}\left(\frac{G_{\bfz}(x_{1},x_{2})}{p}\right)e_{p}(sx_{1}+tx_{2})+ O_{n}(p),
    \]
    it is enough to show that $G_{\bfz}(X_{1},X_{2})$ satisfies the hypothesis of Lemma $\ref{lem : expnotshift}$. Since we are assuming $F_{\bfz}\mod p$ is not a square, it follows that
    \[
    G_{\bfz}(X_{1},X_{2})=\prod_{\sigma\in\mathcal{E}}(X_{1}+X_{2}\sigma(\alpha_{2})+\sigma(z)),
    \]
    for some $\emptyset\neq \mathcal{E}\subset\Emb(K/\mathbb{Q})$. Let $\sigma\in \mathcal{E}$. Since $\sigma(\alpha_{2})\not\in\mathbb{Q}$, we can find $\tau\in\Emb(K/\mathbb{Q})$ such that $\tau(\sigma(\alpha_{2}))\neq\sigma(\alpha_{2})$. On the other hand,
    \[
    \tau((X_{1}+X_{2}\sigma(\alpha_{2})+\sigma(z))|\tau(G_{\bfz})=G_{\bfz}.
    \]
    Hence, $G_{\bfz}$ satisfies the hypothesis of lemma $\ref{lem : expnotshift}$, since it has two linear factors
    \[
    X_{1}+X_{2}\sigma(\alpha_{2})+\sigma(z),\quad X_{1}+X_{2}\tau\circ\sigma(\alpha_{2})+\tau\circ\sigma(z),
    \]
    such that $\sigma(\alpha_{2})\neq\tau\circ\sigma (\alpha_{2})$.
\end{proof}

\subsection{Proof of Proposition $\ref{prop : Anumber}$}
This section is devoted to bounding the exponential sum
\[
\sum_{(x,y,z)\in\mathbb{F}_{p}^{3}}\left(\frac{F(x,y)}{p}\right)\left(\frac{F(z,y+r)}{p}\right)e_{p}(s(x-z)+ty)
\]
for a given square-free polynomial $F(X,Y)$. We begin by establishing the following general estimate:
\begin{lem}
Let $F\in\mathbb{F}_{p}[X,Y]$ be a square-free polynomial with $\deg_{X}F\geq 1$. Then
\[
\left|\sum_{(x,y,z)\in\mathbb{F}_{p}^{3}}\left(\frac{F(x,y)}{p}\right)\left(\frac{F(z,y+r)}{p}\right)e_{p}(s(x-z)+ty)\right|\ll_{\deg F}  p^{2}
\]
\label{lem : exptrivial}
\end{lem}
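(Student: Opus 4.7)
The plan is to factor the triple sum by fixing $y$, so that the sums over $x$ and $z$ become independent one-variable character sums that can each be handled via Lemma \ref{lem : exp1dim}. Concretely, I would rewrite the sum as
\[
\sum_{y\in\mathbb{F}_{p}} e_{p}(ty)\, S_{1}(y)\, S_{2}(y),
\]
where
\[
S_{1}(y)=\sum_{x\in\mathbb{F}_{p}}\left(\frac{F(x,y)}{p}\right) e_{p}(sx),\qquad S_{2}(y)=\sum_{z\in\mathbb{F}_{p}}\left(\frac{F(z,y+r)}{p}\right) e_{p}(-sz),
\]
so that only the phase $e_p(ty)$ couples the inner sums through the outer summation over $y$.

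Next, I would apply Lemma \ref{lem : exp1dim} to each of $S_{1}$ and $S_{2}$. Since $F(X,Y)$ is square-free and $\deg_{X}F\geq 1$, the lemma gives $|S_{1}(y)|\ll_{\deg F} p^{1/2}$ for all $y$ outside an exceptional set $E_{1}\subset\mathbb{F}_{p}$ of size $O_{\deg F}(1)$, and analogously $|S_{2}(y)|\ll_{\deg F} p^{1/2}$ for all $y$ outside an exceptional set $E_{2}$ of size $O_{\deg F}(1)$ (indeed $E_{2}$ is a shift of $E_{1}$ by $-r$, so the bound on its cardinality is inherited). Note that the hypothesis $\deg_X F \geq 1$ is exactly what is needed to ensure that $F(X,y)$ is a non-constant polynomial in $X$ for generic $y$, so that Lemma \ref{lem : exp1dim} is applicable.

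For the $p - O_{\deg F}(1)$ values of $y$ lying outside $E_{1}\cup E_{2}$, the contribution to the sum is at most
\[
\sum_{y\notin E_1\cup E_2} |S_1(y)|\cdot |S_2(y)| \ll_{\deg F} p\cdot p^{1/2}\cdot p^{1/2} = p^{2},
\]
while for the $O_{\deg F}(1)$ exceptional values $y\in E_{1}\cup E_{2}$, the trivial bound $|S_{i}(y)|\leq p$ delivers an additional contribution of size $O_{\deg F}(p^{2})$. Combining these yields the desired bound. The argument is essentially mechanical, with no real obstacle; the only point requiring attention is simply to verify that the exceptional set for $S_2(y)$, which involves $F(X, y+r)$, has the same $O_{\deg F}(1)$ cardinality as the one for $S_1(y)$, which follows trivially by translation invariance.
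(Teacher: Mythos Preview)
Your proposal is correct and follows essentially the same approach as the paper's proof: factor the sum over $y$, apply Lemma~\ref{lem : exp1dim} to the two resulting one-variable sums, and treat the $O_{\deg F}(1)$ exceptional values of $y$ with the trivial bound. The only cosmetic difference is that you track two exceptional sets $E_1,E_2$ separately while the paper works with a single exceptional set $S$.
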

\begin{proof}
    We start by rewriting
    \[
    \begin{split}
        \sum_{(x,y,z)\in\mathbb{F}_{p}^{3}}&\left(\frac{F(x,y)}{p}\right)\left(\frac{F(z,y+r)}{p}\right)e_{p}(s(x-z)+ty)\\&=\sum_{y\in\mathbb{F}_{p}}e_{p}(ty)\left( \sum_{x\in\mathbb{F}_{p}}\left(\frac{F(x,y)}{p}\right)e_{p}(sx)\right)\left( \sum_{z\in\mathbb{F}_{p}}\left(\frac{F(z,y+r)}{p}\right)e_{p}(-sz)\right).
    \end{split}
    \]
    Thanks to Lemma $\ref{lem : exp1dim}$, there exists $S\subset\mathbb{F}_{p}$, of cardinality $\#S=O_{\deg F}(1)$ and such that
    \[
    \left|\sum_{x\in\mathbb{F}_{p}}\left(\frac{F(x,y)}{p}\right)e_{p}(sx)\right|\ll_{\deg F} \sqrt{p},\quad \left|\sum_{z\in\mathbb{F}_{p}}\left(\frac{F(z,y+r)}{p}\right)e_{p}(-sz)\right|\ll_{\deg F} \sqrt{p},
    \]
    for every $y\not\in S$. Then
    \[
      \begin{split}
        \sum_{(x,y,z)\in\mathbb{F}_{p}^{3}}&\left(\frac{F(x,y)}{p}\right)\left(\frac{F(z,y+r)}{p}\right)e_{p}(s(x-z)+ty)\\&=\sum_{y\in\mathbb{F}_{p}\setminus S}e_{p}(ty)\left( \sum_{x\in\mathbb{F}_{p}}\left(\frac{F(x,y)}{p}\right)e_{p}(sx)\right)\left( \sum_{z\in\mathbb{F}_{p}}\left(\frac{F(z,y+r)}{p}\right)e_{p}(-sz)\right)\\&\quad +\sum_{y\in S}e_{p}(ty)\left( \sum_{x\in\mathbb{F}_{p}}\left(\frac{F(x,y)}{p}\right)e_{p}(sx)\right)\left( \sum_{z\in\mathbb{F}_{p}}\left(\frac{F(z,y+r)}{p}\right)e_{p}(-sx)\right)\\&\ll_{\deg F}p^{2}.
    \end{split}
    \]

\end{proof}

\begin{lem}
Let $F[X,Y]\in\mathbb{F}_{q}[X,Y]$ be a square-free polynomial such that
\[
F[X,Y]=\prod_{i=1}^{n}(X+a_{i}Y+b_{i})\text{ over }\overline{\mathbb{F}}_{q}.
\]
Moreover, assume that the following conditions hold:
\begin{itemize}
\item[$i)$] There exists $i\in\{2,...,n\}$ such that $a_{1}\neq a_{i}$.
\item[$ii)$] $\frac{a_{i}-a_{1}}{a_{j}-a_{1}}\cdot(b_{j}-b_{1})\neq b_{i}-b_{1}$ for any $i,j$ such that $a_{j}\neq a_{1}$ and $i\neq j$.
\end{itemize}
Then the sheaf $\mathcal{H}_{F,\chi}$ does not have any irreducible component that is geometrically trivial for every multiplicative character $\chi$ of $\mathbb{F}_{p}^{\times}$, i.e.
\[
\left|\sum_{(x,y)\in\mathbb{F}_{p}^{2}}\chi (F(x,y))\right|\ll_{n} p,
\]
for every multiplicative character $\chi$ of $\mathbb{F}_{p}^{\times}$.
\label{lem : multchar}
\end{lem}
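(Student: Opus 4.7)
The plan is to first reduce the claim to showing that $\mathcal{H}_{F,\chi}$ has no geometrically trivial irreducible component, and then prove this under hypotheses (i)--(ii). By Example \ref{es : 2vartrace}, the sheaf $\mathcal{H}_{F,\chi}$ is a middle-extension sheaf on $\mathbb{A}^{1}_{Y}$ pure of weight $1$ with $c(\mathcal{H}_{F,\chi})\ll_{n} 1$, whose trace function satisfies $t_{\mathcal{H}_{F,\chi}}(y)=\sum_{x\in\mathbb{F}_{p}}\chi(F(x,y))+O_{n}(1)$. The Grothendieck--Lefschetz trace formula gives
\[
\sum_{x,y\in\mathbb{F}_{p}}\chi(F(x,y))=-\Tr(\Fr\mid H^{1}_{c}(\mathbb{A}^{1}_{\overline{\mathbb{F}_{p}}},\mathcal{H}_{F,\chi}))+\Tr(\Fr\mid H^{2}_{c}(\mathbb{A}^{1}_{\overline{\mathbb{F}_{p}}},\mathcal{H}_{F,\chi}))+O_{n}(p).
\]
Deligne's Riemann hypothesis, combined with the bounded conductor, controls the $H^{1}_{c}$-contribution by $\ll_{n} p$. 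Since $\mathcal{H}_{F,\chi}$ is a middle extension, the $H^{2}_{c}$-term equals the Tate twist of the $\pi_{1}^{\mathrm{geom}}(\mathbb{A}^{1})$-coinvariants on the generic stalk, and so vanishes precisely when $\mathcal{H}_{F,\chi}$ has no geometrically trivial irreducible component. Both assertions of the Lemma are then consequences of the absence of such a component.

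To establish this absence, I first apply the linear change of variable $X\mapsto X-a_{1}Y-b_{1}$. Under this substitution $\mathcal{H}_{F,\chi}$ is replaced by its tensor product with a rank-one Kummer sheaf on $\mathbb{A}^{1}_{Y}$, which has no geometrically trivial summand and hence preserves the property at issue. In the new coordinates
\[
F(X,Y)=X\prod_{i=2}^{n}(X+c_{i}Y+d_{i}),\qquad c_{i}=a_{i}-a_{1},\ d_{i}=b_{i}-b_{1}.
\]
Hypothesis (i) says that some $c_{j}\neq 0$; hypothesis (ii) reads $c_{i}d_{j}\neq c_{j}d_{i}$ for distinct $i,j$ with $c_{j}\neq 0$, meaning that the fixed root $X=0$ and each moving root $X=-c_{i}Y-d_{i}$ never coincide, and that no two distinct moving roots pass through the origin of $\mathbb{A}^{2}$ at the same value of $Y$.

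The main obstacle is the geometric monodromy analysis of $\mathcal{H}_{F,\chi}=R^{1}\pi_{!}\mathcal{L}_{\chi(F)}$, where $\pi:\mathbb{A}^{2}\to\mathbb{A}^{1}_{Y}$ is projection onto $Y$. A geometrically trivial component corresponds to a nonzero vector in the geometric generic stalk fixed by all local inertia actions at the singular points of $\mathcal{H}_{F,\chi}$ in $\mathbb{P}^{1}_{Y}$. There are only $O_{n}(1)$ finite singularities, each of which is a value $y_{0}$ where exactly two of the roots $\{0,-c_{2}Y-d_{2},\dots,-c_{n}Y-d_{n}\}$ collide---that only two collide at a time is where condition (ii) enters crucially---and at each such point the local inertia acts as a Kummer pseudoreflection of finite order dividing $|\chi|$, stabilizing a codimension-one hyperplane and acting on its quotient line via $\chi$ applied to the leading local coefficient along the vanishing cycle. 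The local monodromy at infinity is controlled by the slopes $c_{i}$ of the moving roots, and is non-trivial thanks to (i). The plan is then to invoke a Goursat/Katz-style argument: the vanishing cycles produced by the finite collisions, together with the ramification at infinity, span the generic fibre, so the intersection of their fixed hyperplanes is zero. I expect the most delicate step to be the bookkeeping of vanishing cycles at collisions that involve the fixed root $X=0$, where condition (ii) must be applied in its sharpest form to ensure that the associated pseudoreflections are linearly independent from those produced by collisions among the moving roots alone.
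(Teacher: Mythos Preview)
Your reduction via Grothendieck--Lefschetz and Deligne is fine, and so is passing to coordinates with $a_1=b_1=0$ (though note this is simply a fiberwise translation in $X$, so it leaves $\mathcal{H}_{F,\chi}$ \emph{unchanged} rather than tensoring it with a Kummer sheaf). The real gap is in the monodromy analysis you sketch. Condition (ii), which in your normalized coordinates reads $c_i d_j\neq c_j d_i$ for $i\neq j$ with $c_j\neq0$, only says that the moving lines $X=-c_iY-d_i$ with $c_i\neq0$ meet the fixed line $X=0$ at pairwise distinct values of $Y$; it does \emph{not} prevent three or more moving lines from meeting at a common point away from $X=0$. So your ``only two roots collide at a time'' assertion is not a consequence of the hypotheses, and a full Picard--Lefschetz argument would have to handle higher-order collisions and then actually carry out the spanning/Goursat step you only announce. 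This is feasible in principle but considerably more delicate than you suggest.

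The paper avoids direct monodromy computation entirely. After reducing (as you do) to bounding $\sum_{x,y}\chi_{mr}(F(x,y))$ over suitable extensions $\mathbb{F}_{q^{mr}}$, it makes the substitution $z=x+a_1y$ (not a fiberwise translation), which factors $F$ as $g(z)\,G(z,y)$ with $g(z)=\prod_{a_i=a_1}(z+b_i)$ and $G(z,y)=\prod_{a_i\neq a_1}(z+(a_i-a_1)y+b_i)$. The double sum becomes the correlation $\sum_z t_{\mathcal{L}_{\chi(g)}}(z)\,t_{\mathcal{H}_{G,\chi}}(z)$ of two one-variable trace functions over $\mathbb{A}^{1}_{Z}$. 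Condition (ii) is then used only once and locally: it says exactly that $G(-b_1,Y)$ is square-free, so $\mathcal{H}_{G,\chi}$ is lisse at $z=-b_1$, a ramification point of the irreducible Kummer sheaf $\mathcal{L}_{\chi(g)}$. Hence $\mathcal{L}_{\chi(g)}$ cannot occur as a summand of $\mathcal{H}_{G,\chi}$, and quasi-orthogonality of trace functions yields the $O_n(q^{mr})$ bound. This route is shorter and uses the hypotheses far more economically than the global pseudoreflection argument you propose.
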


\begin{proof}
Let $\mathcal{H}_{F,\chi}=\bigoplus_{j}\mathcal{F}_{j}$ be the decomposition of $\mathcal{H}_{F,\chi}$ into irreducible components. Define $J=\{j: \mathcal{F}_{j}\cong_{\text{geom}}\mathbb{Q}_{z}\}$. For any $r\geq 1$, we have
\[
\sum_{x\in\mathbb{F}_{q^{r}}}t_{\mathcal{H}_{F,\chi},r}(x)= q^{\frac{3r}{2}}\sum_{j\in J}\omega_{j}^{r}+O(q^{r}),
\]
where $|\omega_{j}|=1$ for every $j\in J$, thanks to Deligne's resolution of the Riemann hypothesis over finite fields (\cite[Theorem $5.2$]{FKMS19}). On the other hand, for every $m>0$, we have
\[
\begin{split}
\frac{1}{R}\sum_{r\leq R}\left|\frac{1}{q^{\frac{3mr}{2}}}\sum_{x\in\mathbb{F}_{q^{mr}}}t_{\mathcal{H}_{F,\chi},mr}(x)\right|^{2}&=\frac{1}{R}\sum_{r\leq R}\left|\sum_{j\in J}w_{j}^{mr}\right|^{2}+\frac{1}{q^{\frac{mr}{2}}R}
\end{split}
\]
Since $w_{j}$ are all root of unity we get
\[
\begin{split}
\lim_{R\rightarrow\infty}\frac{1}{R}\sum_{r\leq R}\left|\sum_{j\in J}w_{j}^{mr}\right|^{2}\geq \# J.
\end{split}
\]
which leads to
\begin{equation}
\lim_{R\rightarrow \infty}\frac{1}{R}\sum_{r\leq R}\left|\frac{1}{q^{\frac{3mr}{2}}}\sum_{x\in\mathbb{F}_{q^{mr}}}t_{\mathcal{H}_{F,\chi},mr}(x)\right|^{2}\geq \#J.
\label{eq : m}
\end{equation}
We claim that we can find $m$ such that for every $r\geq 1$,
\[
\sum_{x\in\mathbb{F}_{q^{mr}}}t_{\mathcal{H}_{F,\chi},mr}(x)=O_{n}(q^{mr}).
\]
Assuming the claim, $(\ref{eq : m})$ implies that $\#J=0$, as stated in the Lemma. Hence, it rests to prove the claim. First, observe that, for every $m\geq 1$
\[
t_{\mathcal{H}_{F,\chi},m}(x)=\sum_{y\in\mathbb{F}_{q^{m}}}\chi_{m}(F(x,y))+O(1)\quad\text{ for all } x\in\mathbb{F}_{q^{m}}\setminus (\mathrm{Sing}(\mathcal{H}_{F,\chi})\cap \mathbb{F}_{q^{m}}),
\]
So it is enough to prove that for a suitable $m\geq 1$ and for every $r\geq 1$,
\[
\sum_{x,y\in\mathbb{F}_{q^{mr}}}\chi_{mr}(F(x,y))=O_{n}(q^{mr}).
\]
Let $m$ be an integer such that $F[X,Y]$ totally splits over $\mathbb{F}_{q^{m}}$. Then for every $r\geq 1$ we have
\[
F(X,Y)=\prod_{i=1}^{n}(X+a_{i}Y+b_{i}),\text{ over $\mathbb{F}_{q^{mr}}$}.
\]
Hence
\[
\sum_{x,y\in\mathbb{F}_{q^{mr}}}\chi_{mr}(F(x,y))=\sum_{x,y\in\mathbb{F}_{q^{mr}}}\prod_{i}^{n}\chi_{mr}(x+a_{i}y+b_{i}).
\]
Using the change of variables $z=x+a_{1}y$, we can rewrite
\[
F(x,y)=g(z)G(z,y),
\]
where
\[
g(z)=\prod_{i:a_{i}=a_{1}}(z+b_{i}),\quad G(z,y)=\prod_{i:a_{i}\neq a_{1},}(z+(a_{i}-a_{1})y+b_{i}).
\]
Since we are assuming that 
\begin{itemize}
    \item[$\bullet$] the polynomial 
    \[
F[X,Y]=\prod_{i=1}^{n}(X+a_{i}Y+b_{i})\text{ over }\overline{\mathbb{F}}_{q},
    \]
    is square free, and hence whenever $a_{i}=a_{j}$ for $i\neq j$, one has $b_{i}\neq b_{j}$,
    \item[$\bullet$] $a_{i}\neq a_{1}$ for some $i\in\{2,...,n\}$,
\end{itemize}
it follows that $G(Z,Y)$ is a non-constant square-free polynomial. Thus, we can rewrite the sum above as
\[
\sum_{z,y\in\mathbb{F}_{q^{mr}}}\chi_{mr}(g(z))\chi_{mr}(G(z,y))=\sum_{z\in\mathbb{F}_{q^{mr}}}t_{\mathcal{L}_{\chi_{mr}(g(Z))}}(z)t_{\mathcal{H}_{G,\chi}}(z),
\]
Where:
\begin{itemize}
    \item[$\bullet$] $\mathcal{L}_{\chi_{mr}(g(Z))}$ is the Kummer sheaf over $\overline{\mathbb{A}}_{\mathbb{F}_{q^{mr}}^{1}}$ associated with the function $\chi_{mr}(g(z))$, with singularities $\mathrm{Sing}(\mathcal{L}_{\chi_{mr}(g(Z))})=\{\infty\}\cup\{-b_{i}:i\text{ such that }a_{i}=a_{1}\}$.
    \item[$\bullet$] the sheaf $\mathcal{H}_{G,\chi_{mr}}$ is the one in Example $\ref{es : 2vartrace}$ associated to the polynomial $G(Y,Z)$. In particular, $\sing(\mathcal{H}_{G,\chi_{mr}})\cap\overline{\mathbb{A}}_{\mathbb{F}_{q}}^{1}(\overline{\mathbb{F}}_{q})=\{z\in\overline{\mathbb{A}}_{\mathbb{F}_{q}}^{1}(\overline{\mathbb{F}}_{q}): G(z,Y)\text{ is not square-free}\}$.
\end{itemize}
We claim that the sheaf $\mathcal{H}_{G,\chi_{mr}}$ is lisse at $-b_{1}$. To do this, in is enough to show that the polynomial $G(-b_{1},Y)$ is square-free: for distinct $i,j$ such that $a_{j}\neq a_{1}$, the factors
\[
(a_{i}-a_{1})Y+(b_{i}-b_{1}),\quad (a_{j}-a_{1})Y+(b_{j}-b_{1})
\]
are proportional only if
\[
\frac{a_{i}-a_{1}}{a_{j}-a_{1}}\cdot(b_{j}-b_{1})\neq b_{i}-b_{1},
\]
but this is not possible thanks to hypothesis $(ii)$. Since $-b_{1}\in\mathrm{Sing}(\mathcal{L}_{\chi_{mr}(g(Z))})$ but $-b_{1}\not\in \sing(\mathcal{H}_{G,\chi_{mr}})$ and $\mathcal{L}_{\chi_{mr}(g(Z))}$ is irreducible, the sheaf $\mathcal{L}_{\chi_{mr}(g(Z))}$ does not appear in the decomposition $\mathcal{H}_{F,\chi}=\bigoplus_{j}\mathcal{F}_{j}$.  Then, the semi-orthogonality of trace functions (\cite[Theorem $5.2$]{FKMS19}) implies that
\[
\sum_{x,y\in\mathbb{F}_{q^{mr}}}\chi_{mr}(F(x,y))=\sum_{z\in\mathbb{F}_{q^{mr}}}t_{\mathcal{L}_{\chi_{mr}(g(Z))}}(z)t_{\mathcal{H}_{G,\chi_{mr}}}(z)\ll_{n} q^{mr},
\]
as claimed.
\end{proof}

We are finally ready to prove:

\begin{proposition*}
Let $F(X,Y)\in\mathbb{F}_{p}[X,Y]$ a square free polynomial such that
\[
F[X,Y]=\prod_{i=1}^{n}(X+a_{i}Y+b_{i})\text{ over }\overline{\mathbb{F}}_{p},
\]
and assume that the following conditions hold:
\begin{itemize}
\item[$i)$] There exists $i\in\{2,...,n\}$ such that $a_{1}\neq a_{i}$.
\item[$ii)$] $\frac{a_{i}-a_{1}}{a_{j}-a_{1}}\cdot(b_{j}-b_{1})\neq b_{i}-b_{1}$ for any $i,j$ such that $a_{j}\neq a_{1}$ and $i\neq j$.
\item[$iii)$] For every $i,j$ with $i\neq j$, $b_{i}\neq b_{j}$.
\end{itemize}

Moreover, assume that $a_{i}\neq 0$ for every $i=1,...,n$. Let $s\in\mathbb{F}_{p}$, then there exists $A_{s}\subset\{(r,t)\in\mathbb{F}_{p}\times\mathbb{F}_{p}\}$ such that
\begin{itemize}
\item[$i)$] for any $(r,t)\in (\mathbb{F}_{p}\times\mathbb{F}_{p})\setminus A_{s}$ and , one has 
\[
\left|\sum_{(x,y,z)\in\mathbb{F}_{p}^{3}}\left(\frac{F(x,y)}{p}\right)\left(\frac{F(z,y+r)}{p}\right)e_{p}(s(x-z)+ty)\right|\ll_{n} p^{3/2}(s,t,p)^{1/2
}.
\]
\item[$ii)$] $\# A_{s}=O_{n}(1)$.
\end{itemize}
\end{proposition*}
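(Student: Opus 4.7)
The plan is to apply Hooley's theorem (Theorem \ref{thm : hooley}) in combination with the sheaf-theoretic argument underlying Lemma \ref{lem : multchar}. First I would linearize the product of Legendre symbols via the identity $\chi(a)=\#\{w:w^2=a\}-1$. The cross terms contribute $O_n(p^2)$ by Weil's bound (Lemma \ref{lem : exp1dim}) and Lemma \ref{lem : exptrivial}, while the leading contribution is
\[
\sum_{(x,y,z,w_1,w_2)\in V(\mathbb{F}_p)} e_p\bigl(s(x-z)+ty\bigr),
\]
where $V\subset\mathbb{A}^5$ is the $3$-dimensional affine variety cut out by $w_1^2=F(x,y)$ and $w_2^2=F(z,y+r)$. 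Applying Theorem \ref{thm : hooley} to the linear form $L=s(x-z)+ty$ reduces the claimed bound $p^{3/2}(s,t,p)^{1/2}$ to the variance estimate
\[
\sum_{\tau\in\mathbb{F}_{p^r}}\bigl(N_r(\tau)-p^{2r}\bigr)^{2}\ll_n p^{3r}(s,t,p),
\]
where $N_r(\tau)=\#\bigl(V(\mathbb{F}_{p^r})\cap L^{-1}(\tau)\bigr)$.

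To establish the variance bound, I would expand the square: the inner count is the number of pairs $(V_1,V_2)\in V(\mathbb{F}_{p^r})^2$ with $L(V_1)=L(V_2)$, a naive dimension count giving $5$. After the linear change of variables $X\mapsto X+a_1Y+b_1$ used in the proof of Lemma \ref{lem : expnotshift}, the pair variety fibered over the common $y$-coordinate decomposes into slices whose point counts are governed, via Example \ref{es : 2vartrace}, by the trace functions of a middle-extension $\ell$-adic sheaf built from $F(X,Y)F(Z,Y+r)$. The argument of Lemma \ref{lem : multchar} then shows that, outside a polynomial exceptional locus in $(r,t)$, this sheaf admits no geometrically trivial irreducible component; Deligne's Riemann Hypothesis delivers the required bound $\ll_n p^{3r}$. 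The factor $(s,t,p)$ absorbs the degenerate case $p\mid s$ and $p\mid t$, where $L\equiv 0$ and one uses the trivial $O(p^2)$ bound directly.

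The main obstacle, which pins down the exceptional set $A_s$, is verifying that no geometrically trivial component appears in the sheaf decomposition. Such a component can arise only when two linear factors of $F(X,Y)F(Z,Y+r)$ become proportional after the coordinate change, or when the twist by the Artin--Schreier sheaf $\mathcal{L}_{e_p(tY)}$ cancels a non-trivial subsheaf. These obstructions translate into a finite family of polynomial equations in $(r,t)$ whose degrees are bounded in terms of $n$: hypothesis (i) gives a factor with $a_i\neq a_1$ available for the alignment, while hypotheses (ii) and (iii) ensure that the resulting conditions do not vanish identically on $(r,t)$. Therefore $A_s$ is cut out by nontrivial polynomial conditions, and a B\'ezout-type argument gives $\#A_s=O_n(1)$ uniformly in $s$, completing the plan.
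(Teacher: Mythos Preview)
Your high-level strategy is correct and matches the paper: rewrite the sum as an exponential sum over an affine variety and apply Hooley's theorem, with Lemma~\ref{lem : multchar} supplying the needed control on the fibre counts. Where your sketch goes wrong is in the \emph{execution} of the variance step.

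First, the paper does not introduce two auxiliary variables $w_1,w_2$; by multiplicativity of the Legendre symbol it writes the sum directly as $\sum e_p(s(x-z)+ty)$ over the single equation $w^{2}=F(x,y)F(z,y+r)$. This is cosmetic, but it simplifies what follows.

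The substantive gap is your treatment of $\sum_{\tau}(N_r(\tau)-p^{2r})^{2}$. You propose to ``expand the square'' and analyse the pair variety $\{(V_1,V_2)\in V^2: L(V_1)=L(V_2)\}$, then speak of fibring it ``over the common $y$-coordinate''. But points $V_1,V_2$ in this pair variety do \emph{not} share a $y$-coordinate; they share only an $L$-value. Pushing the pair-counting approach through would require matching the main term $p^{5r}$ against lower-order terms coming from $\#V$ to precision $O(p^{3r})$, which is considerably more delicate than what is needed. The paper avoids this entirely by computing $N_m(k)$ \emph{directly} for each $k$: when $s\neq 0$ one solves the linear constraint for $z=x+\bar s(ty-k)$, so that
\[
N_m(k)=\#\{(w,x,y):w^{2}=G_k(x,y)\}=p^{2m}+\sum_{x,y}\chi_m\bigl(G_k(x,y)\bigr),
\]
where $G_k(X,Y):=F(X,Y)\,F\bigl(X+\bar s(tY-k),Y+r\bigr)$ is a polynomial in \emph{two} variables (not the three-variable $F(X,Y)F(Z,Y+r)$ you mention). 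One then checks two things: (a) for $(r,t)\notin A_s$, the polynomial $G_k$ is never a perfect square for \emph{any} $k$, giving $N_m(k)=p^{2m}+O(p^{3m/2})$ by Lang--Weil; and (b) $G_k$ satisfies the hypotheses of Lemma~\ref{lem : multchar} for all but $O_n(1)$ values of $k$, upgrading this to $N_m(k)=p^{2m}+O(p^{m})$. Step~(a) is exactly what pins down $A_s$ explicitly: pairing off linear factors forces $t=s(a_1-a_i)$ and $r=(2b_1-b_i-b_j)/(a_1-a_j)$, a finite list. Your final paragraph gestures at this but does not isolate the correct two-variable polynomial or the two-tier structure (a)/(b).

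Finally, the case $s=0$, $t\neq 0$ is not covered by your outline; here the linear constraint fixes $y$ outright, and one bounds $N_m(k)$ by factoring the character sum over $x$ and $z$ separately, using hypothesis~(iii) to ensure $F(X,y_0)$ is never a square.
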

\begin{proof}
The case $s=t=0$ is a special case of Lemma $\ref{lem : exptrivial}$. Hence, we may assume $(s,t)\neq (0,0)$. We start by proving the case when $s\neq 0$. We begin by noticing that:
\[
\sum_{x,y,z\in\mathbb{F}_{p}^{3}}\left(\frac{F(x,y)}{p}\right)\left(\frac{F(z,y+r)}{p}\right)e_{p}(s(x-z)+ty)=\sum_{\substack{w,x,y,z\in\mathbb{F}_{p}^{4}\\w^{2}=F(x,y)F(z,y+r)}}e_{p}(s(x-z)+ty).
\]
Then, thanks to Theorem $\ref{thm : hooley}$, it is enough to estimate
\[
\begin{split}
N_{m}(k)&=|\{(w,x,y,z)\in\mathbb{F}_{p^{m}}: w^{2}=F(x,y)F(z,y+r),\text{ } s(x-z)+ty=k \}|\\
&=|\{(w,x,y)\in\mathbb{F}_{p^{m}}: w^{2}=F(x,z)F(x+\overline{s}(ty-k),y+r)\}|,
\end{split}
\]
for any $m\geq 1$ and any $k\in\mathbb{F}_{p^{m}}$.
First observe that over $\overline{\mathbb{F}}_{p}$ we can decompose $F(X,Y)F(X+\overline{s}(tY-k),Y+r)$ as
\begin{equation}
\begin{split}
F(X,Y)F(X+\overline{s}(tY-k),Y+r)& =\prod_{i=1}^{n}(X+a_{i}Y+b_{i})\prod_{i=1}^{n}\left(X+(a_{i}+\overline{t}s)Y+b_{i}+a_{i}r-\overline{t}k\right).
\end{split}
\label{eq : pol}
\end{equation}
 We start by showing that there exists $A_{s}\subset\{(r,t)\in\mathbb{F}_{p}^{2}\}$ such that $\# A_{s}=O_{n}(1)$ and such that for any $(r,t)\not\in A_{s}$ the polynomial $F(X,Y)F(X+\overline{s}(tY-k),Y+r)$ is not a square for any $k$. The polynomial $F(X,Y)$ is square-free by hypothesis. The polynomial $F(X+\overline{s}(tY-k),Y+r)$ is also square-free since it is obtained by composing $F(X,Y)$ with a linear function.  From the decomposition in $(\ref{eq : pol})$ it follows that that if $F(X,Y)F(X+\overline{s}(tY-k),Y+r)$ is a square, then there exists a permutation $\alpha\in S_{n}\setminus\{\Id\}$ such that for every $i=1,...,n$ the factors
 \[
 X+a_{i}Y+b_{i},\quad X+(a_{\alpha(i)}+\overline{s}t)Y+b_{\alpha(i)}+a_{\alpha(i)}r-\overline{s}k,
 \]
 are proportional. From this we deduce that for every $i=1,...,n$ one has
 \[
 \begin{cases}
     s(a_{i}-a_{\alpha(i)})-t=0\\
     s(b_{i}-b_{\alpha(i)}-a_{\alpha(i)}r)=k.
 \end{cases}
 \]
Now let $i=\alpha_{1}$ and $j$ such that $\alpha(j)=1$. Then $s$ and $t$ are the solutions of the system
\[
 \begin{cases}
     s(a_{1}-a_{i})-t=0\\
     s(b_{1}-b_{i}-a_{i}r)=k\\
     s(a_{j}-a_{1})-t=0\\
     s(b_{j}-b_{1}-a_{1}r)=k.
 \end{cases}
 \]
 Hence, from the system above, we deduce that
 
 \[
 \begin{cases}
 t=s(a_{1}-a_{i}),\\
 r=\frac{2b_{1}-b_{i}-b_{j}}{a_{1}-a_{j}}
 \end{cases}
 \]
Now, if we define
\[
A_{s}=\left\{(t,r)\in(\mathbb{F}_{p}^{\times})^{2}: t=s(a_{1}-a_{i}),\quad r=\frac{2b_{1}-b_{i}-b_{j}}{a_{1}-a_{j}}\text{ for any }i,j=1,....,n\text{ s.t. }a_{i}\neq a_{1} \right\},
\]
it follows that for any $(t,r)\not\in A_{s}$ and any $k\in\mathbb{F}_{p^{m}}$ the polynomial $W^{2}= F(X,Y)F(X+\overline{s}(tY-k),Y+r)$ is irreducible for any $k$. Hence, using the Lang-Weil bound, we get that
\[
N_{m}(k)=p^{2m}+O_{n}(p^{3m/2}),
\]
for any $k\in\mathbb{F}_{p^{m}}$, provided that $(t,r)\not\in A_{s}$. From now on, we always assume that $(t,r)\not\in A_{s}$. Moreover, to lighten the notation, we denote by $s'=\overline{s}t$, $k'=\overline{s}k$
The next step is to show that the polynomial $F(X,Y)F(X+s'Y-k',Y+r)$ satisfies the hypothesis of Lemma \ref{lem : multchar} for all but $O_{n}(1)$ choices of $k$. Notice that the polynomials $F(X,Y)$ and $F(X+s'Y-k',Y+r)$
both satisfy the hypothesis of Lemma $\ref{lem : multchar}$: the polynomial $F(X,Y)$ by hypothesis, and $F(X+s'Y-k',Y+r)$ because is obtained by $F(X,Y)$ using a change of variables. \newline
We start by showing that the polynomial in $(\ref{eq : pol})$ is square-free for all but $O_n(1)$ many $k'$: since both $F(X,Y)$ and $F(X+s'Y-k',Y+r)$ are square-free, we need only check that for all but $O_n(1)$ values of $k'$ and for every $i,j$, the polynomials
\begin{equation}
 X+a_{i}Y+b_{i},\quad X+(a_{j}+s')Y+b_{j}+a_{j}r-k',
\label{eq : condsq}
\end{equation}
are not proportional, which is guaranteed if $k'\neq b_{i}-b_{j}+a_{j}r$.\newline 
Next, we verify that the polynomial $F(X,Y)F(X+s'Y-k',Y+r)$ also satisfies the second condition in Lemma \ref{lem : multchar} for all but $O_n(1)$ many $k'$. Since $F(X,Y)$ and $F(X+s'Y-k',Y+r)$ already satisfy this condition individually by hypothesis, it remains to ensure that any pair of linear factors coming from different components of the product still satisfy condition $(ii)$ of Lemma \ref{lem : multchar}. That is, we must check that given two such factors,
\[
 X+a_{i}Y+b_{i},\quad X+(a_{j}+s')Y+b_{j}+a_{j}r-k',
\]
with $a_{i}\neq a_{1}$ and $a_{j}+s'\neq a_{1}$, they satisfy
\begin{equation}
\frac{b_{j}+a_{j}r-k'-b_{1}}{a_{j}+s'-a_{1}}=\frac{b_{i}-b_{1}}{a_{i}-a_{1}},
\label{eq : condition(ii)}
\end{equation}
for all but $O_{n}(1)$ values of $k'$: since we are assuming $a_{1}\neq a_{i}$, equation $(\ref{eq : condition(ii)})$ can be satisfied by only one value of $k'$. Therefore, for all but $O_n(1)$ many $k'$, the polynomial $F(X,Y)F(X+s'Y-k',Y+r)$ satisfies both conditions of Lemma \ref{lem : multchar}.

Hence, if we denote by $\chi_{m}$ the unique character of order $2$ over $\mathbb{F}_{p^{m}}$, it follows from Lemma $\ref{lem : multchar}$ that
\[
N_{m}(k)=p^{2m}+\sum_{x,y}\chi_{m}(F(x,y)F(x+s'y-k',y+r))=p^{2m}+O_{n}(p^{m}),
\]
for all but $O_n(1)$ values of $k$. Thus, we get the result by applying Theorem $\ref{thm : hooley}$.\newline We now deal with the case $s=0$, $t\neq 0$. In this case, the exponential sum becomes
\[
\sum_{x,y,z\in\mathbb{F}_{p}^{3}}\left(\frac{F(x,y)}{p}\right)\left(\frac{F(z,y+r)}{p}\right)e_{p}(ty)=\sum_{\substack{w,x,y,z\in\mathbb{F}_{p}^{4}\\w^{2}=F(x,y)F(z,y+r)}}e_{p}(ty).
\]
With the goal of using Hooley's Theorem, for any $m\geq 1$ and any $k\in\mathbb{F}_{p^{m}}$ we have to count
\[
\begin{split}
N_{m}(k)&=|\{(w,x,y,z)\in\mathbb{F}_{p^{m}}: w^{2}=F(x,y)F(z,y+r),\text{ } sy=k \}|\\
&=|\{(w,x,y)\in\mathbb{F}_{p^{m}}: w^{2}=F(x,\overline{t}k)F(z,\overline{t}k+r)\}|\\&
=p^{2m}+\sum_{(x,z)\in\mathbb{F}_{p^{m}}^{2}}\chi_{m}(F(x,\overline{t}k)F(z,\overline{t}k+r))\\&= p^{2m}+\left(\sum_{x\in\mathbb{F}_{p^{m}}}\chi(F(X,\overline{t}k)\right) \left(\sum_{z\in\mathbb{F}_{p^{m}}}\chi_{m}(F(z,\overline{t}k+r))\right).
\end{split}
\]
We claim that for every value of $k\neq 0$ the polynomials $F(X,\overline{t}k)$ and $F(Z,\overline{t}k+r)$ are not squares. Let us first show how from the claim we can conclude the proof of the proposition: Since $F(X,\overline{t}k)$ and $F(Z,\overline{s}k+r)$ are not square for every $k$, we have (\cite[Theorem $11.23$]{IK04}) 
\[
\left|\sum_{x\in\mathbb{F}_{p^{m}}}\chi(F(X,\overline{t}k)\right|\ll_{n} p^{\frac{m}{2}},\quad \left|\sum_{z\in\mathbb{F}_{p^{m}}}\chi_{m}(F(z,\overline{t}k+r))\right|\ll_{n} p^{\frac{m}{2}}
\]
Hence,
\[
N_{m}(k)=p^{2m}+O_{n}(p^{m}),
\]
for every $k$. Hence, applying Theorem $\ref{thm : hooley}$, we get
\[
\left|\sum_{x,y,z\in\mathbb{F}_{p}^{3}}\left(\frac{F(x,y)}{p}\right)\left(\frac{F(z,y+r)}{p}\right)e_{p}(ty)\right|\ll_{n} p.
\]
It rests to prove the claim. To do this it is enough to show that $F(X,\overline{t}k)$ is not a square for every value of $k$.
Since 
\[
F(X,\overline{t}k)=\prod_{i=1}^{n}(X+\overline{t}ka_{i}+b_{i})\text{ over }\overline{\mathbb{F}}_{p}
\]
the polynomial $F(X,\overline{t}k)$ is a square if and only if there exists $\alpha\in S_{n}\setminus\{\Id\}$ such that for every $i,...,n$ the factors
\[
X+\overline{t}ka_{i}+b_{i},\quad X+\overline{t}ka_{\alpha(i)}+b_{\alpha(i)}\text{ are equal for every }i\in\{1,...,n\} ,
\]
that is if and only if
\begin{equation}
k(a_{i}-a_{\alpha(i)})=t(b_{\alpha(i)}-b_{i})\text{ for every }i\in\{1,...,n\} ,
\label{eq : prop}
\end{equation}
Now we distinguish two cases:
\begin{enumerate}
    \item[$a)$] the case $k\neq 0$. By contradiction, assume that $(\ref{eq : prop})$ holds. First observe that if such $\alpha$ exists, then $a_{\alpha(i)}\neq a_{i}$ for all $i$: indeed, if $a_{\alpha(i)}=a_{i}$ for some $i\in\{1,...,n\}$, then $(\ref{eq : prop})$ is satisfied only if $b_{i}=b_{\alpha_(i)}$ which is not possible in virtue of assumption $(iii)$. Now let $i,j$ such that $\alpha(i)=1$ and $\alpha (1)=j$. Then since we are assuming that $(\ref{eq : prop})$ holds, we would have
    \[
k=t\frac{b_{i}-b_{1}}{a_{i}-a_{1}}=t\frac{b_{j}-b_{1}}{a_{j}-a_{1}},
\]
which would be in contradiction with assumption $(ii)$. 
\item[$b)$] the case $k=0$: $(\ref{eq : prop})$ is true if and only if 
\[ 
b_{i}=b_{\alpha(i)}\text{ for all i },
\]
but this is not possible since $\alpha\neq \Id$ and our assumption $(iii)$.
\end{enumerate}

\end{proof}

\section*{Acknowledgments}

This project began while the author was affiliated with University of Basel, and it originated as a joint project with Pierre Le Boudec. Although P. Le Boudec has decided to moved away from academia and preferred not to be listed as a co-author, the author is deeply grateful for his contribution to the paper. The author would also like to thank Julian Demeio for helpful discussions, which led to a simplification of Lemma $\ref{lem : subspace}$. Thanks are due as well to Tim Browning and Lillian Pierce for their valuable comments on an earlier version of the paper, which helped improve the exposition of the material.

\bibliographystyle{alpha}

\end{document}